\newtheorem{thm}{Theorem}[section]
\newtheorem{prop}[thm]{Proposition}
\newtheorem{lem}[thm]{Lemma}
\newtheorem{cor}[thm]{Corollary}
\numberwithin{equation}{section}
\newcommand{\N}{\mathbb{N}}
\newcommand{\C}{\mathbb{C}}
\newcommand{\R}{\mathbb{R}}
\DeclareMathOperator{\im}{Im}
\DeclareMathOperator{\re}{Re}
\DeclareMathOperator{\dist}{dist}
\DeclareMathOperator{\diam}{diam}
\DeclareMathOperator{\HD}{HD}
\DeclareMathOperator{\erior}{Int}
\newcommand{\sms}{\setminus}
\newcommand{\leeq}{\leqslant}
\newcommand{\greq}{\geqslant}
\newcommand{\ve}{\varepsilon}
\newcommand{\la}{\lambda}
\newcommand{\mc}{\mathcal}
\newcommand{\vp}{\varphi}
\newcommand{\de}{\delta}
\newcommand{\dt}{\divideontimes}
\begin{document}

\title[On the directional derivative of the Hausdorff dimension]{On the directional derivative of the Hausdorff dimension of quadratic polynomial Julia sets at -2}

\author{Ludwik Jaksztas}

\address{Faculty of Mathematics and Information Science, Warsaw University of Technology, ul. Koszykowa 75, 00-662 Warsaw, Poland, ORCID: 0000-0002-9283-8841}
\email{jaksztas@impan.pl}
\thanks{The author was partially supported by National Science Centre Grant 2019/33/B/ST1/00275 (Poland)}
\subjclass[2020]{Primary 37F44, Secondary 37F35}
\keywords{Hausdorff dimension, Julia sets, quadratic family}

\begin{abstract}
Let $d(\delta)$ denote the Hausdorff dimension of the Julia set of the polynomial $f_\delta(z)=z^2-2+\delta$.

In this paper we will study the directional derivative of the function $d$ along directions landing at the parameter $0$, which corresponds to $-2$ in the case of family $p_c(z)=z^2+c$. We will consider all directions, except the one $\delta\in\R^+$, which is inside the Mandelbrot set.

We will prove asymptotic formula for the directional derivative of $d$. Moreover, we will see that the derivative is negative for all directions in the closed left half-plane. Computer calculations show that it is negative except a cone (with opening angle approximately $74^\circ$) around $\R^+$.
\end{abstract}

\maketitle

%\tableofcontents

\section{Introduction}\label{sec:introd}

Let $f$ be a polynomial in one complex variable of degree at least $2$. The \emph{filled-in Julia set} $\mc K_f$ we define as the set of all points that do not escape to infinity under iteration of $f$, i.e.
   $$\mc K_f=\{z\in\C:f^n(z)\nrightarrow\infty\}.$$
It is a compact set whose boundary is called the \emph{Julia set}. So, let us write
$$\mc J_f:=\partial\mc K_f.$$
Instead of the classical quadratic family $p_c(z)=z^2+c$, we will consider for technical reasons
$$f_\delta(z)=z^2-2+\delta,$$
i.e. $\de=c+2$. We will use the following abbreviations:
$$\mc J_\de:=\mc J_{f_\de},  \quad\quad \mc K_\de:=\mc K_{f_\de}.$$
Let $d(\de):=\HD(\mc J_\de)$ denote the Hausdorff dimension of the Julia set.
In this paper, we will deal with the function
$$\de\mapsto d(\de).$$

We define the Mandelbrot set (for the family $f_\de$) as follows
$$\mc M:=\{\delta\in\C:f_\delta^n(0)\nrightarrow\infty\}.$$

Recall that a polynomial $f:\overline\C\rightarrow\overline\C$ is called
\emph{hyperbolic (expanding)} if there exists $n\greq1$ such that $|(f^n)'(z)|>1$ for every $z\in\mc J_f$.

The function $d(\de)$ is real-analytic on each hyperbolic component of $\erior\mc M$ (consisting of parameters
related to hyperbolic maps) as well as on the exterior of $\mc M$ (see \cite{R}).
For $\alpha\in[0,2\pi)$, let us write
$$\mc R(\alpha):=\{z\in\C^*:\alpha=\arg z\}.$$
We will study properties of the function $d(\de)$ when the parameter $\de\notin \mc M$ tends to $0\in\partial\mc M$ along the ray $\mc R(\alpha)$. So, we will consider all directions except $\mc R(0)$ which is inside $\mc M$.

Parameter $\de=0$ corresponds to $c=-2$, so this is Misiurewicz's parameter. Note that a parameter is called Misiurewicz's if the critical point is strictly preperiodic (that is preperiodic but not periodic). In our casse $0\mapsto-2\mapsto2$, where $2$ is a fixed point. Moreover the Julia set $\mc J_{0}$ is equal to the interval $[-2,2]$ (see for example \cite{CG}).

J. Rivera-Letelier  proved in \cite{R} the following:\vspace{2.5mm}
\newline
\textbf{Theorem R-L.} \emph{There exists $C_0>0$ such that if $\de_n\rightarrow0$ and}
$$\re(\de_n)\leeq0\quad\textrm{or}\quad|\im(\de_n)|>C_0|\re(\de_n)|^{3/2},$$
\emph{then $\de_n\notin\mc M$ and $d(\de_n)\rightarrow d(0)=1$.}
\vspace{2mm}

An easy consequence of the above Theorem is that $d(\de)$ converges to $1$ along any ray $\mc R(\alpha)$, $\alpha\in(0,2\pi)$.

In \cite{FJW}, A. Fan, Y. Jiang and J. Wu, gave estimate of $d$ restricted to the real line. They
proved that: \vspace{2.5mm}
\newline
\textbf{Theorem FJW.} \emph{There exists constants $K>0$ and $\de_1<0$ such that}
   $$1-K^{-1}\sqrt{|\de|}\leeq d(\de)\leeq1-K\sqrt{|\de|},$$
for all $\de_1\leeq\de<0$.
\vspace{2mm}

So, one can expect that derivative $d'(\de)$ tends to $\infty$ like $1/\sqrt{|\de|}$, when $\de\rightarrow0^-$. In this paper we prove much more general result.
In order to state our main theorem we need two definitions.

Let $F$ be a real function defined on a domain $U\in\C$. If $z\in U$ and $v\in\C^*$, then
   $$F'_v(z):=\lim_{h\rightarrow0}\frac{F(z+hv)-F(z)}{h},$$
if the limit exists.

For $\alpha\in(0,\pi]$ we write
\begin{equation}\label{eq:omega}
\Omega_{-2}(\alpha):=\frac{1}{\sqrt6\,\pi\log2}\bigg(\cos\alpha-\frac12\sqrt{\sin\alpha}\int_{\alpha}^\pi\sqrt{\sin x}\,dx\bigg).
\end{equation}
If $\alpha\in(\pi,2\pi)$ then we define $\Omega_{-2}(\alpha):=\Omega_{-2}(2\pi-\alpha)$.

\begin{figure}[!ht]
   \centering
        \includegraphics[scale=0.75]{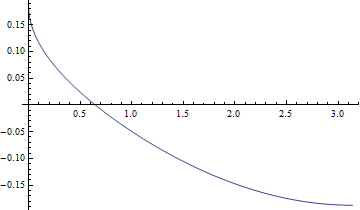}
        \caption{Graph of the function $\Omega_{-2}(\alpha)$ for $\alpha\in(0,\pi]$.}
\end{figure}

The main Theorem in this paper is:

\begin{thm}\label{thm:minus2}
For every $\alpha\in(0,2\pi)$ we have
\begin{equation*}
\lim_{\de\rightarrow0}\sqrt{|\de|}\cdot d_v'(\de)= \Omega_{-2}(\alpha),
\end{equation*}
where $\alpha=\arg\de$ and $v=e^{i\alpha}$.
\end{thm}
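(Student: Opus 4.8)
\emph{Outline of the proof.} The plan is to recast the statement in the language of the thermodynamic formalism and then to reduce it to a local analysis near the critical orbit $0\mapsto-2\mapsto2$ of $f_0$. Every ray $\mc R(\alpha)$, $\alpha\in(0,2\pi)$, eventually leaves $\mc M$: for $\re\de\leeq0$ this is Theorem R-L directly, while for $\re\de>0$ it follows from the same theorem because $|\im\de|/|\re\de|=|\tan\alpha|$ is constant along the ray whereas $|\re\de|^{3/2}\to0$. Hence, once $\de\in\mc R(\alpha)$ is close enough to $0$, the map $f_\de$ is expanding on its Julia set (a Cantor set contained in a small neighbourhood of $[-2,2]$), and Bowen's formula applies: $d(\de)$ is the unique zero of the pressure $P(t,\de):=P_{f_\de}(-t\log|f_\de'|)=\log\la(t,\de)$, $\la(t,\de)$ being the leading eigenvalue of the transfer operator $\mc L_{t,\de}\varphi(z)=\sum_{f_\de(w)=z}|f_\de'(w)|^{-t}\varphi(w)$ on a suitable Banach space. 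Differentiating $P(d(\de),\de)=0$ along $v=e^{i\alpha}$ yields
\[
d_v'(\de)=-\frac{\partial_v P(d(\de),\de)}{\partial_t P(d(\de),\de)},
\]
and by the symmetry $f_{\overline{\de}}(\overline{z})=\overline{f_\de(z)}$ we may assume $\alpha\in(0,\pi]$.

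First I would dispose of the denominator. For expanding $f_\de$ one has $\partial_t P(d(\de),\de)=-\chi(\de)$, where $\chi(\de)=\int\log|f_\de'|\,d\mu_\de$ is the Lyapunov exponent of the equilibrium state $\mu_\de$ of $-d(\de)\log|f_\de'|$. Theorem R-L gives $d(\de)\to1$; together with the fact that the part of $\mc J_\de$ near the critical orbit carries vanishing $\mu_\de$-mass, this yields weak convergence of $\mu_\de$ to the balanced measure $\mu_0=\tfrac{dx}{\pi\sqrt{4-x^2}}$ of the Chebyshev map $f_0$, whence
\[
\chi(\de)\longrightarrow\int_{-2}^2\log|2x|\,\frac{dx}{\pi\sqrt{4-x^2}}=\log2 .
\]
It therefore suffices to show $\sqrt{|\de|}\,\partial_v P(d(\de),\de)\to\log2\cdot\Omega_{-2}(\alpha)$, for then $\sqrt{|\de|}\,d_v'(\de)=\sqrt{|\de|}\,\partial_v P/\chi(\de)\to\Omega_{-2}(\alpha)$.

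The core is the asymptotics of $\partial_v P$. The eigenvalue perturbation formula gives $\partial_v P=\la^{-1}\langle h,\nu\rangle^{-1}\langle(\partial_v\mc L_{t,\de})h,\nu\rangle$ with $h,\nu$ the leading eigenfunction and eigenmeasure. Since $\partial_\de f_\de\equiv1$, each inverse branch $w=f_\de^{-1}(z)$ moves at rate $\partial_v w=-v/f_\de'(w)$ and $f_\de'(w)=2w$ vanishes only at $0$; hence $(\partial_v\mc L_{t,\de})h$ is bounded uniformly in $\de$ away from a shrinking neighbourhood of the critical value $v_\de=f_\de(0)=-2+\de$, and the singularity as $\de\to0$ is produced there. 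Near $v_\de$ the relevant branch is $w=\pm\sqrt{z-v_\de}$ and the dominant contribution to $(\partial_v\mc L_{t,\de})h$ is
\[
\partial_v\bigl(|f_\de'(w)|^{-t}\bigr)h(w)=\tfrac{t}{2}\,|2w|^{-t}\,\re\!\left(\frac{v}{w^{2}}\right)h(w).
\]
Rescaling by $w=\sqrt{\tfrac23|\de|}\,\eta$ --- the scale of the gap of $\mc J_\de$ around $0$, which the identities $f_\de(z)+2=z^2+\de$, $f_\de(-2+w)=2-4w+w^2$ and $\beta_\de=2-\tfrac13\de+\cdots$ fix through $f_\de^2(z)-\beta_\de=-\tfrac83\de-4z^2+\cdots$ --- the rescaled set $\mc J_\de\cap(\text{neighbourhood of }0)$ converges to the curve $\widehat C_\alpha=\{\eta:\eta^{2}+e^{i\alpha}\in\R_{\geqslant0}\}$ (which degenerates to $(-\infty,-1]\cup[1,\infty)$ for $\alpha=\pi$, in accordance with the real case). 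Carrying the conformal measure $\nu$ through the rescaling, one obtains
\[
\sqrt{|\de|}\,\partial_v P(d(\de),\de)\longrightarrow c_0\int_{\widehat C_\alpha}\re\!\left(\frac{e^{i\alpha}}{\eta^{2}}\right)|d\eta|
\]
for an explicit positive constant $c_0$, which the analysis near the $\beta$-fixed point evaluates to $\tfrac1{2\sqrt6\,\pi}$.

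Evaluating that integral is the clean heart of the matter. Parametrising $\widehat C_\alpha$ by $x\in[\alpha,\pi]$ via $\eta^{2}=\tfrac{\sin\alpha}{\sin x}\,e^{i(x-\pi)}$ gives $\re(e^{i\alpha}/\eta^{2})=-\tfrac{\sin x}{\sin\alpha}\cos(x-\alpha)$ and $|d\eta|=\tfrac{\sqrt{\sin\alpha}}{2(\sin x)^{3/2}}\,dx$, so, using $\cos(x-\alpha)=\cos x\cos\alpha+\sin x\sin\alpha$ and $\int\tfrac{\cos x}{\sqrt{\sin x}}\,dx=2\sqrt{\sin x}$, each of the two arcs of $\widehat C_\alpha$ contributes
\[
-\frac1{2\sqrt{\sin\alpha}}\int_\alpha^\pi\frac{\cos(x-\alpha)}{\sqrt{\sin x}}\,dx=\cos\alpha-\tfrac12\sqrt{\sin\alpha}\int_\alpha^\pi\sqrt{\sin x}\,dx=\sqrt6\,\pi\log2\cdot\Omega_{-2}(\alpha),
\]
so the integral above equals $2\sqrt6\,\pi\log2\cdot\Omega_{-2}(\alpha)$, whence $\sqrt{|\de|}\,\partial_v P\to\log2\cdot\Omega_{-2}(\alpha)$ and finally $\sqrt{|\de|}\,d_v'(\de)\to\Omega_{-2}(\alpha)$. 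The hard part will be everything underpinning this outline: (i) running the transfer-operator machinery with uniform control of the spectral data as $\de\to0$, although the spectral gap of $\mc L_{t,\de}$ collapses at $\de=0$ (which calls for an induced or accelerated map, or a function space adapted to the geometry near the critical orbit); (ii) uniform estimates showing that the part of $\langle(\partial_v\mc L_{t,\de})h,\nu\rangle$ coming from outside the shrinking neighbourhood of $v_\de$ is $o(|\de|^{-1/2})$; and (iii) the precise bookkeeping of the conformal measure near the $\beta$-fixed point that fixes $c_0$, so that the normalisation comes out exactly as stated.
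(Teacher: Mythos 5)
Your outline follows essentially the same strategy as the paper: Bowen's formula and implicit differentiation of the pressure, the Lyapunov-exponent limit $\chi(\de)\to\log2$, the rescaling of $\mc J_\de$ near $0$ by $\sqrt{|\de|}$ onto a limiting hyperbolic arc, and the explicit evaluation of the resulting integral, which you carry out correctly (your $\widehat C_\alpha$ is the paper's $H^{\dt}_{\alpha,R}$ as $R\to\infty$ after the further scaling by $\sqrt{3/2}$). The substantial technical work you flag in (i)--(iii) is exactly the content of Sections 3--9 of the paper, which however sidesteps spectral perturbation theory and induced or accelerated schemes by conjugating holomorphically to the fixed Cantor set $\mc J_{\de_0}$, writing $\dot\vp_\de$ as an explicit series over preimages (formula (\ref{eq:sumas})), and proving uniform bounds on the invariant density and on the tails of that series.
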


Because of symmetry, it is enough to prove the Theorem for $\alpha\in(0,\pi]$.
The function $\Omega_{-2}(\alpha)$ is obviously negative for $\alpha\in[\pi/2,\pi]$, whereas is positive for small $\alpha$.
Moreover it is easy to check that $\frac{\partial}{\partial\alpha}\Omega_{-2}(\alpha)<0$ for $\alpha\in(0,\pi/2)$. Therefore there exists a unique $\alpha_0\in(0,\pi/2)$ such that $\Omega_{-2}(\alpha_0)=0$. Thus we have $d'_v(\de)\rightarrow+\infty$ for every $\alpha\in(0,\alpha_0)$, and $d'_v(\de)\rightarrow-\infty$ for every $\alpha\in(\alpha_0,\pi]$.

A numerically made picture of the graph of $\Omega(\alpha)$ (see Figure 1.) suggests that it is a decreasing function on whole interval $(0,\pi)$, whereas $\alpha_0$ is slightly greater than $\pi/5$ (close to $37^\circ$).

In the real case (i.e. $\alpha=\pi$), we see that
$$\lim_{\de\rightarrow0}\sqrt{|\de|}\cdot d_{-1}'(\de)=\frac{-1}{\sqrt6\,\pi\log2}.$$
Note that this is the directional derivative, whereas the derivative of $d(\de)$ has opposite sign. So, integrating we obtain:
\begin{cor}
If $\de\in\R^-$, then for $|\de|$ small we have
$$d(\de)=1-\frac{\sqrt6}{3\,\pi\log2}\sqrt{|\de|}+o(\sqrt{|\de|}).$$
\end{cor}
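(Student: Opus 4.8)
The final statement is the Corollary, which we derive from the main Theorem~\ref{thm:minus2} in the special case $\alpha=\pi$.

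\medskip

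The plan is to integrate the asymptotic formula for the directional derivative along the negative real axis. First I would specialize Theorem~\ref{thm:minus2} to $\alpha=\pi$, $v=-1$. Evaluating \eqref{eq:omega} at $\alpha=\pi$, the integral term $\int_\pi^\pi\sqrt{\sin x}\,dx$ vanishes and $\cos\pi=-1$, so $\Omega_{-2}(\pi)=\frac{-1}{\sqrt6\,\pi\log2}$, giving
\[
\lim_{\de\to0}\sqrt{|\de|}\cdot d_{-1}'(\de)=\frac{-1}{\sqrt6\,\pi\log2}.
\]
Now for $\de\in\R^-$ one has, by definition of the directional derivative along $v=-1$, the relation $d_{-1}'(\de)=\lim_{h\to0}\frac{d(\de-h)-d(\de)}{h}=-\frac{d}{d\de}\,d(\de)$, i.e. the ordinary one-sided derivative of $d$ restricted to $(\de_1,0)$ equals $-d_{-1}'(\de)$. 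Hence $\frac{d}{d\de}d(\de)\sim\frac{1}{\sqrt6\,\pi\log2}\cdot\frac{1}{\sqrt{|\de|}}$ as $\de\to0^-$. Since $d$ is real-analytic on $(\de_1,0)$ (being in the exterior of $\mc M$ by Theorem~R-L, as $\re\de<0$), we may integrate this derivative over $[\de,0)$.

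\medskip

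Concretely, fix the antiderivative: for $\de<0$, $\frac{d}{d\de}\big(2\sqrt{|\de|}\big)=\frac{d}{d\de}\big(2\sqrt{-\de}\big)=-\frac{1}{\sqrt{-\de}}=-\frac{1}{\sqrt{|\de|}}$. Thus $\frac{d}{d\de}\big(d(\de)+\frac{2}{\sqrt6\,\pi\log2}\sqrt{|\de|}\big)=o\!\big(\tfrac{1}{\sqrt{|\de|}}\big)$ as $\de\to0^-$. Writing $g(\de):=d(\de)+\frac{2}{\sqrt6\,\pi\log2}\sqrt{|\de|}$, we have $g'(\de)=\eta(\de)/\sqrt{|\de|}$ with $\eta(\de)\to0$; integrating from $\de$ to some fixed small $\de_2<0$ and using that $\int_{\de}^{\de_2}|\eta(t)|/\sqrt{|t|}\,dt = o(\sqrt{|\de|})$ (split the integral: on $[\de_2',0)$ with $\de_2'$ chosen so that $|\eta|<\ve$ there, the contribution is $\le\ve\cdot 2\sqrt{|\de|}$; the rest is $O(1)=o(\sqrt{|\de|})$ absorbed after noting $d$ is continuous so $g(\de)\to g(0)=d(0)=1$), one concludes $g(\de)-1=o(\sqrt{|\de|})$, i.e.
\[
d(\de)=1-\frac{2}{\sqrt6\,\pi\log2}\sqrt{|\de|}+o\!\big(\sqrt{|\de|}\big)=1-\frac{\sqrt6}{3\,\pi\log2}\sqrt{|\de|}+o\!\big(\sqrt{|\de|}\big),
\]
using $2/\sqrt6=\sqrt6/3$. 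This is the claimed formula, and $d(0)=1$ follows from $\mc J_0=[-2,2]$ (or from Theorem~R-L).

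\medskip

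The only genuine subtlety—hardly an obstacle, but worth stating carefully—is the passage from the pointwise derivative asymptotic to the integrated asymptotic: one needs that the $o(1/\sqrt{|\de|})$ bound on $g'$ integrates to an $o(\sqrt{|\de|})$ bound on $g(\de)-g(0)$, which is exactly the standard "if $f'(t)=o(t^{-1/2})$ and $f$ extends continuously to $0$ then $f(\de)-f(0)=o(\de^{1/2})$" lemma, proved by the $\ve$-splitting just described. Everything else is direct substitution into \eqref{eq:omega} and the elementary identity $2/\sqrt6=\sqrt6/3$.
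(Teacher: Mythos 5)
Your proof is correct and follows exactly the paper's route: specialize Theorem~\ref{thm:minus2} to $\alpha=\pi$, observe $\Omega_{-2}(\pi)=-1/(\sqrt6\,\pi\log2)$, convert the directional derivative along $v=-1$ to the (negative of the) one-sided derivative, and integrate. The paper merely says ``integrating we obtain''; you have supplied the standard $\ve$-splitting argument that justifies passing from $g'(\de)=o(|\de|^{-1/2})$ to $g(\de)-g(0)=o(|\de|^{1/2})$, together with the continuity $d(\de)\to d(0)=1$, which is exactly what is needed.
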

\vspace{2mm}
\noindent
\textbf{Remark. Estimates along the ray $\mc R(0)$ (inside the Mandelbrot set).}

Recently N. Dobbs, J. Graczyk and N. Mihalache proved in \cite{DGM} the following significant result: %concerning positive real case (i.e. parameters from the Mandelbrot set)
\vspace{2.5mm}
\newline
\textbf{Theorem DGM.} \emph{There exists constants $\kappa_*>0$ and $\de_0>0$ such that for every $\de\in(0,\de_0]$}
   $$d(\de)\greq1+\kappa_*\sqrt{\de}.$$
%\vspace{0.5mm}

In that paper also an upper bound is proved, which holds on a "large" set of parameters.

The fact that we can pass with $\Omega_{-2}(\alpha)$ to the limit as $\alpha\rightarrow0^+$, that is
$$\lim_{\alpha\rightarrow0^+}\Omega_{-2}(\alpha)=\frac{1}{\sqrt6\,\pi\log2},$$
leads to the following:\vspace{2.5mm}
\newline
\textbf{Conjecture.}
\emph{The supremum over all constants $\kappa_*>0$ for which there exists $\de_0>0$ such that the statement of Theorem DGM holds, is equal to $\sqrt6/(3\,\pi\log2)$. In particular}
   $$d(\de)\greq1+\frac{\sqrt6}{3\,\pi\log2}\sqrt{\de}-o(\sqrt{\de}),$$
\emph{for small $\de\in\R^+$.}
\vspace{2mm}

Very recently A. Dudko, I. Gorbovickis and W. Tucker computed in \cite{DGT} rigorous lower bounds of $d(\de)$ for $\de\in[0,4]$ (in our parametrization). They estimated $d(\de)$ on some intervals and also on some additional parameters.
%As far as I know this is the only rigorous estimate of $d(\de)$ for $\de$ close to $0$.

Now, using lower bounds of $d(\de)$ for parameters (which are greater than estimates for intervals containing them) we make the following observation: For $\de_n\approx n\cdot0.004$, where $1\leeq n\leeq25$, precise values of estimates (which we received from the Authors) lead to
   $$d(\de_n)>1+0.362\sqrt{\de_n},$$
whereas $\sqrt6/(3\,\pi\log2)\approx0.375$.

\

This is the first paper concerning the derivative of the dimension function close to a non-parabolic parameter. On the other hand this is the second paper about the directional derivative after \cite{Jiii}, where the derivative close to the parameter $c=1/4$ was studied (parabolic parameter with one petal).

As in \cite{Jiii} we will us the formula for the derivative (quotient of two integrals, see Proposition \ref{prop:d}). The integral from the denominator is equal to the Lyapunov exponent (if the measure is normalized) and tends to a positive constant. The main problem is to estimate integral from the numerator.

Note that our situation is much different than in \cite{Jiii}. In our case a crucial role is played by a part of the Julia set that is close to $0$, and we will have to rescale the set $\mc J_\de$ in order to study a location of $\mc J_\de$ and behaviour of conformal and invariant measures in detail.

Notation: if $z\in\C\sms\R$, then $\sqrt{z}$ denotes square root with positive real part. If $z\in\R^-$ then we assume that $\sqrt{z}$ has positive imaginary part.

\section{Thermodynamic formalism}\label{sec:formalism}

The main goal of this section is to establish a formula for directional derivative of the Hausdorff dimension (see Proposition \ref{prop:d})
(cf. \cite[Section 2]{Jiii}).

Because of hyperbolicity, the Julia set moves holomorphically over every simply connected subset of $\C\sms \mc M$. In particular, we can take the set $\mc U$, which is connected component of $B(0,1/5)\sms \mc M$ containing $\de_0=-1/10$.

Thus, there exists family of injections $\vp_\de:\mc J_{\de_0}\rightarrow\hat\C$, $\de\in\mc U$ such that $\vp_{\de_0}=\textrm{id}_{\mc J_{\de_0}}$, $\vp_{\de}(\mc J_{\de_0})=\mc J_\de$ and
   $$\vp_\de\circ f_{\de_0}(s)=f_\de\circ \vp_\de(s),$$
for $s\in\mc J_{\de_0}$ (i.e. $\vp_\de$ conjugates $f_{\de_0}$ to $f_\de$). Moreover, the map $\de\mapsto\vp_\de(s)$ is holomorphic, whereas for every $\de\in\mc U$ the map $\vp_\de$ extends to a quasiconformal (so also H\"{o}lder) map of the sphere to itself (see \cite{Hub}).

Note that for $\de=0$, there exists function $\vp_0$ that semiconjugates $f_{\de_0}$ to $f_0$. A point $z$ has two preimages under $\vp_0$ iff $f_0^n(z)=0$ for some $n\greq0$ (see \cite[Theorem 1.6]{CK}).

Now we use the thermodynamic formalism, which holds for hyperbolic rational maps (see \cite{PU}).
Let $X=\mc J_{\de_0}$, $T=f_{\de_0}$, and let $\phi:X\rightarrow\R$ be a potential function of the form
$\phi=-\tau\log|f_\delta'(\vp_\delta)|$, for $\delta\in \mc U$ and $\tau\in\R$.

{\em The topological pressure} can be defined as follows:
   $$P(T,\phi):=\lim_{n\rightarrow\infty}\frac{1}{n}\log\sum_{\overline s\in{T^{-n}(s)}}e^{S_n(\phi(\overline s))},$$
where $S_n(\phi)=\sum_{k=0}^{n-1}\phi\circ T^k$. The limit exists and does not depend on $s\in\mc J_{\de_0}$ (see \cite[Corollary 12.5.13]{PU}).
If $\phi=-\tau\log|f_\delta'(\vp_\delta)|$ and $\vp_\delta(\overline s)=\overline z$, then $e^{S_n(\phi(\overline
s))}=|(f_\delta^n)'(\overline z)|^{-\tau}$, hence
   $$P(T,-\tau\log|f_\delta'(\vp_\delta)|)=\lim_{n\rightarrow\infty}\frac{1}{n}\log\sum_{\overline z\in{f_\delta^{-n}(z)}}|(f_\delta^n)'(\overline z)|^{-\tau}.$$
The function $\tau\mapsto P(T,-\tau\log|f_\delta'(\vp_\delta)|)$ is strictly decreasing from $+\infty$ to $-\infty$. So, there exists a
unique $\tau_0$ such that $P(T,-\tau_0\log|f_\delta'(\vp_\delta)|)=0$. By Bowen's Theorem (see \cite[Corollary 9.1.7]{PU} or
\cite[Theorem 5.12]{Z}) we obtain
  \begin{equation*}\label{eq:acm}
    \tau_0=d(\delta).%=\HD(\mc J_\delta).
  \end{equation*}
Thus, we have $P(T,-d(\delta)\log|f_\delta'(\vp_\delta)|)=0$. Write $\phi_\delta:=-d(\delta)\log|f_\delta'(\vp_\delta)|$.

{\em The Ruelle operator} or {\em the transfer operator} $\mathcal{L}_{\phi}:C^0(X)\rightarrow C^0(X)$, is defined as
  \begin{equation}\label{eq:L}
    \mathcal{L}_{\phi}(u)(s):=\sum_{\overline s\in{T^{-1}(s)}}u(\overline s)e^{\phi(\overline s)}.
  \end{equation}
The Perron-Frobenius-Ruelle theorem \cite[Theorem 4.1]{Z} asserts that $\beta=e^{P(T,\phi)}$ is a single
eigenvalue of $\mathcal{L}_{\phi}$ associated to an eigenfunction $\tilde h_{\phi}>0$. Moreover there exists a
unique probability measure $\tilde \omega_{\phi}$ such that $\mathcal{L}_{\phi}^*(\tilde
\omega_{\phi})=\beta\tilde\omega_{\phi}$, where $\mathcal{L}_{\phi}^*$ is dual to $\mathcal{L}_{\phi}$.

For $\phi=\phi_\delta$ we have $\beta=1$, and then $\tilde \mu_{\phi_\delta}:=\tilde
h_{\phi_\delta}\tilde\omega_{\phi_\delta}$ is a $T$-invariant measure called an equilibrium state (if it is normalized).
We denote by $\tilde{\omega_\delta}$ and $\tilde{\mu_\delta}$ the measures $\tilde\omega_{\phi_\delta}$ and
$\tilde\mu_{\phi_\delta}$ respectively (measures supported on $\mc J_{\de_0}$). Next, we take
$\mu_{\delta}:=(\vp_{\delta})_*\tilde{\mu_{\delta}}$, and $\omega_{\delta}:=(\vp_{\delta})_*\tilde{\omega_{\delta}}$ (measures supported on the Julia set $\mc J_\delta$).

So, the measure $\mu_\delta$ is $f_\delta$-invariant, whereas $\omega_\delta$ (after normalization) is called {\em $f_\delta$-conformal measure with exponent
$d(\delta)$}, i.e. $\omega_\delta$ is a Borel probability measure such that for every Borel subset $A\subset\mc J_\delta$,
  \begin{equation}\label{eq:SP}
    \omega_\delta(f_\delta(A))=\int_A|f_\delta'|^{d(\delta)}d\omega_\delta,
  \end{equation}
provided $f_\delta$ is injective on $A$.

However we will not assume that $\mu_\delta$ and $\omega_\delta$ are probability measures. Because $\mc J_0=[-2,2]$ we will take $\mu_\delta(\mc J_\de)=\omega_\delta(\mc J_\de)=4$.

It follows from \cite[Proposition 6.11]{Z} or \cite[Theorem 5.6.5]{PU} that for every H\"{o}lder continuous functions $\psi,\hat\psi:X\rightarrow\R$, at every $t\in\R$, we have
  \begin{equation*}\label{eq:SP1}
    \frac{\partial}{\partial t}P(T,\psi+t\hat\psi)=\int_X \hat\psi \:d\tilde{\mu}_{\psi+t\hat\psi}.
  \end{equation*}

Let us consider parameters of the form $\delta=tv$, where $v=e^{i\alpha}$, $\alpha=\arg\de$, and $t>0$.
%Let us consider parameters of the form $\delta=te^{i\alpha}=tv$, where $t>0$.
Since $\tau=d(tv)$ is the unique zero of the pressure function, for the potential $\phi=-\tau\log|f'_{tv}(\varphi_{tv})|$, the implicit function theorem combined with the above formula leads to (see \cite[Proposition 2.1]{HZi} or \cite[Proposition 2.1]{J}):

\begin{prop}\label{prop:d}
If $\alpha\in(0,2\pi)$ and $v=e^{i\alpha}$, then for every $t>0$ such that $tv\notin\mc M$ we have
\begin{equation}\label{eq:d}
d_v'(tv)=-d(tv)
\frac{\int_{\mc J_{\de_0}}\frac{\partial}{\partial
t}\log|f_{tv}'(\vp_{tv})|d\tilde\mu_{tv}}{\int_{\mc J_{\de_0}}\log|f_{tv}'(\vp_{tv})|d\tilde\mu_{tv}}.
\end{equation}
\end{prop}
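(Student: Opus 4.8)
The plan is to derive \eqref{eq:d} as a standard consequence of implicit differentiation of the pressure equation, following the template of \cite[Proposition 2.1]{HZi} and \cite[Proposition 2.1]{J}. Fix $\alpha\in(0,2\pi)$, $v=e^{i\alpha}$, and restrict attention to $t>0$ with $tv\notin\mc M$, so that $tv\in\mc U$ (shrinking $\mc U$ if necessary, or noting the argument is purely local in $t$ near any such point) and all the thermodynamic machinery of Section~\ref{sec:formalism} applies with $X=\mc J_{\de_0}$, $T=f_{\de_0}$.

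First I would introduce the two-variable pressure function
$$\mc P(t,\tau):=P\big(T,-\tau\log|f_{tv}'(\vp_{tv})|\big),$$
defined for $t$ in a neighbourhood of the fixed parameter and $\tau$ near $d(tv)$. The real-analyticity of $(t,\tau)\mapsto\mc P(t,\tau)$ follows from real-analyticity of the holomorphic motion $\de\mapsto\vp_\de$ restricted to $\de=tv$ together with the analytic dependence of the pressure on Hölder potentials (the potential $-\tau\log|f_{tv}'(\vp_{tv})|$ depends real-analytically on $(t,\tau)$ in the space of Hölder functions, since $f_{tv}'(z)=2z$ and $\vp_{tv}$ avoids $0$ on $\mc J_{\de_0}$ by hyperbolicity). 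By Bowen's theorem, recalled in the excerpt, $d(tv)$ is characterized as the unique zero of $\tau\mapsto\mc P(t,\tau)$, i.e. $\mc P(t,d(tv))\equiv0$.

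Next I would compute the two partial derivatives of $\mc P$ at $(t,d(tv))$ using the differentiation formula $\frac{\partial}{\partial s}P(T,\psi+s\hat\psi)=\int_X\hat\psi\,d\tilde\mu_{\psi+s\hat\psi}$ quoted from \cite[Proposition 6.11]{Z}. Writing $\psi_t:=\log|f_{tv}'(\vp_{tv})|$, the $\tau$-derivative is $\frac{\partial\mc P}{\partial\tau}=-\int_X\psi_t\,d\tilde\mu_{tv}$, which is strictly negative because $\psi_t>0$ on $\mc J_{\de_0}$ (hyperbolicity of $f_{\de_0}$ — more precisely, $\int\psi_t\,d\tilde\mu_{tv}$ is a positive Lyapunov exponent, hence nonzero), so the implicit function theorem applies. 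For the $t$-derivative, I would treat the perturbation $\tau\mapsto-\tau\log|f_{tv}'(\vp_{tv})|$ as a curve in Hölder space and differentiate in $t$ (chain rule in the Banach-space-valued setting), giving $\frac{\partial\mc P}{\partial t}=-d(tv)\int_X\frac{\partial}{\partial t}\log|f_{tv}'(\vp_{tv})|\,d\tilde\mu_{tv}$. Implicit differentiation of $\mc P(t,d(tv))=0$ then yields
$$d_v'(tv)=-\frac{\partial\mc P/\partial t}{\partial\mc P/\partial\tau}\Big|_{(t,d(tv))}=-d(tv)\,\frac{\int_{\mc J_{\de_0}}\frac{\partial}{\partial t}\log|f_{tv}'(\vp_{tv})|\,d\tilde\mu_{tv}}{\int_{\mc J_{\de_0}}\log|f_{tv}'(\vp_{tv})|\,d\tilde\mu_{tv}},$$
which is \eqref{eq:d}. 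Finally, I would note that the left-hand side of this identity is the genuine directional derivative $d_v'(tv)$ in the sense defined in the excerpt, because the implicit function theorem produces a real-analytic (hence differentiable) function $t\mapsto d(tv)$ and $d_v'(tv)=\frac{d}{dt}d(tv)$.

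The only subtle point — and the step I would take the most care over — is justifying the differentiation-under-the-integral / analyticity of $\mc P$ in the direction parameter $t$: one must know that $t\mapsto(-\tau\log|f_{tv}'(\vp_{tv})|)$ is a real-analytic (or at least $C^1$) curve in the space of Hölder functions on $\mc J_{\de_0}$ and that pressure depends real-analytically on Hölder potentials, so that the formula from \cite[Proposition 6.11]{Z}, stated there for a one-parameter affine family $\psi+t\hat\psi$, extends to a general smooth family via the chain rule. This is exactly the content invoked in \cite[Proposition 2.1]{HZi} and \cite[Proposition 2.1]{J}, so I would cite those and indicate that hyperbolicity guarantees $\vp_{tv}(\mc J_{\de_0})$ stays bounded away from the critical point $0$, keeping $\log|f_{tv}'(\vp_{tv})|$ uniformly Hölder and its $t$-derivative $\frac{\partial}{\partial t}\log|f_{tv}'(\vp_{tv})| = \re\big(v\,\frac{\partial_\de\vp_\de}{\vp_\de}\big)\big|_{\de=tv}$ well-defined and Hölder, as needed.
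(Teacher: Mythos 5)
Your proposal is correct and takes essentially the same approach as the paper: the paper derives \eqref{eq:d} by applying the implicit function theorem to the pressure equation $P(T,-\tau\log|f'_{tv}(\vp_{tv})|)=0$ together with the differentiation-of-pressure formula from \cite[Proposition 6.11]{Z}, citing \cite[Proposition 2.1]{HZi} and \cite[Proposition 2.1]{J} for the details you have spelled out.
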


\section{Conjugation close to the fixed point}\label{sec:conj}

Suppose $p$ is a repelling (or attracting) fixed point of a holomorphic function $f$.
It is well known fact that $f$ is conjugated to $z\mapsto f'(p)z$ in a neighborhood of $p$.
Now we recall how to construct the conjugation.

The polynomials $f_\de$, $\de\in\mc U$ have two repelling fixed points. We will consider
\begin{equation}\label{eq:1/27}
p_\delta:=\frac12+\frac32\sqrt{1-\frac49\delta}=2-\frac13\,\delta-\frac{1}{27}\,\delta^2+O(\delta^3),
\end{equation}
which becomes $p_0=2$ for $\de=0$.
So, the multiplier $\la_\de$ at $p_\de$ is equal to
$$\la_\de=f_\de'(p_\de)=2p_\de.$$

In order to construct the conjugation $\Phi_{\delta}$, we first "move" $p_\de$ to $0$. Put
$$\hat f_\de(z):=f_\de(z+p_\de)-p_\de.$$
Indeed, $0$ is the fixed point of $\hat f_\de$. We will write $\hat{\mc J_\de}:=\mc J_{\hat f_\de}$.

The sequence $\hat\Phi_{n,\delta}$ that converges to the function $\hat\Phi_{\delta}$, which conjugates $\hat f_\de$ to $z\mapsto \la_\de z$ (i.e. $\hat\Phi_{\delta}\circ \hat f_\de=\lambda_\de\hat\Phi_{\delta}$), we define as follows:
$$\hat\Phi_{n,\delta}(z):=\lambda_\delta^n\hat f^{-n}_\delta( z)=\lambda_\delta^n(f^{-n}_\delta(p_\delta+z)-p_\delta).$$
Note that
\begin{equation}\label{eq:Phi-1}
   \hat\Phi^{-1}_{n,\delta}(z)=\hat f^{n}_\delta\Big(\frac{ z}{\lambda_\delta^n}\Big)=f^{n}_\delta\Big(p_\delta+\frac{ z}{\lambda_\delta^n}\Big)-p_\delta.
\end{equation}
We have
$$\hat\Phi_{n,\delta}(z)\rightrightarrows\hat\Phi_{\delta}(z),$$
and the convergence is uniform independently of the parameter $\de\in B(0,r_{\vartriangle})$, where $ z\in B(0,r_{\textrm{z}})$, for some $r_\vartriangle>0$, $r_{\textrm{z}}>0$. Thus, $\hat\Phi_\de$ depends analytically on $\de\in B(0,r_\vartriangle)$ (see for example \cite[Chapter 2]{CG}).

Possibly changing $r_{\textrm{z}}>0$, we can assume that both functions $\hat\Phi_\de$ and $\hat\Phi^{-1}_\de$ are defined on $B(0,r_{\textrm{z}})$. Next, because $\hat\Phi_\de'(0)=1$, we can also assume that these functions are not too far from the identity map, namely
\begin{equation}\label{eq:PP}
\Big|\frac{\hat\Phi_\de(z)}{z}-1\Big|<\frac12,\quad\textrm{and}\quad\Big|\frac{\hat\Phi_\de^{-1}(z)}{z}-1\Big|<\frac12,
\end{equation}
for $z\in B(0,r_{\textrm{z}})$ and $\de\in B(0,r_{\vartriangle})$.

%Since $\hat\Phi_\de'(0)=1$ we can assume that
%\begin{equation*}
% \begin{array}{cc}
%   \hat\Phi_\de(B(0,r_{\textrm{z}}/2))\subset B(0,r_{\textrm{z}}),\\
%   \hat\Phi^{-1}_\de(B(0,r_{\textrm{z}}/2))\subset B(0,r_{\textrm{z}}),
% \end{array}
%\end{equation*}
%for $\de\in B(0,r_\vartriangle)$.

Finally, we take $\Phi_{\delta}(z):=\hat\Phi_{\delta}(z-p_\de)$, and then we have
\begin{equation}\label{eq:pfp}
\Phi_{\delta}\circ f_\de=\lambda_\de\Phi_{\delta},
\end{equation}
where the functions $\Phi_\de$ are defined on $B(p_\de, r_\textrm{z})$.
Possibly changing $r_\textrm{z}$ and $r_\vartriangle$, we can also assume that $\Phi_\de$ are defined on $B(2,r_\textrm{z})$ for $\de\in B(0,r_\vartriangle)$.

\section{The Julia sets $\mc J_\de$ for $\de$ close to 0}\label{sec:position}

\subsection{The Hausdorff metric}\label{sec:Hm}
If $X$ and $Y$ are two non-empty compact subsets of $\C$, then their Hausdorff distance is defined as follows:
   $$d_H(X,Y)=\max\Big(\sup_{x\in X}\dist(x,Y),\,\sup_{y\in Y}\dist(y,X)\Big).$$

Now we state an important Theorem from \cite[Section 5]{D}.
\begin{thm}\label{thm:dH}
If $\mc K_{\hat\de}$ has empty interior, that is $\mc K_{\hat\de}=\mc J_{\hat\de}$, then the function
$$\de\mapsto\mc J_{\de},$$
is continuous at $\hat\de$, as the function from $\C$ into the space of non-empty compact subsets of $\C$ equipped with the Hausdorff metric.
\end{thm}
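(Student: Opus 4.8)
\emph{Sketch of proof.} The plan is to split the Hausdorff convergence $\mc J_\de\to\mc J_{\hat\de}$ into its two halves, establish each as a \emph{general} semicontinuity statement valid at every parameter, and only then use the hypothesis $\mc K_{\hat\de}=\mc J_{\hat\de}$ to glue them. Concretely I would show: (i) $\de\mapsto\mc K_\de$ is upper semicontinuous, i.e. for each $\ve>0$ there is $\rho>0$ such that $|\de-\hat\de|<\rho$ forces $\sup_{y\in\mc K_\de}\dist(y,\mc K_{\hat\de})<\ve$; and (ii) $\de\mapsto\mc J_\de$ is lower semicontinuous, i.e. $\sup_{x\in\mc J_{\hat\de}}\dist(x,\mc J_\de)\to0$ as $\de\to\hat\de$. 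Granting both, since $\mc J_\de=\partial\mc K_\de\subseteq\mc K_\de$ and $\mc K_{\hat\de}=\mc J_{\hat\de}$, one gets $\sup_{y\in\mc J_\de}\dist(y,\mc J_{\hat\de})=\sup_{y\in\mc J_\de}\dist(y,\mc K_{\hat\de})\leq\sup_{y\in\mc K_\de}\dist(y,\mc K_{\hat\de})\to0$, and combined with (ii) this is precisely $d_H(\mc J_\de,\mc J_{\hat\de})\to0$.

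For (i) I would first fix a neighbourhood $\mc V$ of $\hat\de$ and produce a uniform escape radius $R=R(\mc V)$: from $|f_\de(z)|\greq|z|^2-(2+|\de|)$ one sees that, for $R$ depending only on $\sup_{\de\in\mc V}|\de|$, the inequality $|z|>R$ implies $|f_\de(z)|>|z|$ and $f_\de^n(z)\to\infty$, hence $\mc K_\de\subseteq\overline{B(0,R)}$ for all $\de\in\mc V$. Now suppose (i) fails: there are $\ve_0>0$, $\de_n\to\hat\de$, and $y_n\in\mc K_{\de_n}$ with $\dist(y_n,\mc K_{\hat\de})\greq\ve_0$. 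By the uniform bound $y_n\in\overline{B(0,R)}$, so along a subsequence $y_n\to y_*$ with $\dist(y_*,\mc K_{\hat\de})\greq\ve_0$; thus $y_*\notin\mc K_{\hat\de}$, so $|f_{\hat\de}^N(y_*)|>R$ for some $N$. Joint continuity of $(\de,z)\mapsto f_\de^N(z)$ then gives $|f_{\de_n}^N(y_n)|>R$ for large $n$, forcing $y_n\notin\mc K_{\de_n}$, a contradiction.

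For (ii) I would use that the repelling periodic points of $f_{\hat\de}$ are dense in $\mc J_{\hat\de}$ (Fatou). Given $\ve>0$, compactness of $\mc J_{\hat\de}$ yields finitely many repelling periodic points $z_1,\dots,z_m$ of $f_{\hat\de}$ whose $\ve/2$-balls cover $\mc J_{\hat\de}$. Each $z_j$, of period $k_j$, is a simple zero of $z\mapsto f_{\hat\de}^{k_j}(z)-z$ since $(f_{\hat\de}^{k_j})'(z_j)\neq1$, so the implicit function theorem continues it analytically to $z_j(\de)$ near $\hat\de$ with $z_j(\hat\de)=z_j$; by continuity of the multiplier these stay repelling, hence $z_j(\de)\in\mc J_\de$. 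Choosing $\rho>0$ so small that $|z_j(\de)-z_j|<\ve/2$ for all $j$ whenever $|\de-\hat\de|<\rho$, every $x\in\mc J_{\hat\de}$ lies within $\ve/2+\ve/2=\ve$ of some $z_j(\de)\in\mc J_\de$, which is (ii).

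The escape-radius estimate and the implicit-function bookkeeping are routine; the only genuinely nontrivial input is the density of repelling periodic points in $\mc J_{\hat\de}$, which I would quote rather than reprove. The conceptual crux — and the reason the hypothesis $\mc K_{\hat\de}=\mc J_{\hat\de}$ is indispensable — is that upper semicontinuity is naturally a statement about $\mc K$ while lower semicontinuity is a statement about $\mc J$, and the two can only be matched when $\mc K_{\hat\de}$ has empty interior; otherwise $\mc K_\de$ may remain large while $\mc J_\de$ collapses, and $\mc J$ genuinely jumps.
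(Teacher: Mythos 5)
Your proof is correct. The paper does not prove this theorem itself but cites it from Douady's \emph{Does a Julia set depend continuously on the polynomial?} (reference \cite{D}), and your argument reconstructs precisely the standard Douady-type proof: upper semicontinuity of $\de\mapsto\mc K_\de$ via a uniform escape radius and a compactness/contradiction argument, lower semicontinuity of $\de\mapsto\mc J_\de$ via persistence of repelling periodic orbits (using density of such orbits in the Julia set and the implicit function theorem), and the hypothesis $\mc K_{\hat\de}=\mc J_{\hat\de}$ to convert the $\mc K$-inclusion into the missing half of the Hausdorff estimate.

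One minor remark on the closing paragraph, which is expository rather than part of the proof: at a parabolic parameter the typical failure mode as $\de\to\hat\de$ from outside $\mc M$ is not that ``$\mc J_\de$ collapses'' but rather that $\mc J_\de$ stays large, accumulating on much of $\erior\mc K_{\hat\de}$, so $\sup_{y\in\mc J_\de}\dist(y,\mc J_{\hat\de})$ fails to tend to $0$ --- which is exactly the half of the Hausdorff distance your gluing step controls, confirming that this is where the hypothesis is indispensable.
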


\begin{cor}\label{cor:dH}
If $\de\rightarrow0$, then
$$\mc J_\de\rightarrow\mc J_0=[-2,2],$$
in the space of non-empty compact subsets of $\C$ equipped with the Hausdorff metric.
\end{cor}

\subsection{Conjugations and location of $\mc J_\de$}\label{sse:ce}
It is known fact that the function $x\mapsto \frac{2}{\pi}\arcsin x$
conjugates polynomial $g(x)=-2x^2+1$ defined on $[-1,1]$, to the tent map $T:[-1,1]\rightarrow[-1,1]$ defined by $T(x)=-2|x|+1$ (see for example \cite[Chapter II]{MS}).

Since $x\mapsto-x/2$ conjugates $f_0$ to $g$, let us write
$$\vp_v(x)=\frac4\pi\arcsin\Big(\frac x2\Big),\qquad\vp_v^{-1}(x)=2\sin\Big(\frac\pi4x\Big),$$
and next
$$
V(x)=\left\{ \begin{array}{ll}
2x-2 &\textrm{ if }\quad x\in[0,2],\\
-2x-2&\textrm{ if }\quad x\in[-2,0).
\end{array} \right.
$$
Thus, $\vp_v$ conjugates $f_0$ to $V$. Hence
$$f_0^n(x)=\vp_v^{-1}\circ V^n\circ \vp_v(x),\qquad x\in[-2,2],\; n\in\N.$$
Differentiating the above equality, and using the fact that $(\vp_v^{-1})'( V\circ \vp_v)=(\vp_v^{-1})'(\vp_v\circ f_0)$, we can get
\begin{equation}\label{eq:(fn)'}
|(f_0^n)'(x)|=2^n\sqrt{\frac{4-(f_0^n(x))^2}{4-x^2}}.
\end{equation}

%\begin{equation*}\label{eq:sumas1}
%\frac{d}{d\delta}z(\delta)=-\sum_{k=1}^{m}
%\frac{1}{(f_\delta^k)'(z(\delta))}+\frac{\frac{d}{d\delta}f_\delta^m(z(\delta))}{(f_\delta^m)'(z(\delta))}.
%\end{equation*}

\

The function $F(z)=z+\frac1z$ is a semiconjugation between $z\mapsto z^2$ and $f_0$. Image of the unit circle under $F$ is equal to the Julia set $\mc J_0=[-2,2]$. Let us consider the family of circles $S(0,r)$ where $r>1$. Then, the images $F(S(0,r))=:\xi_r$ are ellipses, which can be parameterized as follows:
$$\xi_r(t):re^{it}+\frac1re^{-it}.$$
For every $r>1$, the focuses are $\pm2$. Note that
$$f_0(\xi_r(t))=\xi_{r^2}(2t).$$
If $z=x+iy$, then we get
$$
\xi_r(t):\left\{ \begin{array}{ll}
x(t)=(r+1/r)\cos t,\\
y(t)=(r-1/r)\sin t.
\end{array} \right.
$$
Thus the semi-minor and semi-major axes are equal to $(r-1/r)$ and $(r+1/r)$ respectively.

Filled ellipses will be denoted by $E_r$, i.e.
$$E_r:\frac{x^2}{(r+1/r)^2}+\frac{y^2}{(r-1/r)^2}\leeq1.$$
Note that for $r\greq1$, we have
\begin{equation}\label{eq:rr}
    r+\frac1r\leeq2+(r-1)^2, \quad\textrm{and}\quad r-\frac1r\leeq 2(r-1).
\end{equation}
%for $r\greq1$.

Now we give some estimates concerning precise location of the sets $\mc J_\de$.
\begin{lem}\label{lem:sqrt|de|}
For every $\de\neq0$ we have
$$\mc J_\de\subset E_{1+\sqrt{|\de|}}\subset\big\{z\in\C:|\im z|\leeq 2\sqrt{|\de|}\big\}.$$
\end{lem}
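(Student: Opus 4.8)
The plan is to produce a dynamically invariant region containing $\mc J_\de$ and to check it sits inside the claimed ellipse. Recall that $\mc J_\de$ is contained in the closure of the escaping complement; more usefully, $\mc J_\de=\partial\mc K_\de$ and $\mc K_\de$ is the set of non-escaping points. So it suffices to show that every point outside $E_{1+\sqrt{|\de|}}$ escapes to $\infty$ under $f_\de$, equivalently that $f_\de(E_{1+\sqrt{|\de|}})\subset E_{(1+\sqrt{|\de|})^2}$ together with the fact that $E_{r}\subset E_{r'}$ for $r\leeq r'$ and that the filled ellipses shrink back: $E_{1+\sqrt{|\de|}}\subset\erior E_{(1+\sqrt{|\de|})^2}$... wait, that inclusion goes the wrong way, so instead I would argue directly. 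First I would write $r:=1+\sqrt{|\de|}>1$ and show
$$f_0(E_r)=E_{r^2},$$
which follows from $f_0(\xi_\rho(t))=\xi_{\rho^2}(2t)$ for each $\rho\in(1,r]$ together with $f_0(0)=-2\in E_{r^2}$, so $f_0$ maps the nest of ellipses $\{E_\rho\}_{1\leeq\rho\leeq r}$ onto $\{E_{\rho^2}\}_{1\leeq\rho\leeq r}$.

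Next I would bring in the perturbation: $f_\de(z)=f_0(z)+\de$, so $f_\de(E_r)\subset f_0(E_r)+B(0,|\de|)=E_{r^2}+B(0,|\de|)$. The core estimate is then to show that $E_{r^2}+B(0,|\de|)\subset E_{r^2}$'s complement picture is wrong — rather I should pick the invariant ellipse to be large enough to absorb the $+\de$. So let me re-organize: fix $R=1+\sqrt{|\de|}$ and show $f_\de\big(\C\sms E_R\big)\subset\C\sms E_R$ and moreover that points in $\C\sms E_R$ escape. For this I would estimate, for $z\in\partial E_R$, the quantity $|f_\de(z)|$ or better the "ellipse coordinate" of $f_\de(z)$: using $z=\xi_R(t)$ one has $f_0(z)=\xi_{R^2}(2t)$, a point on the ellipse of parameter $R^2=1+2\sqrt{|\de|}+|\de|$, whose distance to the boundary $\partial E_R$ from outside is at least of order the gap between semi-axes, namely at least $c\cdot(R^2-R)\geeq c\sqrt{|\de|}\cdot$(something); I would then check $|\de|$ is small compared to that gap, say using the crude bounds \eqref{eq:rr}, so that adding $\de$ keeps us strictly outside $E_R$, and in fact strictly inside $E_{R^2}$ minus a definite neighborhood of $\partial E_R$, giving a strict expansion of the ellipse-radius coordinate and hence escape. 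Combining, $\C\sms E_R\subset\mc K_\de^c$, so $\mc K_\de\subset E_R$ and a fortiori $\mc J_\de\subset E_R=E_{1+\sqrt{|\de|}}$.

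For the second inclusion, $E_{1+\sqrt{|\de|}}\subset\{|\im z|\leeq 2\sqrt{|\de|}\}$, I would simply read off the semi-minor axis: by the parametrization $y(t)=(r-1/r)\sin t$ with $r=1+\sqrt{|\de|}$, the maximal imaginary part on $E_r$ is $r-1/r$, and by the second inequality in \eqref{eq:rr} we get $r-1/r\leeq 2(r-1)=2\sqrt{|\de|}$, which is exactly the claim.

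The main obstacle is the quantitative step in the middle paragraph: one must verify that the additive error $\de$ is genuinely dominated by the gap between the nested ellipses $\partial E_{R}$ and $\partial E_{R^2}$ at every boundary point, uniformly in $t$, including near the foci $\pm 2$ where the ellipses are close together. Near $t=0$ (i.e. near $+2$) the relevant transversal gap between $\xi_R$ and $\xi_{R^2}$ is of order $(R^2-R)\cdot R \sim \sqrt{|\de|}$ in the $x$-direction but the ellipses are most pinched there, so I would need to check that $f_0$ also moves points in the $t$-coordinate (doubling the angle) in a way that compensates, or simply enlarge $R$ by a harmless constant factor absorbed into "$1+\sqrt{|\de|}$" after noting we only need the statement for small $|\de|$ — though the lemma as stated has no smallness hypothesis, so care is needed and one may instead prove it by the direct computation $f_\de(E_r)\subset E_{r^2}$ bootstrapped through the identity $E_{r^2}+B(0,|\de|)\subset E_{(1+\sqrt{|\de|})^2}$ when $r=1+\sqrt{|\de|}$, which reduces everything to the elementary inequalities \eqref{eq:rr}.
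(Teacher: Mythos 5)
Your overall strategy (show that the exterior of $E_{1+\sqrt{|\de|}}$ is forward-invariant and escapes, then read off the semi-minor axis bound from \eqref{eq:rr}) is the same as the paper's, and your treatment of the second inclusion is correct. But you leave the one quantitative step that actually matters as an unresolved ``obstacle,'' and your stated worry rests on a miscalculation. You estimate the gap between $\xi_R$ and $\xi_{R^2}$ near $t=0$ as ``of order $(R^2-R)\cdot R\sim\sqrt{|\de|}$,'' but the actual gap there is the semi-major axis difference
$$\Big(R^2+\frac{1}{R^2}\Big)-\Big(R+\frac{1}{R}\Big)=(R-1)^2\Big(1+\frac1R+\frac{1}{R^2}\Big),$$
which for $R=1+\sqrt{|\de|}$ is $|\de|\,(1+1/R+1/R^2)>|\de|$, i.e.\ of order $|\de|$, not $\sqrt{|\de|}$. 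Moreover, for confocal ellipses this semi-major axis difference is the \emph{minimum} of the distance between $\xi_R$ and $\xi_{R^2}$ (the gap in the normal direction is smallest on the major axis, not largest), so there is no ``pinching'' problem near the foci at all: at every boundary point the image curve $f_0(\xi_R)=\xi_{R^2}$ lies at distance strictly greater than $|\de|$ from $E_R$, and hence $f_\de(\xi_R)=\xi_{R^2}+\de$ still encloses $E_R$. That single inequality, $(R-1)^2(1+1/R+1/R^2)>(R-1)^2=|\de|$, is the crux of the lemma and holds for every $\de\neq0$ (no smallness hypothesis needed), so your final worry about the lemma lacking a smallness assumption is also unfounded. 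With this computation supplied, the rest of your argument goes through and matches the paper's proof.
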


\begin{proof}
Let $r=1+\kappa\sqrt{|\de|}$, where $\kappa\greq1$.
It is easy to prove that distance between ellipses $\xi_r$ and $\xi_{r^2}$ is equal to semi-major axis difference, namely
$$\Big(r^2+\frac{1}{r^2}\Big)-\Big(r+\frac{1}{r}\Big)=\big(r-1\big)^2\Big(1+\frac{1}{r}+\frac{1}{r^2}\Big)>\big(r-1\big)^2=\kappa^2|\de|. $$
Because
$$f_\de(\xi_r(t))=\xi_{r^2}(2t)+\de,$$
we see that $E_r$ is included in region bounded by $f_\de(\xi_r(t))$, whereas exterior of $E_r$ is mapped onto exterior of  $f_\de(\xi_r(t))$. So, we conclude that if $z\notin E_{1+\sqrt{|\de|}}$, then $f_\de^n(z)\rightarrow\infty$, hence
$$\mc J_\de\subset E_{1+\sqrt{|\de|}}.$$

Since $E_r$ is contained in $\{z\in\C:|\im z|\leeq r-1/r\}$, and for $r=1+\sqrt{|\de|}$ we have $r-1/r\leeq2\sqrt{|\de|}$ (cf. (\ref{eq:rr})), and the statement follows.
\end{proof}

\begin{lem}\label{lem:|de|}
There exists $\eta>0$ such that if $z\in \mc J_\de$, then
\begin{enumerate}
\item\label{lit:|de|1}
\begin{equation*}
\begin{array}{ll}
      |\im z-\im p_\de|<|\de|^{17/16}\quad\textrm{provided}\quad\re z\greq\re p_\de-|\de|^{15/16}, \\
      |\im z+\im p_\de|<|\de|^{17/16}\quad\textrm{provided}\quad\re z\leeq-\re p_\de+|\de|^{15/16},
\end{array}
\end{equation*}
\item\label{lit:|de|2}
$$|\re z|<\re p_\de +|\de|^2,$$
\end{enumerate}
where $0<|\de|<\eta$.
\end{lem}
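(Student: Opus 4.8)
The plan is to establish \eqref{lit:|de|1} first, by means of the linearisation $\Phi_\de$ of Section~\ref{sec:conj} at $p_\de$, and then to deduce \eqref{lit:|de|2} from it. A preliminary remark: since $f_\de(-z)=f_\de(z)$, the set $\mc K_\de$, and hence $\mc J_\de$, is symmetric under $z\mapsto-z$, so the second line of each of \eqref{lit:|de|1}, \eqref{lit:|de|2} follows from the first applied to $-z\in\mc J_\de$. Thus it suffices to treat a point $z\in\mc J_\de$ with $\re z\greq\re p_\de-|\de|^{15/16}$ (for part \eqref{lit:|de|1}) and to bound $\re z$ from above for arbitrary $z\in\mc J_\de$ (for part \eqref{lit:|de|2}).

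For \eqref{lit:|de|1}, by Lemma~\ref{lem:sqrt|de|} one has $\re z\leeq2+|\de|$ and $|\im z|\leeq2\sqrt{|\de|}$, while \eqref{eq:1/27} gives $\re p_\de=2-\tfrac13\re\de+O(|\de|^2)$ and $|\im p_\de|\leeq|\de|$; combined with the hypothesis these pin down $\re(z-p_\de)\in[-|\de|^{15/16},\,\tfrac32|\de|]$ and $|z-p_\de|\leeq4\sqrt{|\de|}$. Using the near-identity bounds \eqref{eq:PP} and $\hat\Phi_\de(\zeta)=\zeta+O(|\zeta|^2)$, the linearised coordinate $w:=\Phi_\de(z)$ then satisfies $|\re w|\leeq2|\de|^{15/16}$ and $|w|\leeq6\sqrt{|\de|}$; we may also assume $|w|\greq\tfrac13|\de|^{17/16}$, since otherwise $|z-p_\de|=|\hat\Phi_\de^{-1}(w)|<|\de|^{17/16}$ already. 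Now iterate $z$ forward: in the coordinate $w$ the map $f_\de$ acts as $w\mapsto\la_\de w$, with $|\la_\de|=2|p_\de|\in(3.9,4.1)$ and, by \eqref{eq:1/27}, $|\arg\la_\de|=|\arg p_\de|\leeq|\de|$. Let $N$ be the first integer with $|\la_\de^Nw|>|\de|^{1/4}$; then $N=O(\log(1/|\de|))$, the point $f_\de^N(z)$ is still within $O(|\de|^{1/4})$ of $p_\de$, it lies in $\mc J_\de$, so $|\im f_\de^N(z)|\leeq2\sqrt{|\de|}$ by Lemma~\ref{lem:sqrt|de|}. Writing $u:=\la_\de^Nw$, of modulus comparable to $|\de|^{1/4}$, and using $\hat\Phi_\de^{-1}(u)=u+O(|u|^2)$, the bound on $\im f_\de^N(z)$ forces $\arg u$ to lie within $O(\sqrt{|\de|}/|\de|^{1/4})+O(|\de|^{1/4})=O(|\de|^{1/4})$ of $\{0,\pi\}$. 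Pulling back, $w=\la_\de^{-N}u$ has $\arg w$ within $O(|\de|^{1/4})+N|\arg\la_\de|=O(|\de|^{1/4})$ of $\{0,\pi\}$; hence $|w|\leeq2|\re w|\leeq4|\de|^{15/16}$ and $|\im w|\leeq|w|\cdot O(|\de|^{1/4})=O(|\de|^{19/16})$. Undoing the near-identity coordinate change gives $|\im(z-p_\de)|\leeq|\im w|+O(|w|^2)=O(|\de|^{19/16})<|\de|^{17/16}$ once $|\de|$ is small, which is \eqref{lit:|de|1}.

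For \eqref{lit:|de|2}, write $\zeta_n:=f_\de^n(z)-p_\de$; from $f_\de(p_\de+\zeta)-p_\de=\la_\de\zeta+\zeta^2$ one gets $\zeta_{n+1}=\la_\de\zeta_n+\zeta_n^2$, so, using $\re\la_\de=2\re p_\de\greq3.9$ and $|\im\la_\de|\leeq|\de|$,
\[
\re\zeta_{n+1}\greq3.9\,\re\zeta_n-|\de|\,|\im\zeta_n|-(\im\zeta_n)^2.
\]
Suppose $\re\zeta_0\greq2|\de|^{33/16}$. Then inductively $\re\zeta_n\greq2|\de|^{33/16}>0$, so each $f_\de^n(z)$ lies to the right of $p_\de-|\de|^{15/16}$ and part \eqref{lit:|de|1} gives $|\im\zeta_n|<|\de|^{17/16}$; substituting, $|\de|\,|\im\zeta_n|+(\im\zeta_n)^2<2|\de|^{33/16}\leeq\re\zeta_n$, so $\re\zeta_{n+1}\greq2.9\,\re\zeta_n$ and $\re\zeta_n\to\infty$ — impossible, since $\mc J_\de\subset E_{1+\sqrt{|\de|}}$ has bounded real parts. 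Hence $\re z<\re p_\de+2|\de|^{33/16}<\re p_\de+|\de|^2$ for $|\de|$ small, and by the symmetry noted above $|\re z|<\re p_\de+|\de|^2$.

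The heart of the matter is the core estimate in \eqref{lit:|de|1}, and the one genuinely delicate point is the choice of the auxiliary radius $\varrho=|\de|^{1/4}$ at which the forward orbit is stopped: the distortion of $\hat\Phi_\de^{\pm1}$ at scale $\varrho$ contributes $O(\varrho)$ to $\arg w$, the crude imaginary-part bound of Lemma~\ref{lem:sqrt|de|} at the stopping point contributes $O(\sqrt{|\de|}/\varrho)$, and the accumulated rotation over the $O(\log(1/\varrho))$ iterates contributes $O(|\de|\log(1/\varrho))$; all three must be $o(|\de|^{1/8})$ in order to beat the exponent gap $\tfrac{17}{16}-\tfrac{15}{16}=\tfrac18$, which forces $\varrho$ to sit strictly between $\sqrt{|\de|}$ and a constant, and makes $\varrho=|\de|^{1/4}$ (indeed any $|\de|^{\beta}$ with $\tfrac18<\beta<\tfrac38$) work. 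Everything else — the expansions coming from \eqref{eq:1/27}, the bounds \eqref{eq:PP}, checking that $N$ exists and that the orbit stays in the domain of $\Phi_\de$, and tracking constants so that a single $\eta>0$ suffices — is routine bookkeeping.
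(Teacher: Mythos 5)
Your proof is correct, and it takes a genuinely different route from the paper's, although both rest on the same two ingredients: the linearisation $\Phi_\de$ of Section~\ref{sec:conj} and the crude strip bound of Lemma~\ref{lem:sqrt|de|}. The paper constructs, in the linearised coordinate, a pair of logarithmic-spiral barrier curves $\gamma_{\de,\beta_\de^{\pm}}$ passing through $-|\de|^{5/16}\pm 3i|\de|^{1/2}$, checks via Lemma~\ref{lem:sqrt|de|} that $\hat{\mc J_\de}$ is trapped between the preimage curves $\hat\Phi_\de^{-1}(\gamma_{\de,\beta_\de^{\pm}})$, and then reads off \eqref{lit:|de|1} by estimating where these curves cross the vertical line $\re z=-|\de|^{15/16}$ and \eqref{lit:|de|2} from the arguments of their intersections with $S(0,|\de|^2)$. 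You instead argue pointwise: fix $z\in\mc J_\de$, push it forward in the linearised chart until the orbit first exceeds the intermediate scale $|\de|^{1/4}$, apply the strip bound of Lemma~\ref{lem:sqrt|de|} at that scale to control $\arg(\la_\de^{N}w)$, and transport the angular estimate back through the rotation $\la_\de^{-N}$; for \eqref{lit:|de|2} you run a direct growth-contradiction off the recurrence $\zeta_{n+1}=\la_\de\zeta_n+\zeta_n^2$ together with part \eqref{lit:|de|1}, whereas the paper extracts it from the same barriers. Your identification of the admissible stopping window $\beta\in(1/8,3/8)$ makes transparent the role of the paper's exponent $5/16$, which lies in that window. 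The paper's version yields a slightly stronger global picture (an explicit pair of curves squeezing $\hat{\mc J_\de}$); your forward-iteration argument is more self-contained, as it needs no geometric analysis of the spiral barriers themselves, only iteration of a single point together with the near-identity bounds \eqref{eq:PP}.
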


\begin{proof} %For $\alpha=\pi$ the statement is obvious, thus we can assume that $\alpha\neq\pi$.
Proof will be carried out for $\hat f_\de$ and $z\in\hat {\mc J_\de}$ (i.e. $p_\de$ is moved to $0$).

The curves
   $$\gamma_{\de,\beta}(t)=e^{i\beta}e^{t\log \la_\de},$$
where $\beta\in\R$,
are invariant under $z\mapsto \la_\de z$, so the trajectories of $\hat f_\de$ lies on  $\hat \Phi^{-1}_\de(\gamma_{\de,\beta})$ (close to $0$).

The curve $\gamma_{\de,0}(t)$ intersect circle $S(0,|\de|^h)$ for $t=h\log_{|\la_\de|}|\de|$. Let us denote the intersection point by $b_{\de,h}$. Then, $\arg (b_{\de,h})=h\log_{|\la_\de|}|\de|\cdot\arg\la_\de$. Since $|\arg\la_\de|<|\de|$, we see that
  $$\arg (b_{\de,h})=h\log_{|\la_\de|}|\de|\cdot\arg\la_\de\rightarrow0,$$
uniformly, when $\de\rightarrow0$ and, $2\greq h\greq3/10$. Moreover
  $$\dist (b_{\de,h},\R^+)\leeq |\de|^h\cdot h\log_{|\la_\de|}|\de|\cdot\arg\la_\de,$$
thus for $ h\greq3/10$ we obtain
  $$\dist (b_{\de,h},\R^+)< |\de|^{5/4},$$
where $0<|\de|<\eta$, for suitably chosen $\eta>0$.

Next, if a line $e^{i\theta}\R^+$ intersects $\gamma_{\de,0}$ at a point $b_{\de,h'}$ where $2\greq h'\greq3/10$, we also have
\begin{equation}\label{eq:sd}
   \dist (b_{\de,h},e^{i\theta}\R^+)< |\de|^{5/4},
\end{equation}
for $h\greq h'$. Moreover this estimate will holds after rotation, so we can allow $\beta\neq0$.

%The same estimate holds for $\gamma_{\de,\beta}$ and  the line $e^{i\beta}\cdot\R^+$.

Let $\beta_\de$ and $t_\de$ be such that $\gamma_{\de,\beta_\de}(t_\de)=-|\de|^{5/16}+3i|\de|^{1/2}$ (then $\gamma_{\de,\beta_\de}(t_\de)$ lies on a circle $S(0,|\de|^{h_\de})$, where $h_\de$ is close to $5/16>3/10$). Because $\hat \Phi^{-1}_\de=z+a_\de z^2+O(z^3)$, we see that
   $$\im(\hat \Phi^{-1}_\de(\gamma_{\de,\beta_\de}(t_\de)))>2|\de|^{1/2}.$$
The same estimate is also valid for parameters close to $t_\de$, therefore we conclude from Lemma \ref{lem:sqrt|de|} that the Julia set lies below the curve $\hat \Phi^{-1}_\de(\gamma_{\de,\beta_\de})$.

Let $z=x+iy$, then $l_\de:y=-3|\de|^{3/16}x$, is the line that intersects $\gamma_{\de,\beta_\de}(t)$ at $t_\de$. We have $l_\de(-|\de|^{15/16})=3|\de|^{9/8}$. Let $t_\de'$ be such that $\re(\gamma_{\de,\beta_\de}(t_\de'))=-|\de|^{15/16}$. Then, $|\gamma_{\de,\beta_\de}(t_\de')-3|\de|^{9/8}|$ is close to $\dist(\gamma_{\de,\beta_\de}(t_\de'),l_\de)$, hence (\ref{eq:sd}) gives us
   $$|\gamma_{\de,\beta_\de}(t_\de')-3|\de|^{9/8}|<2|\de|^{5/4}.$$
Since $\hat \Phi^{-1}_\de=z+a_\de z^2+O(z^3)$, we see that
   $$\im(\hat \Phi^{-1}_\de(\gamma_{\de,\beta_\de}(t_\de')))<|\de|^{17/16}.$$
The same estimate are also valid for parameters less than $t_\de'$. Analogously, considering a curve $\gamma_{\de,\beta_\de^-}$ passing through the point $-|\de|^{5/16}-3i|\de|^{1/2}$, we can get lower estimate. So, the first statement follows from the symmetry.

The second statement follows from the fact that $\hat{\mc J_\de}$ lies between the curves $\hat \Phi^{-1}_\de(\gamma_{\de,\beta_\de^{\pm}})$, whereas arguments of the intersection points with $S(0,|\de|^2)$ are close to $\pi$.
\end{proof}

\subsection{The uniform convergence}
Now we are going to prove that $\vp_\de$ converges uniformly to $\vp_0$.

Let $ U_\de$, where $\de\in\mc  U$, be a simple connected neighborhood of $\mc J_{\de}$ containing $0$, disjoint from the postcritical set $P(f_\de)$, where
$$P(f_\de):=\overline{\bigcup_{n\greq1}f_\de^n(0)}.$$
If $\de=0$, then we assume that $(-2,2)\subset U_0$, and $U_0$ does not contain $\{-2,2\}$.

Let $f_{\de_0,\nu}^{-n}$ be an inverse branch of $f_{\de_0}^n$ defined on $ U_{\de_0}$. Then we denote by $f_{\de,\nu}^{-n}$ related inverse branches of $f_\de^n$ defined on $U_\de$, where $\de\in\mc U\cup\{0\}$ (by related we mean that the function $\de\mapsto f_{\de,\nu}^{-n}(0)$ is continuous). Let $V_n$ be the set of all possible choices of the inverse branches. For $\nu\in V_n$, $\de\in\mc U$ we write
$$\mc C_{\de,\nu}:=f_{\de,\nu}^{-n}(\mc J_\de).$$
If $\de=0$, then we take $\mc C_{0,\nu}:=\overline{f_{0,\nu}^{-n}(-2,2)}$.

\begin{lem}\label{lem:Cnu}
For every $\alpha\in(0,2\pi)$ and $n\greq1$ we have
$$\lim_{\de\rightarrow0}\sup_{\nu\in V_n}d_H(\mc C_{\de,\nu},\mc C_{0,\nu})=0,$$
where $\alpha=\arg\de$.
\end{lem}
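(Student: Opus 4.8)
The plan is to reduce everything to continuity of finitely many inverse branches and to use the Hausdorff‑metric continuity of the Julia sets already established in Corollary~\ref{cor:dH}. Fix $\alpha\in(0,2\pi)$, let $\de\to 0$ along $\mc R(\alpha)$, and fix $n\geq 1$. Since $V_n$ is a finite set (there are $2^n$ choices of inverse branch of $f_\de^n$ on the simply connected set $U_\de$), the supremum over $\nu\in V_n$ is a maximum over finitely many $\nu$, so it suffices to show that for each fixed $\nu\in V_n$ we have $d_H(\mc C_{\de,\nu},\mc C_{0,\nu})\to 0$ as $\de\to 0$.

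The key observations are: first, $\mc J_\de\to\mc J_0=[-2,2]$ in the Hausdorff metric by Corollary~\ref{cor:dH}; second, the inverse branches $f_{\de,\nu}^{-n}$ depend continuously on $\de$ in a strong sense. The latter follows from the fact that $U_\de$ can be chosen to avoid the postcritical set $P(f_\de)$ with a definite margin for $\de$ near $0$ (recall $P(f_0)=\{-2,2\}\cup(\text{forward orbit})$, and by construction $U_0$ stays off $\{-2,2\}$ while $(-2,2)\subset U_0$), so that on a fixed compact neighborhood $K$ with $\mc J_\de\subset K\subset U_\de$ for all small $\de$, the branch $f_{\de,\nu}^{-n}$ is a well‑defined holomorphic map and, by the normality/holomorphic‑motion machinery already invoked in Section~\ref{sec:formalism}, $f_{\de,\nu}^{-n}\to f_{0,\nu}^{-n}$ uniformly on $K$. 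Concretely one writes $f_{\de,\nu}^{-n}=g_{1}\circ\cdots\circ g_{n}$ where each $g_k$ is an inverse branch of $f_\de$, i.e. of the form $w\mapsto \pm\sqrt{w+2-\de}$ with a fixed sign, and these converge locally uniformly to the corresponding branches for $\de=0$ (the branch point $w=-2+\de\to -2$ stays outside $K$ by the margin above), so the composition converges uniformly on $K$.

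Combining these two facts gives the conclusion: given $\ve>0$, uniform convergence of $f_{\de,\nu}^{-n}$ on $K$ yields, for $\de$ small, $|f_{\de,\nu}^{-n}(w)-f_{0,\nu}^{-n}(w)|<\ve/2$ for all $w\in K\supset\mc J_\de\cup[-2,2]$; and Hausdorff convergence $\mc J_\de\to[-2,2]$ together with uniform (Lipschitz, on $K$) continuity of $f_{0,\nu}^{-n}$ gives $d_H\big(f_{0,\nu}^{-n}(\mc J_\de),\,f_{0,\nu}^{-n}([-2,2])\big)<\ve/2$ for $\de$ small. A triangle inequality for $d_H$ then yields $d_H(\mc C_{\de,\nu},\mc C_{0,\nu})<\ve$; note that $\mc C_{0,\nu}=\overline{f_{0,\nu}^{-n}(-2,2)}=f_{0,\nu}^{-n}([-2,2])$ since $f_{0,\nu}^{-n}$ extends continuously to the endpoints (the only delicate point being whether the relevant branch hits a critical value, but the chosen branches avoid this by the postcritical‑set separation, or one passes to the closure as in the definition of $\mc C_{0,\nu}$). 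Since $V_n$ is finite, taking the max over $\nu$ preserves the bound.

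The main obstacle is the behavior near the endpoints $\pm 2$: the map $f_0$ has critical value $-2$, and the inverse branches of $f_0^n$ are singular precisely over the postcritical set, so one must be careful that the chosen neighborhoods $U_\de$ keep a uniform distance from $P(f_\de)$ as $\de\to 0$ — in particular that the forward orbit $\{f_\de^k(0)\}$ does not crowd the piece of $\mc J_\de$ near $0$ where the branch is being inverted. For the Misiurewicz parameter $\de=0$ the postcritical set is just the finite set $\{-2,2\}$ (since $0\mapsto-2\mapsto 2\mapsto 2$), and for $\de$ near $0$ the points $f_\de^k(0)$ stay near $\{-2,2\}$, so the required separation holds on a fixed small neighborhood; making this precise (uniformly in $\de$, for each fixed $n$) is the technical heart of the argument, but it is exactly the kind of estimate furnished by the conjugation $\Phi_\de$ near the fixed point $p_\de$ from Section~\ref{sec:conj} and by Lemma~\ref{lem:|de|}.
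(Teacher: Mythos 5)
Your approach is genuinely different from the paper's. The paper does not try to extend the inverse branches $f_{\de,\nu}^{-n}$ to a common domain; instead it exhibits explicit separating curves that are disjoint from all the Julia sets $\mc J_\de$ simultaneously, namely the preimages $f_{\de,\nu}^{-n}(Y)$ of the imaginary axis $Y$. The crucial point is geometric: $f_\de(Y)=-2+\de+\R^-$ has constant imaginary part $\im\de$, whereas by Lemma~\ref{lem:|de|} the part of $\mc J_\de$ near $-p_\de\approx-2+\de/3$ has imaginary part within $|\de|^{17/16}$ of $\im\de/3$, so $f_\de(Y)\cap\mc J_\de=\emptyset$ for $\alpha\neq\pi$, hence $f_{\de,\nu}^{-n}(Y)\cap\mc J_\de=\emptyset$. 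With the cylinders carved out of $\mc J_\de$ by curves that never meet $\mc J_\de$ and that move continuously with $\de$, the Hausdorff convergence $\mc J_\de\to[-2,2]$ of Corollary~\ref{cor:dH} transfers directly to the cylinders. For $\alpha=\pi$ the cylinders are intervals and the argument is even simpler (separation by preimages of $0$).

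Your argument, by contrast, hinges on the claim that there is ``a fixed compact neighborhood $K$ with $\mc J_\de\subset K\subset U_\de$ for all small $\de$,'' so that $f_{\de,\nu}^{-n}$ is holomorphic on $K$ uniformly in $\de$ and converges there. This is the genuine gap: no such fixed $K$ exists. The branch point of $f_\de^{-1}$, namely the critical value $-2+\de$, must be excluded from the domain of any inverse branch, but $-p_\de\approx-2+\de/3$ lies in $\mc J_\de$, so $\dist(-2+\de,\,\mc J_\de)\leq\frac23|\de|\to0$. Consequently any simply connected domain $U_\de\supset\mc J_\de$ on which inverse branches are defined must pinch down toward $-2$ at scale $|\de|$ as $\de\to0$, and there is no compact $K$ containing $\mc J_\de$ (let alone $\mc J_\de\cup[-2,2]$) and avoiding $-2+\de$ for all small $\de$. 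Your parenthetical claim that ``the branch point $w=-2+\de\to-2$ stays outside $K$ by the margin above'' is therefore false, and your reassurance that the postcritical points ``stay near $\{-2,2\}$, so the required separation holds on a fixed small neighborhood'' has the sign of the difficulty backwards: the postcritical set clustering at $\pm2$ is exactly what prevents a fixed margin, since $\mc J_\de$ also clusters there. To rescue your approach one would need to arrange branch cuts from $-2+\de$ that avoid both $\mc J_\de$ and $[-2,2]$, and that requires knowing precisely where $\mc J_\de$ sits relative to $-2+\de$ --- which is the content of Lemma~\ref{lem:|de|} and is exactly what the paper's separating-curve argument exploits directly, without any branch-extension machinery.
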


\begin{proof}
Fix $n\greq1$. If $\arg\de=\pi$, then every two consecutive cylinders (that are included in $\R$) are separated by an appropriate preimage of $0$. Because $f_{\de,\nu}^{-n}(0)\rightarrow f_{0,\nu}^{-n}(0)$ and $p_\de\rightarrow p_0$, the assertion follows.
Of course cylinders are also separated by preimages of the imaginary axis $Y$, that are dinjoint from $\mc J_\de$.

Fix $\alpha\in(0,2\pi)\sms\{\pi\}$. If we prove that $f_{\de,\nu}^{-n}(Y)$ are disjoint from $\mc J_\de$ also for $\arg\de=\alpha$, the assertion will follow from Corollary \ref{cor:dH}.

We have $f_\de(Y)=-2+\de+\R^-$, so the imaginary part of every point from $f_\de(Y)$ is equal to $\im\de$, whereas $\im(- p_{\de})\approx\im\de/3$. Thus, we conclude from Lemma \ref{lem:|de|}, that $f_\de(Y)$ is disjoint from the Julia set, so also $f_{\de,\nu}^{-n}(Y)$.
\end{proof}

Since we have
$$\lim_{n\rightarrow\infty}\sup_{\nu\in V_n}\diam(\mc C_{0,\nu})=0,$$
Lemma \ref{lem:Cnu} leads to:

\begin{prop}\label{prop:uniform}
If $\alpha\in(0,2\pi)$ and $\de\rightarrow0$ where $\alpha=\arg\de$, then the convergence
$$\vp_\de\rightrightarrows\vp_0,$$
is uniform on the set $\mc J_{\de_0}$.
\end{prop}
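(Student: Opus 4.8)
The plan is to transfer the cylinder estimate of Lemma \ref{lem:Cnu} through the maps $\vp_\de$ and $\vp_0$. The structural point is that these maps respect the combinatorics: since $\vp_\de$ conjugates $f_{\de_0}$ to $f_\de$ and is a bijection of $\mc J_{\de_0}$ onto $\mc J_\de$, one has $\vp_\de\big(f_{\de_0,\nu}^{-n}(\mc J_{\de_0})\big)=f_{\de,\nu}^{-n}(\mc J_\de)$, that is $\vp_\de(\mc C_{\de_0,\nu})=\mc C_{\de,\nu}$ for every $\nu\in V_n$; likewise, using only $\vp_0\circ f_{\de_0}=f_0\circ\vp_0$ and $\vp_0(\mc J_{\de_0})=[-2,2]$, applying the inverse branch $n$ times and taking closures yields $\vp_0(\mc C_{\de_0,\nu})\subseteq\mc C_{0,\nu}$. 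Moreover, since $f_{\de_0}^{-n}(\mc J_{\de_0})=\mc J_{\de_0}$, the family $\{\mc C_{\de_0,\nu}\}_{\nu\in V_n}$ covers $\mc J_{\de_0}$.

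Now fix $\ve>0$. First I would invoke the displayed limit preceding the statement to choose $n\greq1$ with $\sup_{\nu\in V_n}\diam(\mc C_{0,\nu})<\ve/2$. Given $s\in\mc J_{\de_0}$, pick $\nu\in V_n$ with $s\in\mc C_{\de_0,\nu}$; then $\vp_\de(s)\in\mc C_{\de,\nu}$ and $\vp_0(s)\in\mc C_{0,\nu}$ by the previous paragraph. Letting $y\in\mc C_{0,\nu}$ realize $\dist(\vp_\de(s),\mc C_{0,\nu})$, we get
$$|\vp_\de(s)-\vp_0(s)|\leeq|\vp_\de(s)-y|+|y-\vp_0(s)|\leeq d_H(\mc C_{\de,\nu},\mc C_{0,\nu})+\diam(\mc C_{0,\nu}),$$
and hence, taking suprema,
$$\sup_{s\in\mc J_{\de_0}}|\vp_\de(s)-\vp_0(s)|\leeq\sup_{\nu\in V_n}d_H(\mc C_{\de,\nu},\mc C_{0,\nu})+\sup_{\nu\in V_n}\diam(\mc C_{0,\nu}).$$

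With $n$ now fixed, Lemma \ref{lem:Cnu} says the first term on the right tends to $0$ as $\de\rightarrow0$ along $\arg\de=\alpha$, so for $|\de|$ small the whole right-hand side is $<\ve$; this is precisely the uniform convergence $\vp_\de\rightrightarrows\vp_0$ on $\mc J_{\de_0}$. The only point requiring genuine care is the combinatorial compatibility invoked in the first paragraph: one must verify that the inverse branch $f_{\de,\nu}^{-n}$ declared ``related'' to $f_{\de_0,\nu}^{-n}$ is indeed the $\vp_\de$-image of the latter, and that for $\de=0$ the semiconjugacy---which is two-to-one exactly over the preimages of $0$---still sends $\mc C_{\de_0,\nu}$ into the closed cylinder $\mc C_{0,\nu}$. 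Both reduce to continuity of $\de\mapsto f_{\de,\nu}^{-n}(0)$ together with the functional equations, but the identification of branches is the place where the argument must be pinned down.
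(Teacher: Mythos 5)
Your argument is correct and is precisely the one the paper intends: the remark before the statement observes that $\sup_{\nu\in V_n}\diam(\mc C_{0,\nu})\to0$ and then cites Lemma \ref{lem:Cnu}, which is exactly your two-term bound $|\vp_\de(s)-\vp_0(s)|\leeq d_H(\mc C_{\de,\nu},\mc C_{0,\nu})+\diam(\mc C_{0,\nu})$. Your extra paragraph pinning down the branch compatibility (that $\vp_\de(\mc C_{\de_0,\nu})=\mc C_{\de,\nu}$ via the conjugacy, and $\vp_0(\mc C_{\de_0,\nu})\subseteq\mc C_{0,\nu}$ via the semiconjugacy) fills in exactly the detail the paper leaves implicit.
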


%$f_{\de,+}^{-1}:\mc J_{\de}\rightarrow\mc J_{\de}\cap\{z\in\C:\re z>0\}$, $f_{\de,+}^{-n}:=(f_{\de,+}^{-1})^n$

%$f_{\de,-}^{-1}:\mc J_{\de}\rightarrow\mc J_{\de}\cap\{z\in\C:\re z<0\}$, $f_{\de,-}^{-n}:=(f_{\de,-}^{-1})^n$

\section{Postcritical set}\label{sec:post}

%$$P(f_\de):=\overline{\bigcup_{n=1}^{\infty}f_\de^n(0)}$$

%$$E_r:\frac{x^2}{(r+1/r)^2}+\frac{y^2}{(r-1/r)^2}\leeq1$$
%Focuses are $\pm2$.

Now we give a Proposition that helps us to control the postcritical set $P(f_\de)$. In particular we will see that $P(f_\de)$ is not to close to $0$.

\begin{prop}\label{prop:P}
For every $\alpha\in(0,2\pi)$ and $\theta>0$, there exist $r>0$ and $\eta>0$ such that
   $$P(f_\de)\subset\big((\C\sms E_r)\cup B(-2,\theta)\cup B(2,\theta)\big),$$
where $0<|\de|<\eta$ and $\alpha=\arg\de$.
\end{prop}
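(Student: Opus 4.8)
The plan is to track the forward orbit of the critical point $0$ under $f_\de$ and show it eventually lands near $2$ (staying close to a neighborhood of the fixed point $p_\de$), so the closure of the orbit sits outside a definite ellipse $E_r$ except for small balls around $\pm2$. The first step: compute the first few iterates. We have $0\mapsto f_\de(0)=-2+\de$, then $f_\de(-2+\de)=(-2+\de)^2-2+\de=2-3\de+\de^2=2-3\de+O(\de^2)$, and one checks this point is within $C|\de|$ of $2=p_0$, in fact within $C|\de|$ of $p_\de$ since $p_\de=2-\tfrac13\de+O(\de^2)$ by \eqref{eq:1/27} — the difference $f_\de^2(0)-p_\de = -3\de+\tfrac13\de+O(\de^2)=-\tfrac83\de+O(\de^2)$. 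So after two steps the orbit enters $B(p_\de, C|\de|)$.

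Next I would use the conjugation $\Phi_\de$ from Section~\ref{sec:conj}: near $p_\de$ the dynamics is linearized, $\Phi_\de\circ f_\de=\la_\de\Phi_\de$ with $|\la_\de|=|2p_\de|>3$ for $|\de|$ small. Writing $w_0:=\Phi_\de(f_\de^2(0))$, we have $|w_0|\leq C'|\de|$ by \eqref{eq:PP}, and the subsequent orbit points are $\Phi_\de^{-1}(\la_\de^k w_0)$ as long as $\la_\de^k w_0$ stays in the linearization domain $B(0,r_{\mathrm z})$. These points move away from $p_\de$ geometrically. I claim that by choosing $r$ small enough (depending on $\theta$), every point $f_\de^n(0)$ for $n\geq 2$ is either within $\theta$ of $p_\de$ (hence in $B(2,\theta)$ once $|\de|<\eta$, possibly after also invoking $B(-2,\theta)$ via the symmetry $f_\de(z)$ of the preimage structure) or has escaped the linearization disk — but once it escapes $B(p_\de,r_{\mathrm z})$ its orbit continues toward $2$ along a trajectory that Lemma~\ref{lem:|de|}-type estimates pin near the real segment, so it never re-enters a small ellipse $E_r$ around $[-2,2]$ except near the endpoints. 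Concretely: the orbit points near $p_\de=2$ that have $|\Phi_\de(f_\de^n(0))|$ comparable to $r_{\mathrm z}$ map forward to points within $\theta$ of $2$ (hence in $B(2,\theta)$) provided $r$ is chosen below the $\Phi_\de$-image of the relevant circle; points with larger $\Phi_\de$-modulus sit outside $E_r$ directly because $\Phi_\de^{-1}$ of a definite-size annulus around $p_\de$ lies outside a definite ellipse.

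The cleanest organization is: (i) show $\{f_\de^n(0): n\geq N_0\}$ for a fixed finite $N_0$ lies in $B(p_\de,r_{\mathrm z})\cup(\C\sms E_{r_1})$ for some $r_1>0$ independent of small $\de$, using that outside the linearization disk but still in a neighborhood of $[-2,2]$ the only accumulation is at $\pm2$, together with Lemma~\ref{lem:sqrt|de|} / Lemma~\ref{lem:|de|} controlling where things can be; (ii) for the finitely many early points $f_\de(0),\dots,f_\de^{N_0-1}(0)$, note $f_\de(0)=-2+\de\in B(-2,\theta)$, $f_\de^2(0)\in B(2,\theta)$, and the rest are in $B(p_\de,C|\de|)\subset B(2,\theta)$ for $|\de|<\eta$; (iii) inside $B(p_\de,r_{\mathrm z})$, partition by modulus of $\Phi_\de$: the sub-disk $\Phi_\de^{-1}(B(0,\rho))$ with $\rho$ small enough is inside $B(2,\theta)$, and $\Phi_\de^{-1}(\{\rho\leq|w|\leq r_{\mathrm z}\})$ is outside $E_r$ for a suitable $r=r(\rho)>0$; since multiplication by $\la_\de$ expands, every orbit point in $B(p_\de,r_{\mathrm z})$ is in one of these two sets. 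Taking closures and using compactness then gives the stated inclusion, and finally intersecting the finitely many constraints yields a single $r>0$ and $\eta>0$.

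The main obstacle I anticipate is step (i): controlling orbit points that have left the linearization disk $B(p_\de,r_{\mathrm z})$ but have not yet been shown to be near $2$. For $\de=0$ this is immediate ($P(f_0)=\{-2,2\}$), but for $\de\neq 0$ one must rule out that some iterate wanders into the thin ellipse $E_r$ away from $\pm2$. Here I would lean on Lemma~\ref{lem:|de|}\eqref{lit:|de|1}: the forward orbit, once near $p_\de$, stays in a narrow horizontal strip and escapes monotonically in modulus toward $2$, so it cannot return to a small neighborhood of the interior of $[-2,2]$; combined with the fact that the orbit reaches $B(p_\de,r_{\mathrm z})$ after a bounded number of steps whose values we computed explicitly, this closes the argument. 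One must be slightly careful that the relevant estimates from Lemma~\ref{lem:|de|} were stated for points of $\mc J_\de$, not orbit points of $0$, so I would instead argue directly with the linearizing coordinate: outside $\Phi_\de^{-1}(B(0,\rho))$ but inside $B(p_\de,r_{\mathrm z})$ the point is automatically outside $E_r$, and the very next iterate that would leave $B(p_\de,r_{\mathrm z})$ does so at $\Phi$-modulus in $[\rho,|\la_\de| r_{\mathrm z}]$ mapping into a region we already control — so in fact the orbit never genuinely "escapes and wanders," it simply converges to $2$, and no separate step (i) is needed beyond this observation.
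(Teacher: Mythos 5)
Your initial computation is right — $f_\de^2(0)-p_\de = -\tfrac83\de+O(\de^2)$, and tracking the orbit through the linearizing coordinate $\Phi_\de$ near $p_\de$ is exactly the paper's strategy. But there is a genuine gap in step (iii), and it occurs precisely at the point that makes the proposition nontrivial. You assert that $\Phi_\de^{-1}(\{\rho\leq|w|\leq r_{\mathrm z}\})$ ``lies outside a definite ellipse $E_r$.'' This is false. In $\hat\Phi_0$-coordinates the local Julia set $[-2,2]$ near $2$ sits along $\R^-$; the full annulus $\{\rho\leq|w|\leq r_{\mathrm z}\}$ intersects $\R^-$, so its $\Phi_\de^{-1}$-image meets (for small $\de$) a real sub-segment of $[-2,2]$ near $2$, and that set is \emph{inside} every $E_r$ with $r>1$. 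So no choice of $\rho$ and $r$ makes your dichotomy work: a point can escape $\Phi_\de^{-1}(B(0,\rho))$ and still be well inside $E_r$.

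What you are missing — and what the paper supplies via its condition (\ref{eq:dist}) — is the \emph{angular} information. The orbit point $f_\de^{n+2}(0)$ is not an arbitrary point of that annulus; in $\Phi_\de$-coordinates it lies (to leading order) on the ray of argument $\arg(-\tfrac83\de)=\alpha+\pi$. Since you assume $\alpha\in(0,2\pi)$ \emph{strictly} avoids $0$, this ray makes a definite angle with $\R^-$, and hence $\hat\Phi_0^{-1}(-\tfrac83\de_t)$ sits at positive distance from $\R^-$, uniformly for $|\de_t|$ bounded away from $0$. This is precisely how $\alpha\neq 0$ enters — and it must enter somewhere, since the statement fails on the ray $\mc R(0)$. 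The paper organizes this by fixing $\de_1$ with $\arg\de_1=\alpha$ satisfying (\ref{eq:dist}), considering sequences $\de_t/4^n$, showing $f^{n+2}_{\de_t/4^n}(0)\rightrightarrows\hat\Phi_0^{-1}(-\tfrac83\de_t)+2$ uniformly in $t\in(1/4,1]$, placing this limit in $\C\sms E_r$, and then invoking the forward invariance $f_\de(\C\sms E_r)\subset\C\sms E_r$ (valid for $|\de|$ small relative to $r$) to handle all later iterates. Your sketch also never invokes this invariance — without it, even granting a single escaping iterate, you cannot conclude that subsequent iterates remain outside $E_r$. So to repair the argument you need both (a) the angular estimate replacing the false annulus claim, and (b) the invariance of $\C\sms E_r$ to propagate the conclusion forward.
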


\begin{proof}
Fix $\alpha\in(0,2\pi)$ and $\theta>0$. Let $\de_1$, where $\arg\de_1=\alpha$, be such that
\begin{equation}\label{eq:3/8}
\big|\de_1\big|<\frac38\min\Big(r_{\textrm{z}},\frac\theta2\Big),
\end{equation}
and
\begin{equation}\label{eq:dist}
\frac34\dist\Big(-\frac83t\cdot \de_1,\R^-\Big)<\dist\Big(\hat\Phi^{-1}_{0}\Big(-\frac83t\cdot\de_1\Big),\R^-\Big),
\end{equation}
for $t\in(0,1]$.

Write $\de_t:=t\cdot\de_1$.
We will consider sequences of parameters of the form $\de_t/4^n$, for $t\in(1/4,1]$.

Let $v_{t,n}:=f^2_{\delta_t/4^n}(0)-p_{\delta_t/4^n}$. Then, using (\ref{eq:Phi-1}) we obtain
\begin{multline}\label{eq:bbb}
f_{\delta_t/4^n}^{n+2}(0)=f_{\delta_t/4^n}^n(p_{\delta_t/4^n}+v_{t,n})\\= f_{\delta_t/4^n}^n\Big( p_{\delta_t/4^n}+\frac{v_{t,n}\lambda_{\delta_t/4^n}^n}{\lambda_{\delta_t/4^n}^n}\Big) =\hat\Phi_{n,\delta_t/4^n}^{-1}(v_{t,n}\lambda_{\delta_t/4^n}^n)+p_{\delta_t/4^n},
\end{multline}
provided $\hat\Phi_{n,\delta_t/4^n}^{-1}(v_{t,n}\lambda_{\delta_t/4^n}^n)$ is well defined, that is  $|v_{t,n}\lambda_{\delta_t/4^n}^n|<r_{\textrm{z}}$, for $n$ large enough.

Now we will compute the limit of $v_{t,n}\lambda_{\delta_t/4^n}^n$ when $n\rightarrow\infty$. First, note that
$$p_{\delta_t/4^n}+v_{t,n}=f^2_{\delta_t/4^n}(0)=2-3\frac{\delta_t}{4^n}+\frac{\delta_t^2}{4^{2n}}.$$
Because
$$p_{\delta_t/4^n}=2-\frac13\frac{\delta_t}{4^n}+O\Big(\frac{\delta_t^2}{4^{2n}}\Big),$$
we get
$$v_{t,n}=-\frac83\frac{\delta_t}{4^n}+O\Big(\frac{\delta_t^2}{4^{2n}}\Big).$$
Therefore
\begin{equation}\label{eq:bb}
   v_{t,n}\lambda_{\delta_t/4^n}^n= -\frac83\frac{\lambda_{\delta_t/4^n}^n}{4^n}\de_t+O\Big(\frac{\lambda_{\delta_t/4^n}^n}{4^{2n}}\delta_t^2\Big).
\end{equation}

We have
$$\lambda_\delta=f'_\delta(p_\delta)=4+3\Big(\sqrt{1-\frac49\delta}-1\Big),$$
hence
$$\frac{\lambda_{\delta_t/4^n}^n}{4^n}=\bigg(1+\frac34\Big(\sqrt{1-\frac49\frac{\delta_t}{4^n}}-1\Big)\bigg)^n.$$
Since $(\sqrt{1-\frac49\frac{\delta_t}{4^n}}-1)\rightarrow0$, and
$$\Big(\sqrt{1-\frac49\frac{\delta_t}{4^n}}-1\Big)n\rightarrow0,$$
we obtain
$$\frac{\lambda_{\delta_t/4^n}^n}{4^n}= \bigg(1+\frac34\Big(\sqrt{1-\frac49\frac{\delta_t}{4^n}}-1\Big)\bigg)^n\rightarrow e^0=1.$$
Combining this with (\ref{eq:bb}), we see that
$$v_{t,n}\lambda_{\delta_t/4^n}^n\rightarrow-\frac83\delta_t.$$

So, for $n$ large enough, the assumption (\ref{eq:3/8}) leads to $|v_{t,n}\lambda_{\delta_t/4^n}^n|<r_{\textrm{z}}$.
Therefore $\hat\Phi_{n,\delta_t/4^n}^{-1}(v_{t,n}\lambda_{\delta_t/4^n}^n)$ is well defined, and using (\ref{eq:bbb}) we obtain
\begin{equation}\label{eq:8/3}
f_{\delta_t/4^n}^{n+2}(0)=\hat\Phi_{n,\delta_t/4^n}^{-1}(v_{t,n}\lambda_{\delta_t/4^n}^n)+p_{\delta_t/4^n}\rightrightarrows \hat\Phi_0^{-1}\Big(-\frac83\delta_t\Big)+2,
\end{equation}
where convergence is uniform with respect to $t\in(1/4,1]$. Next, the assumption (\ref{eq:dist}) gives us
\begin {equation*}\label{eq:dis2}
\frac12\dist\Big(-\frac83 \de_t+2,[-2,2]\Big)<\dist\Big(f_{\delta_t/4^n}^{n+2}(0),[-2,2]\Big),
\end{equation*}
for $t\in(1/4,1]$ and $n>n_0$ for suitable $n_0\in\N$. So, there exists $r>1$ such that
\begin{equation}\label{eq:22}
f_{\delta_t/4^n}^{n+2}(0)\in\C\sms E_r,
\end{equation}
for $t\in(1/4,1]$ and $n>n_0$. Since $f_0(E_r)=E_{r^2}$ and distance between $E_r$ and $E_{r^2}$ is equal to $(r^2+1/r^2)-(r+1/r)$ (semi-major axis length difference), we conclude that
\begin{equation*}
f_{\delta}(\C\sms E_r)\subset\C\sms E_r,
\end{equation*}
provided $|\de|<(r^2+1/r^2)-(r+1/r)$. Changing $n_0$ if necessary, we can assume that $\de_1/4^{n_0}<(r^2+1/r^2)-(r+1/r)$ and then (\ref{eq:22}) gives us
\begin{equation*}
f_{\delta_t/4^n}^{n+2+k}(0)\in\C\sms E_r,
\end{equation*}
for $t\in(1/4,1]$, $n>n_0$ and $k\greq0$.

Using (\ref{eq:8/3}), (\ref{eq:PP}) and the assumption (\ref{eq:3/8}), we obtain $f_{\delta_t/4^n}^{n+2}(0)\in B(2,\theta)$. Since $f_{\delta_t/4^n}(0)\in B(-2,\theta)$ and then $f_{\delta_t/4^n}^{k+2}(0)\in B(2,\theta)$, where $0\leeq k< n$, $n>n_0$ and $t\in(1/4,1]$, the statement holds for $\eta=|\de_1|/4^{n_0}$.
\end{proof}

\section{Cylinders}\label{sec:cyl}

In this section we define cylinders $\mc C_{\de,n}^{-2}$, $\mc C_{\de,n}^{0}$, $\mc C_{\de,n}^{+2}$, that give partitions of neighborhoods of the points $-p_\de$, $0$, $p_\de$, respectively. However, first we will deal with inverse branches of $f_\de$.

Since $\de_0\in\R^-$, the Julia set $\mc J_{\de_0}$ is disjoint with the imaginary axis, so let us write
$$
\begin{array}{lll}
f_{\de_0,+}^{-1}:\mc J_{\de_0}\rightarrow\mc J_{\de_0,+}=\mc J_{\de_0}\cap\{z\in\C:\re z>0\},\\
f_{\de_0,-}^{-1}:\mc J_{\de_0}\rightarrow\mc J_{\de_0,-}=\mc J_{\de_0}\cap\{z\in\C:\re z<0\}.
\end{array}
$$
Using the holomorphic motion $\vp_\de$, we can define $f_{\de,\pm}^{-1}$ for parameters $\de\in\mc U$, namely: $f_{\de,\pm}^{-1}:\mc J_{\de}\rightarrow\vp_\de(\mc J_{\de_0,\pm})$.

But, for $\de$ close to $0$ (i.e. $0<|\de|<\eta(\alpha)$), the Julia sets are also disjoint from $i\R$ (see proof of Lemma \ref{lem:Cnu}), so we can define $f_{\de,\pm}^{-1}$ analogously as for $\de_0$:
$$
\begin{array}{lll}
f_{\de,+}^{-1}:\mc J_{\de}\rightarrow\mc J_{\de,+}=\mc J_{\de}\cap\{z\in\C:\re z>0\},\\
f_{\de,-}^{-1}:\mc J_{\de}\rightarrow\mc J_{\de,-}=\mc J_{\de}\cap\{z\in\C:\re z<0\}.
\end{array}
$$
For $\de=0$ we take $f_{0,+}^{-1}:[-2,2]\rightarrow[0,2]$, and $f_{0,-}^{-1}:[-2,2]\rightarrow[-2,0]$.
Moreover
$$f_{\de,+}^{-n}:=(f_{\de,+}^{-1})^n, \quad f_{\de,-}^{-n}:=(f_{\de,-}^{-1})^n.$$

\begin{figure}[!ht]
   \centering
        \includegraphics[scale=0.72]{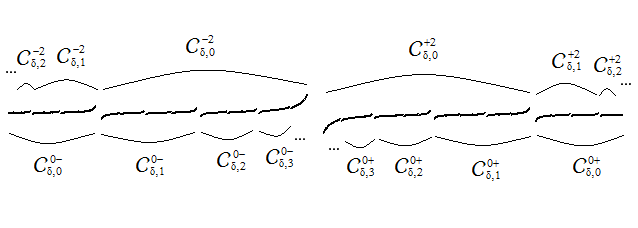}
        \caption{Partition $\mc J_{\de}$ onto cylinders, where $\de=\frac{1}{50}+\frac{1}{50}i$.}
\end{figure}

Let us now define partition of $\mc J_{\de,+}$ onto cylinders $\mc C_{\de,n}^{+2}$, $n\greq0$, which can be used to describe neighborhoods of $p_\de\approx2$.

First, note that $\mc J_{\de,+}=\mc J_{\de,+-}\cup\mc J_{\de,++}$, where $\mc J_{\de,+-}:=f_{\de,+}^{-1}(\mc J_{\de,-})$, and $\mc J_{\de,++}:=f_{\de,+}^{-1}(\mc J_{\de,+})$. We take
$$\mc C_{\de,0}^{+2}:=\mc J_{\de,+-},\quad\textrm{ and }\quad\mc C_{\de,n}^{+2}:=f_{\de,+}^{-n}(\mc C_{\de,0}^{+2}).$$
In particular we have $\mc J_{\de,++}=\bigcup_{n=1}^{\infty}\mc C_{\de,n}^{+2}\cup\{p_\de\}$ and
$$\mc J_{\de,+}=\bigcup_{n=0}^{\infty}\mc C_{\de,n}^{+2}\cup\{p_\de\}.$$

Cylinders $\mc C_{\de,n}^{-2}$, $n\greq0$, describing neighborhoods of $-p_\de\approx-2$, are placed symmetrically with respect to $0$. So we obtain$$\mc J_{\de,-}=\bigcup_{n=0}^{\infty}\mc C_{\de,n}^{-2}\cup\{-p_\de\}.$$

In order to describe partition of $\mc J_\de$ close to $0$, we take
$$\mc C_{\de,0}^{0+}:=\mc J_{\de,++},\quad\textrm{ and }\quad\mc C_{\de,n}^{0+}:=f_{\de,+}^{-1}(\mc C_{\de,n-1}^{-2})\quad \textrm{for }n\greq1,$$
thus $\mc J_{\de,+}=\bigcup_{n=0}^{\infty}\mc C_{\de,n}^{0+}\cup\{f_{\de,+}^{-1}(-p_\de)\}$, whereas cylinders $\mc C_{\de,n}^{0-}$, $n\greq0$ are placed symmetrically with respect to $0$. Finally
$$\mc C_{\de,n}^{0}:=\mc C_{\de,n}^{0-}\cup\mc C_{\de,n}^{0+}.$$
Note that
$$\mc J_\de=\bigcup_{n=0}^{\infty}\mc C^{0}_{\de,n}\cup f_{\de}^{-1}(\{-p_\de\}).$$

We have the following important relations
$$f_\de(\mc C^{0+}_{\de,n})=f_\de( \mc C^{0-}_{\de,n})=\mc C^{-2}_{\de,n-1},\qquad f_\de(\mc C^{-2}_{\de,n})= \mc C^{+2}_{\de,n-1},$$
for $n\greq1$, hence $f_\de^2(\mc C^{0+}_{\de,n})=f_\de^2( \mc C^{0-}_{\de,n})=\mc C^{+2}_{\de,n-2}$, for $n\greq2$.

We will also consider the sets:
$$\mc M_{\de,N}^{0}=\bigcup_{n\greq N}\mc C^{0}_{\de,n}.$$
Next, we take $C_n^0:=\vp_\de^{-1}(\mc C_{\de,n}^0)$ and $M_N^0:=\vp_\de^{-1}(\mc M_{\de,N}^0)$ (subsets of $\mc J_{\de_0}$).

Notation: Let $\mc C_{\de,n}^{\pm2}:=\mc C_{\de,n}^{-2}\cup\mc C_{\de,n}^{+2}$, whereas $\mc C_{\de,n}^{0\pm}$ means "$\mc C_{\de,n}^{0-}$ or $\mc C_{\de,n}^{0+}$".

\begin{lem}\label{lem:mm}
   For every $\alpha\in(0,2\pi)$ there exist $K>1$ and $\eta>0$ such that if $z\in\mc C^{0}_{\de,n}$, $n\greq0$, then
     \begin{enumerate}

       \item\label{lit:mm1}
        $|z|>K^{-1}|\lambda_\de|^{-n/2} $,
        \item\label{lit:mm2}
        $|z|<K|\lambda_\de|^{-n/2} $, provided $|z|> \sqrt{|\de|}$,
       \item\label{lit:mm3}
         $\diam\mc C_{\de,n}^{0\pm} < K|\lambda_\de|^{-n/2}$,
     \end{enumerate}
    where $0<|\delta|<\eta$ and $\alpha=\arg\delta$.
\end{lem}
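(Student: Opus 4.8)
The plan is to transfer everything to the linearizing coordinate $\hat\Phi_\de$ near the fixed point $p_\de$ (equivalently $0$ for $\hat f_\de$), where the dynamics is exactly multiplication by $\la_\de$, and then control the distortion coming from $\hat\Phi_\de^{-1}$ using the estimates in \eqref{eq:PP} together with the location estimates from Lemma \ref{lem:|de|} and Lemma \ref{lem:sqrt|de|}. The key structural fact is the relation $f_\de^2(\mc C^{0\pm}_{\de,n}) = \mc C^{+2}_{\de,n-2}$ for $n\geq 2$, together with $f_\de(\mc C^{+2}_{\de,n}) = \mc C^{+2}_{\de,n-1}$ (this last from $f_\de(\mc C^{+2}_{\de,n})=\mc C^{+2}_{\de,n-1}$ via the inverse branch structure); iterating, $\mc C^{+2}_{\de,n}$ is roughly the $\hat\Phi_\de$-preimage of a fixed annulus-like piece scaled by $\la_\de^{-n}$, so it sits at distance comparable to $|\la_\de|^{-n}$ from $p_\de$. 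Since $\mc C^0_{\de,n}$ maps onto $\mc C^{+2}_{\de,n-2}$ under $f_\de^2$ and $f_\de$ near $0$ behaves like $z\mapsto z^2$ up to bounded factors (as $f_\de(z)=z^2-2+\de$ and $0$ maps near $-2$), a point in $\mc C^0_{\de,n}$ has $|f_\de^2(z)|$ — measured from $p_\de$ — comparable to $|z|^2$ times a bounded factor, giving $|z| \asymp |\la_\de|^{-n/2}$.

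Concretely, here is the order I would carry it out. First, fix a "seed" cylinder: show $\mc C^{+2}_{\de,0}=\mc J_{\de,+-}$ is contained in an annulus $K_1^{-1} \leq |z-p_\de| \leq K_1$ around $p_\de$, uniformly in $\de$ small and $\alpha$ fixed — this follows from Corollary \ref{cor:dH} (the Julia set is close to $[-2,2]$) plus the definition of $\mc J_{\de,+-}$ as the "left half" preimage, which stays a definite distance from $p_\de\approx 2$. Second, apply $\hat\Phi_\de$ (centered at $p_\de$): since $\hat\Phi_\de$ is bi-Lipschitz with constants near $1$ on $B(0,r_{\mathrm z})$ by \eqref{eq:PP}, and $\hat\Phi_\de\circ f_\de = \la_\de\hat\Phi_\de$ by \eqref{eq:pfp}, we get $\hat\Phi_\de(\mc C^{+2}_{\de,n}-p_\de) = \la_\de^{-n}\hat\Phi_\de(\mc C^{+2}_{\de,0}-p_\de)$ — wait, this only holds while the cylinders stay inside $B(0,r_{\mathrm z})$; so one must first pass to a tail $n\geq N_0$ where $|\la_\de|^{-n}$ is small enough, handle the finitely many $n<N_0$ by the continuity/compactness argument of Lemma \ref{lem:Cnu} (their diameters and positions converge to the $\de=0$ picture, where the bounds are explicit from \eqref{eq:(fn)'}), and for $n\geq N_0$ conclude $K_2^{-1}|\la_\de|^{-n} \leq |z-p_\de| \leq K_2|\la_\de|^{-n}$ and $\diam \mc C^{+2}_{\de,n} \leq K_2|\la_\de|^{-n}$. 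Third, pull back by $f_\de^2$: for $z\in\mc C^0_{\de,n}$, $w:=f_\de^2(z)\in\mc C^{+2}_{\de,n-2}$, and $f_\de^2(z) = (z^2-2+\de)^2 - 2+\de$; writing $w - p_\de$ in terms of $z$ and using that $0$ is mapped by $f_\de$ to a point near $-2=-p_0$ and then near $+2=p_0$, a Taylor expansion of $f_\de^2$ at $z=0$ gives $w - p_\de = c_\de z^2(1+O(z))$ with $|c_\de|$ bounded above and below uniformly (here $c_\de = (f_\de^2)''(0)/2$, nonzero since $0$ is not critical for $f_\de^2$ — the critical point $0$ of $f_\de$ maps to $-2+\de$ which is not $0$). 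Hence $|z|^2 \asymp |w-p_\de| \asymp |\la_\de|^{-(n-2)} \asymp |\la_\de|^{-n}$, which is \eqref{lit:mm1} and, on the range $|z|>\sqrt{|\de|}$ (where we stay away from the "rescaled" core near $0$ and the $O(z)$ distortion term is genuinely lower order), \eqref{lit:mm2}. For \eqref{lit:mm3}, $\diam \mc C^{0\pm}_{\de,n}$: since $f_\de^2$ restricted to $\mc C^{0+}_{\de,n}$ has derivative $\asymp |z| \asymp |\la_\de|^{-n/2}$ and maps onto $\mc C^{+2}_{\de,n-2}$ of diameter $\lesssim |\la_\de|^{-n}$, the diameter of the source is $\lesssim |\la_\de|^{-n}/|\la_\de|^{-n/2} = |\la_\de|^{-n/2}$, using bounded distortion of the inverse branch of $f_\de^2$ on that cylinder (which holds because the cylinder avoids the critical point $0$ — here the restriction to $\mc C^{0\pm}$ rather than all of $\mc C^0$ matters, since $\mc C^{0-}$ and $\mc C^{0+}$ lie on opposite sides of $0$).

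The main obstacle is making the distortion control near $0$ honest. The map $f_\de^2$ has $0$ among its critical points (since $0$ is critical for $f_\de$), so the inverse branch of $f_\de^2$ onto $\mc C^{0\pm}_{\de,n}$ is only well-behaved because each $\mc C^{0\pm}_{\de,n}$ lies strictly to one side of $0$; one needs a quantitative lower bound on $\dist(\mc C^{0\pm}_{\de,n}, 0)$ of the same order $|\la_\de|^{-n/2}$ as its own size, which is essentially the content of statement \eqref{lit:mm1} applied one level in — so there is a mild induction/bootstrap flavor, or alternatively one proves \eqref{lit:mm1} first by the direct $f_\de^2$-pullback argument above and then deduces the Koebe-type distortion bound needed for \eqref{lit:mm3}. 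The secondary subtlety is the uniformity in $\alpha$: the constants $r_{\mathrm z}$, $r_\vartriangle$ from Section \ref{sec:conj} are uniform in $\de$ small, so the linearization step is fine, but the "seed cylinder" step and the finitely-many-$n$ step rely on Lemma \ref{lem:Cnu} whose constant depends on $\alpha$ — acceptable since the Lemma only claims existence of $K,\eta$ depending on $\alpha$. The hypothesis $|z|>\sqrt{|\de|}$ in \eqref{lit:mm2} is exactly the threshold below which the cylinder may get close enough to $0$ that the $E_{1+\sqrt{|\de|}}$-confinement from Lemma \ref{lem:sqrt|de|} starts to distort the simple power law, so that restriction should be used precisely to cut off that regime.
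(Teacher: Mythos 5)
Your high-level plan (relate $\mc C^{0}_{\de,n}$ to $\mc C^{+2}_{\de,n-2}$ via the squaring dynamics, control $\mc C^{+2}_{\de,m}$ by the linearizing coordinate at $p_\de$, handle finitely many low levels by compactness, and get item \eqref{lit:mm3} by a distortion argument from item \eqref{lit:mm1}) is indeed the structure of the paper's proof. The gap is in the crucial step relating $|z|$ to $|w-p_\de|$.

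You write $w-p_\de = c_\de z^2\bigl(1+O(z)\bigr)$ with $c_\de=(f_\de^2)''(0)/2$, but this Taylor expansion drops the constant term: $f_\de^2(0)-p_\de = -\tfrac83\de + O(\de^2)$ is \emph{not} zero. The correct leading behaviour is $w-p_\de \approx -4\bigl(z^2+\tfrac23\de\bigr)$. This missing term is precisely the content of the lemma's asymmetry between \eqref{lit:mm1} and \eqref{lit:mm2}: near the preimages $\pm b_\de$ of $-p_\de$, where $z^2\approx-\tfrac23\de$, one has $|w-p_\de|\ll|z|^2$, so the upper bound $|z|^2\lesssim|w-p_\de|$ of \eqref{lit:mm2} genuinely fails, and the hypothesis $|z|>\sqrt{|\de|}$ is there exactly to force $|z^2+\tfrac23\de|\geq|z|^2-\tfrac23|\de|>\tfrac13|z|^2$. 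Your stated reason for that hypothesis (the $E_{1+\sqrt{|\de|}}$ confinement from Lemma \ref{lem:sqrt|de|} distorting a ``power law'', and an ``$O(z)$ distortion term'') is not the real mechanism. Similarly, \eqref{lit:mm1} from your expansion would read $|z|^2\gtrsim|w-p_\de|$, but with the correct term this needs the separate input $|z|^2\gtrsim|\de|$, which you never establish. The paper avoids the expansion entirely by using the identity $f_\de(z)-f_\de(0)=z^2$ and then invoking Lemma \ref{lem:|de|} to compare $|f_\de(z)-f_\de(0)|$ with $|f_\de(z)-(-p_\de)|$ — the upper comparison only under $|z|>\sqrt{|\de|}$, which is where the hypothesis of \eqref{lit:mm2} enters — before passing to the linearization one iterate later. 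Minor quibble: ``$0$ is not critical for $f_\de^2$'' is false (it is, since $f_\de'(0)=0$); what you need is that it is a nondegenerate critical point, which does follow from $f_\de(0)\neq 0$ as you note.
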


\begin{proof}
Fix $\alpha\in(0,2\pi)$ and let $\alpha=\arg \de$.
Let $n_0\in\N$ be the smallest number for which $f_0^2(\mc C^{0}_{0,n_0})\subset B(2,r_\textrm{z})$. Then, there exists $\eta>0$, such that $f_\de^2(\mc C^{0}_{\de,n_0})\subset B(2,r_\textrm{z})$, for $0<|\de|<\eta$.

\emph{Step 1.}
If $z\in\mc C^{0}_{\de,n}$, where $n\greq n_0$, then
   $$|f_\de(z)-f_\de(0)|=|z|^2.$$
Because $-p_\de\approx-2+\de/3$ (cf. (\ref{eq:1/27})), whereas $f_\de(0)=-2+\de$, using Lemma \ref{lem:|de|} we get
   $$ K_1^{-1} |f_\de(z)-(-p_\de)|<|f_\de(z)-f_\de(0)|,$$
for some $K_1>0$ (depending on $\alpha$). If $|z|>\sqrt{|\de|}$ then $f_\de(z)\notin B(-2+\de,|\de|)$, so changing $K_1$ if necessary, we obtain
   $$ |f_\de(z)-f_\de(0)|<K_1 |f_\de(z)-(-p_\de)|.$$
Therefore
\begin{equation}\label{eq:z}
   K_1^{-1}|f_\de(z)-(-p_\de)|<|z|^2< K_1|f_\de(z)-(-p_\de)|,
\end{equation}
where right-hand side inequality holds under the assumption $|z|>\sqrt{|\de|}$.

We have $f^{n-n_0+1}_\de(f_\de(z))\in f_\de^2(\mc C^{0}_{\de,n_0})$ and of course $f_\de^{n-n_0+1}(-p_\de)=p_\de$.
Since $f_\de$ is conjugated to $z\mapsto \la_\de z$ on $B(2,r_\textrm{z})$, we get
   $$|f^{n-n_0+2}_\de(z)-p_\de|\asymp |\lambda_\de|^{n-n_0+1}|f_\de(z)-(-p_\de)|.$$
The fact that $|f^{n-n_0+2}_\de(z)-p_\de|$ is separated from zero, leads to
   $$|\lambda_\de|^{n-n_0+1}|f_\de(z)-(-p_\de)|\asymp1, \quad\textrm{ hence }\quad |f_\de(z)-(-p_\de)|\asymp|\lambda_\de|^{-n}.$$
Thus, the first two statements for $n\greq n_0$ follow from (\ref{eq:z}).

\emph{Step 2.}
If $z\in\mc C^{0}_{\de,n}$, $n\greq n_0$ then the first statement gives us $|f_\de'(z)|>K_2^{-1}|\la_\de|^{-n/2}$, so we conclude that
   $$\diam f_\de(\mc C^{0}_{\de,n})=\diam f_\de(\mc C_{\de,n}^{0\pm})>K_3^{-1}|\la_\de|^{-n/2}\diam\mc C_{\de,n}^{0\pm}.$$
Since $f_\de^{n-n_0+1}(f_\de(\mc C^{0}_{\de,n}))= f_\de^2(\mc C^{0}_{\de,n_0})$, we obtain
   $$|\la_\de|^{n-n_0+1}\diam f_\de(\mc C^{0}_{\de,n})\asymp\diam f_\de^2(\mc C^{0}_{\de,n_0}).$$
Therefore
   $$\diam \mc C_{\de,n}^{0\pm}<K_4|\la_\de|^{-n/2+n_0-1}\diam f_\de^2(\mc C^{0}_{\de,n_0})<K_5|\la_\de|^{-n/2}, $$
so all three statements hold for $n\greq n_0$, and some constant $K>0$.

Next, possibly changing $K$, we can assume that the statements hold for $n\greq0$.
\end{proof}

\section{The Julia set close to 0}\label{sec:close}

In order to compute the integral from the numerator of the formula (\ref{eq:d}), we need precise information on the location of the Julia set close to $0$. Corollary \ref{cor:dH} tells us nothing precise for $\de\neq0$, so we need to change scale in order to see the details. Because preimages of the point $-p_\de$ are close to $\pm\frac{\sqrt6}{3}\sqrt{-\de}$, we change scale by the factor $1/\sqrt{|\de|}$ in every direction. Thus, let us write
$$\mc J^\divideontimes_\delta:=\frac{1}{\sqrt
{|\delta|}}\mc J_\delta.$$
Next, we define small neighborhoods of $0$, and its rescaled version, as follows:
\begin{equation*}
\begin{array}{ll}
      \mc N_{\delta,R}:=\mc J_\delta\cap \{z\in\C:|\re z|\leeq R\sqrt{|\de|}\}, \\
      \mc N^\divideontimes_{\delta,R}:=\mc J^\divideontimes_\delta\cap \{z\in\C:|\re z|\leeq R\},
\end{array}
\end{equation*}
where $R\greq1$. So, we have $\mc N_{\delta,R}=\sqrt{|\de|}\mc N^\divideontimes_{\delta,R}$. Write $N_R:=\vp_\de^{-1}(\mc N_{\delta,R})\subset \mc J_{\de_0}$.

We denote by $b_\de$ one of the points being close to a preimage of $-p_\de$, namely
$$ b_\de:=\frac{\sqrt6}{3}\sqrt{-\de}=\frac{\sqrt6}{3}\sqrt{|\de|}\sin\frac\alpha2-i\frac{\sqrt6}{3}\sqrt{|\de|}\cos\frac\alpha2.$$
Indeed $f_\de(b_\de)=-2+\frac13\,\de\approx-p_\de$ (cf. (\ref{eq:1/27})).

We need to define the following sets:
$$H_{\de,R}:\left\{ \begin{array}{ll}
xy=-\frac13\im\delta=-\frac13|\de|\sin\alpha,\\
\frac{\sqrt6}{3}\sqrt{|\de|}\,\sin\frac\alpha2\leeq|x|\leeq R\sqrt{|\de|},
\end{array} \right.$$
where $x=\re z$, $y=\im z$, $\de\neq0$, $\arg\de=\alpha\in(0,2\pi)$, and $R\greq1$.
Branches of $H_{\de,R}$ included in left and right half-planes will be denoted by $H^-_{\de,R}$ and $H^+_{\de,R}$ respectively. The endpoints of $H^+_\de$ are $b_\de$, and the point which will be denoted by $z_{\de,R}$ (i.e. $\re z_{\de,R}=R\sqrt{|\de|}$).

After substitution $\sqrt{|\de|}x$, $\sqrt{|\de|}y$ in place of $x$, $y$, for $\alpha\in(0,2\pi)$ we obtain
$$H^\divideontimes_{\alpha,R}:\left\{ \begin{array}{lll}
xy=-\frac13\sin\alpha,\\
\frac{\sqrt{6}}{3}\,\sin\frac\alpha2\leeq |x|\leeq R.
\end{array} \right.$$
Thus we have $H_{\de,R}=\sqrt{|\de|}H^\divideontimes_{\alpha,R}$, where $\alpha=\arg\de$. We define $H^{\divideontimes,-}_{\alpha,R}$ and $H^{\divideontimes,+}_{\alpha,R}$ analogously as $H_{\de,R}^\pm$. The endpoints of $H^{\divideontimes,+}_{\alpha,R}$ are
\begin{equation}\label{eq:endp}
   b^{\divideontimes}_\alpha:=\frac{1}{\sqrt{|\de|}}\,b_\de=\frac{\sqrt6}{3}\sin\frac\alpha2-i\frac{\sqrt6}{3}\cos\frac\alpha2, \quad z_{\alpha,R}^\dt:=R-i\frac{1}{3R}\sin\alpha.
\end{equation}
Note that $\arg (z_{\alpha,R}^\dt)=\arg (z_{\de,R})=-\gamma_{\alpha,R}$, where
\begin{equation}\label{eq:gamma}
\gamma_{\alpha,R}=\arctan\Big(\frac{1}{3R^2}\sin\alpha\Big).
\end{equation}
Hence, $\gamma_{\alpha,R}>0$ for $\alpha\in(0,\pi)$.

\begin{figure}[!ht]
   \centering
        \includegraphics[scale=0.65]{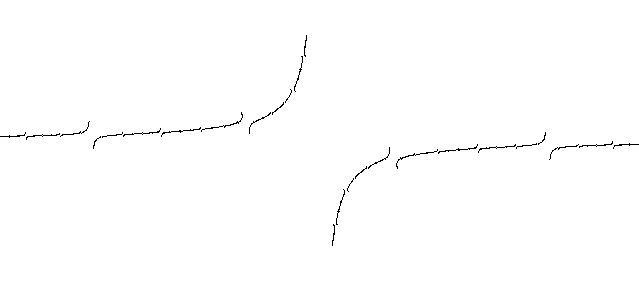}
        \caption{The Julia set $\mc J_\de$ for $\de=\frac{1}{25}+\frac{1}{100}i$, close to $0$.}
\end{figure}

%Later on we will also consider measures defined on $\mc J^\divideontimes_\delta$ and $\mc N^\divideontimes_{\delta,R}$. Thus let us define
%$$\omega^\divideontimes_\delta(A):=\frac{1}{\sqrt
%{|\delta|}}\,\omega_\de(\sqrt{|\de|}A),\quad \mu^\divideontimes_\delta(A):=\frac{1}{\sqrt
%{|\delta|}}\,\mu_\de(\sqrt{|\de|}A),$$
%measures on the set $\mc J^\divideontimes_\delta$, and
%$$\omega^\divideontimes_\delta\big|_{\mc J^\divideontimes_{\delta,R}}=:\omega^\divideontimes_{\delta,R},\quad\quad \mu^\divideontimes_\delta\big|_{\mc J^\divideontimes_{\delta,R}}=:\mu^\divideontimes_{\delta,R},$$
%measures on the set $\mc N^\divideontimes_{\delta,R}$.

Now we prove the following:

\begin{prop}\label{prop:Hmetric}
   For every $\alpha\in(0,2\pi)$ and $R\greq1$, if $\delta\rightarrow0$ where $\alpha=\arg\delta$, then
      $$\mc N^\divideontimes_{\delta,R}\rightarrow H^\divideontimes_{\alpha,R},$$
   in the space of non-empty compact subsets of $\C$ equipped with the Hausdorff metric.
\end{prop}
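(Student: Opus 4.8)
The plan is to show that $\mc N^\divideontimes_{\delta,R}$ converges to $H^\divideontimes_{\alpha,R}$ by tracking the Julia set near $0$ through the semiconjugacy with $z\mapsto\lambda_\de z$ near the fixed point $-p_\de$, and pulling this back one step via $f_\de$. The key geometric observation is that $f_\de$ maps the strip $\{|\re z|\leeq R\sqrt{|\de|}\}$ (suitably shrunk) to a neighborhood of $f_\de(0)=-2+\de$, which is $|\de|/3$-close to $-p_\de$; and the equation $xy=-\tfrac13\im\de$ is precisely the (leading-order) preimage under the quadratic map $z\mapsto z^2$ of the horizontal line $\{\im w=\im\de\}=\{\im(f_\de(z))\text{ constant}\}$, since $\im(z^2)=2xy$. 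So $H_{\de,R}$ (equivalently, after rescaling, $H^\divideontimes_{\alpha,R}$) is the $f_\de$-preimage of the relevant level set describing where $\mc J_\de$ sits near $-p_\de$.

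The steps I would carry out, in order. First, I would establish the analogue of Lemma~\ref{lem:|de|}\eqref{lit:|de|1} in the rescaled picture: for $z\in\mc J_\de$ with $\re z$ close to $-\re p_\de$, one has $\im z\approx\im p_\de\approx\tfrac13\im\de$, with error $o(|\de|)$; combined with Lemma~\ref{lem:sqrt|de|} (which confines $\mc J_\de$ to $|\im z|\leeq2\sqrt{|\de|}$) and the fact that near $-p_\de$ the conjugacy $\Phi_\de$ to the linear map $w\mapsto\lambda_\de w$ is close to the identity (estimate \eqref{eq:PP}), this pins down $f_\de(\mc N_{\delta,R})$, for $|z|\gtrsim\sqrt{|\de|}$, to within $o(|\de|)$ of the segment $\{-2+\de+s:s\in\R^-,\ |s|\ \text{bounded}\}$, i.e. to the horizontal line $\im w=\im\de$. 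Second, I would solve $f_\de(z)=w$ for $z$ in the strip $|\re z|\leeq R\sqrt{|\de|}$: writing $z=x+iy$ with $|x|,|y|=O(\sqrt{|\de|})$, the relation $z^2 = w+2-\de$ with $\im w=\im\de$ gives $2xy=\im(z^2)=\im(w+2-\de)=0$ at leading order — but here I must be careful: the precise level is $\im(f_\de(z)) = \im\de$, so $\im(z^2-2+\de)=\im\de$, hence $2xy = 0$??? — no: $\im(z^2) + \im\de = \im\de$ forces $2xy=0$, which is wrong, so the correct bookkeeping is that $\mc J_\de$ near $-p_\de$ has $\im z - \im p_\de$ of size $o(|\de|)$ but the \emph{relevant} horizontal coordinate is measured from $\im p_\de$, giving $\im(z^2) = \im(f_\de(z)) - \im\de + (\text{shift from }p_\de\ \text{vs}\ f_\de(0)) = -\tfrac23\im\de + o(|\de|)$, so $2xy = -\tfrac23\im\de$, i.e. $xy = -\tfrac13\im\de$, which is exactly the defining equation of $H_{\de,R}$. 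I would present this computation cleanly rather than as above. Third, I would handle the inner cutoff $|x|\geeq\tfrac{\sqrt6}{3}\sqrt{|\de|}\sin\tfrac\alpha2$: this is the constraint coming from the endpoint $b_\de$, which is the actual $f_\de$-preimage of $-2+\tfrac13\de\approx-p_\de$; points of $\mc J_\de$ with $\re z$ nearer to $0$ than $|\re b_\de|$ belong to the \emph{other} side and are controlled separately, or alternatively one shows directly (as in the proof of Lemma~\ref{lem:Cnu}) that $f_{\de}$ of such points lands strictly inside the Julia-set-free region near $-p_\de$. The outer cutoff $|x|\leeq R\sqrt{|\de|}$ is just the definition of $\mc N_{\delta,R}$, matching $|x|\leeq R$ after rescaling.

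Finally, to conclude Hausdorff convergence I would prove the two inclusions that make up $d_H$. For $\sup_{z\in\mc N^\divideontimes_{\delta,R}}\dist(z,H^\divideontimes_{\alpha,R})\to0$: every $z\in\mc N_{\delta,R}$ with $|z|\gtrsim\sqrt{|\de|}$ satisfies, by the above, $\re z\cdot\im z = -\tfrac13\im\de + o(|\de|)$ and $|\re z|$ in the allowed range, so after rescaling it is $o(1)$-close to $H^\divideontimes_{\alpha,R}$; the finitely many remaining scales $|z|\lesssim\sqrt{|\de|}$ near $0$ are handled by noting $H^\divideontimes_{\alpha,R}$ already contains its endpoints $b^\divideontimes_\alpha$ and the hyperbola is within $o(1)$ of the rescaled set there too. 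For the reverse, $\sup_{w\in H^\divideontimes_{\alpha,R}}\dist(w,\mc N^\divideontimes_{\delta,R})\to0$: given $w\in H^\divideontimes_{\alpha,R}$, I produce a genuine Julia-set point nearby by using that $f_\de(\sqrt{|\de|}\,w)$ is $o(|\de|)$-close to a point of $\mc J_\de$ near $-p_\de$ (from Corollary~\ref{cor:dH} applied after one forward iterate, together with Lemma~\ref{lem:|de|}), and then taking the appropriate inverse branch $f_{\de,\pm}^{-1}$, which is a near-isometry at the relevant scale because $|f_\de'(z)|=2|z|\asymp\sqrt{|\de|}$ there. The main obstacle I anticipate is the uniformity in the inner region $|x|\approx\tfrac{\sqrt6}{3}\sqrt{|\de|}\sin\tfrac\alpha2$: near the endpoint $b_\de$ the derivative of $f_\de$ degenerates relative to $\sqrt{|\de|}$ only mildly, but one must check that the $o(|\de|)$ errors in the location of $\mc J_\de$ near $-p_\de$ (which are $|\de|^{17/16}$, hence $o(|\de|)$) translate, after dividing by the derivative $\asymp\sqrt{|\de|}$ and rescaling by $1/\sqrt{|\de|}$, into errors that are $o(1)$ uniformly — which they do, since $|\de|^{17/16}/|\de| = |\de|^{1/16}\to0$. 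Establishing this uniform control on both inclusions, and making the "$f_\de$ of the strip is the horizontal level set" heuristic rigorous with explicit error terms, is where the real work lies; everything else is a routine assembly of Lemmas~\ref{lem:sqrt|de|}, \ref{lem:|de|}, \ref{lem:mm} and Corollary~\ref{cor:dH}.
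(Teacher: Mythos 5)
Your first inclusion (every point of $\mc N^\dt_{\de,R}$ is near $H^\dt_{\alpha,R}$) is essentially the paper's Step~1: you pin down $\im f_\de(z)$ using Lemma~\ref{lem:|de|}\eqref{lit:|de|1} (the paper uses the curves $l_\de^\pm:xy=-\tfrac13\im\de\pm|\de|^{17/16}$, which are your ``$o(|\de|)$ error''), you handle the outer cutoff by definition, and the inner cutoff via Lemma~\ref{lem:|de|}\eqref{lit:|de|2} and the preimages of $-p_\de$ near $\pm b_\de$. This matches the paper in substance, and the computation $2xy=\im(z^2)=\im f_\de(z)-\im\de\approx-\im p_\de-\im\de\approx-\tfrac23\im\de$ is correct after your self-correction.

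The gap is in the reverse inclusion. You assert that $f_\de(\sqrt{|\de|}w)$ is $o(|\de|)$-close to a point of $\mc J_\de$ near $-p_\de$, justified by ``Corollary~\ref{cor:dH} applied after one forward iterate, together with Lemma~\ref{lem:|de|}.'' Neither ingredient delivers that. Corollary~\ref{cor:dH} only gives $d_H(\mc J_\de,[-2,2])=o(1)$; it says nothing about density of $\mc J_\de$ at the much finer scale $|\de|$ near $-p_\de$. Lemma~\ref{lem:|de|} only pins the \emph{imaginary} part of $\mc J_\de$ near $-p_\de$ to within $|\de|^{17/16}$; it gives no information about \emph{horizontal} density. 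A single forward iterate moves you from scale $\sqrt{|\de|}$ near $0$ to scale $|\de|$ near $-p_\de$, but that is the scale at which you need the density statement, so one iterate does not help. What the paper actually does is the opposite: it argues by contradiction. If a ball $B(z_0,r_0)$ (fixed $r_0$) misses $\mc N^\dt_{\de_n,R}$ along a sequence $\de_n\rightarrow0$, then pushing forward once by $f_{\de_n}$ and using Koebe gives a disc of radius $\asymp r_0|\de_n|$ near $-p_{\de_n}$ disjoint from $\mc J_{\de_n}$; combining with the vertical pinning of Lemma~\ref{lem:|de|}\eqref{lit:|de|1} upgrades this to a full vertical strip $A_{\alpha,n}$ of width $\asymp r_1|\de_n|$ disjoint from $\mc J_{\de_n}$; one then iterates $f_{\de_n}$ forward on the order of $\log_{|\la_{\de_n}|}(1/|\de_n|)$ times, with bounded distortion (the orbit stays in the linearizing neighborhood of $p_{\de_n}$ and away from the critical point, cf.\ Proposition~\ref{prop:P}), which magnifies the strip to a gap of diameter bounded away from $0$, contradicting Corollary~\ref{cor:dH}. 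The long forward iteration with distortion control is the essential mechanism, and it is absent from your argument; ``one forward iterate'' cannot replace it. You also do not address why the putative gap is a genuine gap (i.e.\ that the rescaled Julia set lies on both sides of $B(z_0,r_0)$), which the paper handles by observing that the rescaled preimages of $f_\de^{-1}(-p_\de)$ sit within $O(|\de|)$ of $\pm b^\dt_\alpha$.
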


\begin{proof}
Fix $\alpha\in(0,2\pi)$ and $R\greq1$.

\emph{Step 1.}
First we will prove that
\begin{equation}\label{eq:d1}
    \lim_{\de\rightarrow0}\sup_{z\in\mc N^\divideontimes_{\delta,R}}\dist(z,H^\divideontimes_{\alpha,R})=0,
\end{equation}
where $\alpha=\arg\de$.

If $z=x+iy$, then we define
$$l_\de^{+}:xy=-\frac13\im\de+|\de|^{17/16},\quad l_\de^{-}:xy=-\frac13\im\de-|\de|^{17/16}.$$
The images of $l_\de^{\pm}$ are horizontal lines such that
$$\im(f_\de(l_\de^\pm)\big)=\frac13\im\de\pm2|\de|^{17/16}.$$

The set $|\re z|\leeq R\sqrt{|\de|}$ is mapped into the half-plane $\re z\leeq-2+R^2|\de|+\re\de$. Since $-p_\de=-2+\de/3+O(\de^2)$ (cf, (\ref{eq:1/27})), we conclude from Lemma \ref{lem:|de|} (\ref{lit:|de|1}) that the set $\mc N_{\de,R}$ lies between the lines $l_\de^\pm$.

Moreover $\mc N_{\de,R}$ is bounded by $f_{\de}^{-1}(k_\de)$ where $k_\de:\re z=-\re p_\de-|\de|^2$ (see Lemma \ref{lem:|de|} (\ref{lit:|de|2})). Since $f_\de(b_\de)=-2+\de/3$, (\ref{eq:1/27}) leads to $|-p_\de-f_\de(b_\de)|\approx|\de|^2/27$, whereas the critical value is equal to $-2+\de$. Thus we see that the distance between $f_{\de}^{-1}(k_\de)$ and the points $\pm b_\de$ is of order $|\de|^{3/2}$.

So, the set $\mc N_{\de,R}^\divideontimes$ lies between the curves
$$xy=-\frac13\sin\alpha\pm|\de|^{1/16},$$
and rescaled curves $f_\de^{-1}(k_\de)$ whose distance from $\pm b^\divideontimes_\alpha$ is of order $|\de|$. Thus (\ref{eq:d1}) holds.

\emph{Step 2.}
Now we will prove that
\begin{equation}\label{eq:d2}
    \lim_{\de\rightarrow0}\sup_{z\in H^\divideontimes_{\alpha,R}}\dist(z,\mc N^\divideontimes_{\delta,R})=0.
\end{equation}
Suppose that, on the contrary, there is $z_0\in H^\divideontimes_{\alpha,R}$, $r_0>0$, and a sequence $\de_n\rightarrow0$ such that $B(z_0,r)\cap \mc N^\divideontimes_{\delta_n,R}=\emptyset$. Note that the distances between $\pm b^\divideontimes_\alpha$ and rescaled preimages of $f_\de^{-1}(-p_\de)$ are of order $|\de|$, so the rescaled Julia set is located on ''both sides'' of $B(z_0,r_0)$. We can assume that $0\notin B(z_0,r_0)$.

Next, we have $B(\sqrt{|\de_n|}z_0,\sqrt{|\de_n|}r_0)\cap \mc N_{\delta_n,R}=\emptyset$, and then
$$ f_{\de_n}\big(B(\sqrt{|\de_n|}z_0,\sqrt{|\de_n|}r_0)\big)\cap \mc J_{\delta_n}=\emptyset.$$
Because $f_{\de_n}'(\sqrt{|\de_n|}z_0)=\kappa\sqrt{|\de_n|}$, for some $\kappa\in\C\sms\{0\}$, Koebe one-quarter Theorem leads to
   $$B\big(f_{\de_n}(\sqrt{|\de_n|}z_0),\kappa r_0|\de_n|/4\big)\subset f_{\de_n}\big(B(\sqrt{|\de_n|}z_0,\sqrt{|\de_n|}r_0)\big).$$
Since $\im f_{\de_n}(\sqrt{|\de_n|}z_0)=\im \de_n/3$, the point $f_\de(\sqrt{|\de_n|}z_0)$ lies between the lines $\im z=-\im p_{\de_n}\pm|\de_n|^{17/16}$. So, we conclude from Lemma \ref{lem:|de|} (\ref{lit:|de|1}) that there exists $0<r_1\leeq\kappa r_0/4$, such that
   $$A_{\alpha,n}:=\{z\in\C:|\re (z-f_{\de_n}(\sqrt{|\de_n|}z_0))|\leeq r_1|\de_n|\}\cap \mc J_{\delta_n}=\emptyset.$$
The diameter of the set $f_{\de_n}(\mc N_{\de_n,R})$ is close to $R^2|\de_n|$, whereas  $f_{\de_n}(\mc N_{\de_n,R})$ can be mapped with bounded distortion onto a set of diameter grater than $r_{\textrm{z}}/5$. So, the ''width'' of the images of $A_{\alpha,n}$ is separated from $0$. This contradicts Corollary \ref{cor:dH}. Therefore (\ref{eq:d2}) holds, and the statement follows.
\end{proof}

\section{Conformal and Invariant measures}\label{sec:measures}

Recall from Section \ref{sec:formalism}, that $\omega_\de$ (after normalization) is the conformal measure with exponent $d(\de)$ (cf. (\ref{eq:SP})), whereas $\mu_\de$ is the $f_\de$-invariant measure, equivalent to $\omega_\de$. We assume that $\mu_\delta(\mc J_\de)=\omega_\delta(\mc J_\de)=4$.

Let us define the rescaled measures as follows:
$$\omega^\divideontimes_\delta(A):=\frac{1}{{\sqrt
{|\delta|}}^{d(\de)}}\,\omega_\de(\sqrt{|\de|}A),\quad\textrm{and}\quad \mu^\divideontimes_\delta(A):=\frac{1}{{\sqrt
{|\delta|}}^{d(\de)}}\,\mu_\de(\sqrt{|\de|}A).$$
Hence, $\omega^\divideontimes_\delta$ and $\mu^\divideontimes_\delta$ are supported on $\mc J^\divideontimes_\delta$.

The Main result of this Section is Proposition \ref{prop:omega}, which concerns the limit of $\omega^\divideontimes_\delta$ (so also $\mu^\divideontimes_\delta$) when $\de\rightarrow0$.
But first, we will deal with the limits of $\omega_\de$ and $\mu_\de$.

For $\de=0$ situation is as follows: The measure $\omega_0$ is simply the Lebesgue measure $\la$. Next, we know that the map $\vp_v(z)=(\frac4\pi)\arcsin(\frac{z}{2})$ conjugates $f_0$ to $V$ (see Section \ref{sse:ce}). Because the Lebesgue measure is $V$-invariant, the measure defined by $\vp_v^{*}\la(A)=\la(\vp_v(A))$ is $f_0$-invariant with density
\begin{equation}\label{eq:m0o}
\frac{d\mu_0}{d\omega_0}(z)=\frac2\pi\frac{1}{\sqrt{1-(z/2)^2}},
\end{equation}
(cf. \cite[Chapter V]{MS}).

First fact follows from \cite[Section 5.1]{RL}:%\cite[Theorem 11.2]{Mii} that:

\begin{prop}\label{prop:o}
For every $\alpha\in(0,2\pi)$ the measure $\omega_0$ is equal to weak* limit of $\omega_\delta$, where $\delta\rightarrow0$ and $\alpha=\arg\delta$.
\end{prop}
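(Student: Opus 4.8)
The plan is to exploit the fact that the conformal measure $\omega_\de$ is (up to the choice of normalization $\omega_\de(\mc J_\de)=4$) canonically attached to the expanding map $f_\de$, together with the uniform convergence $\vp_\de\rightrightarrows\vp_0$ on $\mc J_{\de_0}$ established in Proposition \ref{prop:uniform}. Recall that $\omega_\de=(\vp_\de)_*\tilde\omega_\de$, where $\tilde\omega_\de=\tilde\omega_{\phi_\de}$ is the eigenmeasure of $\mathcal L_{\phi_\de}^*$ on the fixed set $\mc J_{\de_0}$, with $\phi_\de=-d(\de)\log|f'_\de(\vp_\de)|$. So the first step is to show that the potentials $\phi_\de$ converge uniformly (in fact in a H\"older sense that is stable under the transfer-operator machinery) to $\phi_0=-\log|f'_0(\vp_0)|$ as $\de\to0$ along $\mc R(\alpha)$: this uses $d(\de)\to d(0)=1$ (Theorem R-L), the uniform convergence $\vp_\de\rightrightarrows\vp_0$, and the fact that $|f'_\de(\vp_\de(s))|=2|\vp_\de(s)|$ stays bounded away from $0$ and $\infty$ on $\mc J_{\de_0}$ (which follows since $\vp_\de(s)$ stays away from $0$, the critical point not being in the Julia set for these $\de$). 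Continuity of the eigendata $(\beta_\de,\tilde h_{\phi_\de},\tilde\omega_{\phi_\de})$ of the Perron--Frobenius--Ruelle operator under such perturbations of the potential is standard (this is exactly the content of \cite[Section 5.1]{RL} cited in the statement); it gives $\tilde\omega_\de\to\tilde\omega_0$ weak*. Pushing forward by $\vp_\de$ and using $\vp_\de\rightrightarrows\vp_0$ one more time then yields $\omega_\de=(\vp_\de)_*\tilde\omega_\de\to(\vp_0)_*\tilde\omega_0=\omega_0$ weak*, since for $g\in C^0(\C)$ one has $\int g\,d\omega_\de=\int g\circ\vp_\de\,d\tilde\omega_\de$ and $g\circ\vp_\de\to g\circ\vp_0$ uniformly on $\mc J_{\de_0}$.

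The remaining point is the identification of the limit: one must check that $(\vp_0)_*\tilde\omega_0$ is indeed $\omega_0=$ Lebesgue measure on $[-2,2]$ with total mass $4$. For $\de=0$ the map $\vp_0$ semiconjugates $f_{\de_0}$ to $f_0$ (a genuine conjugacy off the countable set of iterated preimages of $0$), so $(\vp_0)_*\tilde\omega_0$ is the conformal measure with exponent $d(0)=1$ for $f_0$ on $[-2,2]$; but the $1$-conformal measure for $f_0$ is forced by the conformality relation (\ref{eq:SP}) with $d(0)=1$ together with $\mc J_0=[-2,2]$ to be a constant multiple of Lebesgue measure, and the normalization $\omega_0([-2,2])=4$ pins down the constant. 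This matches the description already recorded in the paper, namely $\omega_0=\la$.

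The step I expect to be the genuine obstacle is the second half of Step 1 — verifying that the convergence $\phi_\de\to\phi_0$ is strong enough, and in a function space robust enough, that the spectral/eigenmeasure data of $\mathcal L_{\phi_\de}$ depend continuously on $\de$ at $\de=0$. The subtlety is that $\de=0$ sits on $\partial\mc M$: $f_0$ is only semiconjugate (not conjugate) to $f_{\de_0}$, $\vp_0$ is merely H\"older, and the uniform estimate on the cylinder diameters $\sup_\nu\diam\mc C_{0,\nu}\to0$ is what one leans on to get H\"older control of $\phi_0$ and hence a gap in the spectrum of $\mathcal L_{\phi_0}$ acting on an appropriate H\"older space. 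Once one knows the family $\de\mapsto\mathcal L_{\phi_\de}$ is continuous into the space of bounded operators on that fixed H\"older space and that $1$ is an isolated simple eigenvalue for every member (true by Perron--Frobenius--Ruelle, with $\beta_\de=e^{P(T,\phi_\de)}=1$ since $d(\de)$ is the dimension), analytic/continuous perturbation theory for the dual eigenprojection gives $\tilde\omega_\de\to\tilde\omega_0$ weak* and the rest is routine. Since this continuity is precisely what \cite[Section 5.1]{RL} supplies, I would quote it and restrict the write-up to checking its hypotheses (uniform expansion on $\mc J_{\de_0}$, $d(\de)\to1$, $\vp_\de\rightrightarrows\vp_0$) in the present setting.
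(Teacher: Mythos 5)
Your high-level structure ultimately matches the paper's: the paper gives no proof and simply cites \cite[Section 5.1]{RL}, and you also end by deferring to that reference. However, the preparatory argument you build around the citation contains a genuine error, and the error lies precisely at the point that makes this statement nontrivial.

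You claim that $|f'_\de(\vp_\de(s))|=2|\vp_\de(s)|$ ``stays bounded away from $0$ and $\infty$ on $\mc J_{\de_0}$ (which follows since $\vp_\de(s)$ stays away from $0$, the critical point not being in the Julia set for these $\de$).'' This is true for each individual $\de\notin\mc M$, but it is emphatically not uniform as $\de\to0$, and at $\de=0$ it is false outright: $0\in\mc J_0=[-2,2]$, so there is $s_0\in\mc J_{\de_0}$ with $\vp_0(s_0)=0$, and $\phi_0=-\log(2|\vp_0|)$ takes the value $+\infty$ there (and is unbounded above on the dense set of $\vp_0$-preimages of the iterated preimages of $0$). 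Correspondingly, by Lemma \ref{lem:mm}(\ref{lit:mm1}) and Corollary \ref{cor:dH}, points of $\mc J_\de$ come within distance of order $\sqrt{|\de|}$ of the origin, so $\sup_s|\phi_\de(s)|\to\infty$. Thus $\phi_\de$ does not converge to $\phi_0$ uniformly, $\phi_0$ is not even a bounded (let alone H\"older) function on $\mc J_{\de_0}$, and the proposed route via continuity of the Perron--Frobenius--Ruelle eigendata on a fixed H\"older space with a spectral gap does not apply. You sense a difficulty in your closing paragraph, but you misdiagnose it: the obstruction is not merely that $\vp_0$ is only H\"older and only a semiconjugacy; it is that the potential itself blows up, because $\de=0$ is a Misiurewicz (hence non-hyperbolic) parameter and the critical point lies in $\mc J_0$. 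This degeneracy is exactly what \cite{RL} is built to handle, by working directly with the geometry of the Julia sets and conformal measures near a semi-hyperbolic parameter rather than by perturbing a transfer operator on the base. So the ``checking the hypotheses'' program you outline cannot be carried out as written, because one of those hypotheses (uniform lower bound on $|\vp_\de|$) is false. The correct move, which is what the paper does, is to cite \cite[Section 5.1]{RL} as supplying the full convergence $\omega_\de\to\omega_0$ without attempting to reduce it to a spectral-gap perturbation argument; the identification $\omega_0=\la$ on $[-2,2]$ (normalized to mass $4$) in your final paragraph is fine and agrees with the paper.
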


Now we prove:

\begin{prop}\label{prop:mu}
For every $\alpha\in(0,2\pi)$ the measure $\mu_0$ is equal to weak* limit of $\mu_\delta$, where $\delta\rightarrow0$ and $\alpha=\arg\delta$.
\end{prop}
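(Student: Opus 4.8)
The strategy is to upgrade the weak* convergence $\omega_\de\to\omega_0$ from Proposition \ref{prop:o} to weak* convergence of the invariant measures $\mu_\de\to\mu_0$ by controlling the densities $\frac{d\mu_\de}{d\omega_\de}=\tilde h_{\phi_\de}\circ\vp_\de^{-1}$ (transported to $\mc J_\de$). Recall that $\mu_\de=\tilde h_{\phi_\de}\,\tilde\omega_{\phi_\de}$ pushed forward by $\vp_\de$, where $\tilde h_{\phi_\de}$ is the Perron--Frobenius--Ruelle eigenfunction of $\mathcal L_{\phi_\de}$ on $X=\mc J_{\de_0}$. The first step is to establish, away from the ``bad'' part of the Julia set near $0$, that $\tilde h_{\phi_\de}\circ\vp_\de^{-1}$ converges uniformly to the density \eqref{eq:m0o}, and that on the whole Julia set these densities are controlled well enough (in an $L^1(\omega_\de)$ sense) that the near-$0$ part contributes nothing in the limit.

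\textbf{Key steps.} First I would recall that $d(\de)\to d(0)=1$ (Theorem R-L), so $\phi_\de=-d(\de)\log|f_\de'(\vp_\de)|\to -\log|f_0'(\vp_0)|$, and combine this with the uniform convergence $\vp_\de\rightrightarrows\vp_0$ on $\mc J_{\de_0}$ from Proposition \ref{prop:uniform}; however, $\log|f_0'(\vp_0)|$ blows up at the preimages of $\pm2$ (where $f_0'=0$ is not the issue --- rather $f_0'(\pm2)\ne0$, but the conjugacy $\vp_0$ is not Lipschitz near $\pm2$, and more to the point the potential has a logarithmic singularity at $0=\vp_0^{-1}(0)$ since $f_\de'(0)=0$). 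So the second step is to localize: fix $N$ large and split $\mc J_{\de_0}=M_N^0\cup(\mc J_{\de_0}\sms M_N^0)$. On the compact piece $\mc J_{\de_0}\sms M_N^0$, which stays bounded away from $0$, the potentials $\phi_\de$ converge uniformly to $\phi_0$ in the Hölder norm (using that on this set $|f_\de'(\vp_\de)|$ is bounded below away from $0$ and the conjugacies converge nicely), and standard stability of the Ruelle operator / its leading eigendata under uniform perturbation of Hölder potentials (as in \cite{PU} or \cite{Z}) gives $\tilde h_{\phi_\de}\to\tilde h_{\phi_0}$ uniformly on that piece. The third step is to control the contribution of $M_N^0$: using Lemma \ref{lem:mm} and the conformality relation \eqref{eq:SP}, together with $d(\de)\to1$, one shows $\omega_\de(\mc M_{\de,N}^0)=\omega_\de(\vp_\de(M_N^0))$ is small uniformly in $\de$ for $N$ large (this is essentially a tail estimate: the cylinder $\mc C_{\de,n}^0$ has $\omega_\de$-mass comparable to $|\lambda_\de|^{-nd(\de)/2}$ times a bounded factor, which is summable), and that $\int_{M_N^0}\tilde h_{\phi_\de}\,d\tilde\omega_\de=\mu_\de(\mc M_{\de,N}^0)$ is correspondingly small, using the bounded-distortion / Koebe control of the $\tilde h$ together with the fact that $\mu_\de$ is a finite measure with total mass $4$. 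Finally, for a test function $g\in C^0(\C)$, write $\int g\,d\mu_\de=\int_{\mc J_{\de_0}}(g\circ\vp_\de)\,\tilde h_{\phi_\de}\,d\tilde\omega_\de$, split at $M_N^0$, pass to the limit on the good piece using Step 2 plus Proposition \ref{prop:o} plus $\vp_\de\rightrightarrows\vp_0$, bound the bad piece by $\|g\|_\infty\cdot\mu_\de(\mc M_{\de,N}^0)$ uniformly, and then let $N\to\infty$; the limit is $\int_{[-2,2]}g\,d\mu_0$ by \eqref{eq:m0o}.

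\textbf{Main obstacle.} The delicate point is Step 2 combined with the uniform-in-$\de$ tail bound of Step 3: one must know that the eigenfunctions $\tilde h_{\phi_\de}$ do not develop a concentration (a growing spike) near the preimages of $0$ as $\de\to0$, i.e. that $\sup_\de\int_{M_N^0}\tilde h_{\phi_\de}\,d\tilde\omega_\de\to0$ as $N\to\infty$. Equivalently, $\mu_\de(\mc M_{\de,N}^0)$ must be small uniformly in $\de$. This is where the rescaling machinery and Lemma \ref{lem:mm} really earn their keep: $\mu_\de$ being $f_\de$-invariant and equivalent to the conformal measure, its mass on $\mc C_{\de,n}^0$ transfers under $f_\de^2$ (which expands by roughly $|\lambda_\de|^{n}$, cf. Lemma \ref{lem:mm}) to mass on $\mc C_{\de,n-2}^{+2}$, a fixed-sized neighborhood of $p_\de$, with bounded distortion; invariance then bounds $\mu_\de(\mc C_{\de,n}^0)$ by a geometric factor times the total mass, so the tail $\sum_{n\ge N}$ is uniformly small. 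I expect the bookkeeping here --- keeping all Koebe/distortion constants uniform in $\de$ near the critical value $f_\de(0)=-2+\de$, where the geometry degenerates --- to be the technical heart of the argument, while the passage to the limit on the good compact piece is routine stability of thermodynamic formalism under uniform Hölder perturbations.
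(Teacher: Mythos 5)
Your approach differs from the paper's and has a real gap. The paper does \emph{not} try to control convergence of the Perron--Frobenius eigenfunctions $\tilde h_{\phi_\de}$ directly; instead it extracts a weak$^*$ convergent subsequence $\mu_{\de_k}\to\hat\mu_0$ (compactness), uses Proposition \ref{prop:P} plus bounded distortion to show $h_{\de_k}=d\mu_{\de_k}/d\omega_{\de_k}$ is \emph{uniformly bounded on any ball $B(z_0,r)$ with $z_0\in(-2,2)$ away from $\pm2$}, concludes that $\hat\mu_0|_{(-2,2)}$ is absolutely continuous (hence, by uniqueness of the absolutely continuous invariant measure, a multiple of $\mu_0$ there), and then devotes its entire Step~2 to showing that no atom can form at the fixed point $p_0=2$, by proving $\mu_\de\bigl(\bigcup_{n\ge N}\mc C_{\de,n}^{+2}\bigr)\to0$ uniformly in $\de$.

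You have identified the wrong location of the difficulty. You flag as the ``main obstacle'' a possible concentration of $\tilde h_{\phi_\de}$ near the preimages of $0$. But near $0$ the density is under control: Proposition \ref{prop:P} keeps the postcritical set a definite distance from $0$, so all inverse branches have uniformly bounded distortion there, giving $h_\de<K_1$ on a fixed ball $B(0,r)$; combined with the geometric decay $\omega_\de(\mc C^0_{\de,n})\asymp|\la_\de|^{-d(\de)n/2}$ from Lemma \ref{lem:mm} this already gives the uniform smallness of $\mu_\de(\mc M^0_{\de,N})$ that you want. The genuinely delicate region is near $\pm p_\de$: there the postcritical set accumulates, distortion is \emph{not} uniformly bounded, and the limit density $h_0(z)=\frac{2}{\pi}(1-(z/2)^2)^{-1/2}$ blows up. Without the paper's cylinder estimate near $p_\de$ (which chains the bounded density at $0$ through $\mc C^{0\pm}_{\de,n+k+2}\to\mc C^{-2}_{\de,n+k+1}\to\mc C^{+2}_{\de,n}$ via \eqref{eq:h1}--\eqref{eq:hf}), one cannot rule out the weak$^*$ limit acquiring an atom at $p_0=2$, and your proposal does not address this at all. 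Also, the claim that ``invariance bounds $\mu_\de(\mc C^0_{\de,n})$ by a geometric factor'' does not follow as stated from invariance alone; invariance only gives $\mu_\de(\mc C^{+2}_{\de,m})=\mu_\de(\mc C^{+2}_{\de,m+1})+\mu_\de(\mc C^{-2}_{\de,m+1})$, which is monotone but not geometric without further input.

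Your Step~2 is also problematic on its own terms. The Ruelle operator is a global object on $\mc J_{\de_0}$; one cannot apply spectral perturbation theory ``on the compact piece $\mc J_{\de_0}\sms M_N^0$'' while ignoring the rest of the space. And on all of $\mc J_{\de_0}$ the potentials do not converge in H\"older norm: the candidate limit $\phi_0=-\log|2\vp_0|$ has a logarithmic singularity at $\vp_0^{-1}(0)$, so it is not even continuous, and the candidate limit density $\tilde h_{\phi_0}$ (i.e.\ $h_0\circ\vp_0$) is unbounded at the preimages of $\pm2$, which lie \emph{inside} your ``good'' compact set $\mc J_{\de_0}\sms M_N^0$. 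So the uniform convergence $\tilde h_{\phi_\de}\to\tilde h_{\phi_0}$ on that set, which your test-function argument requires, cannot hold as stated. The paper's compactness-plus-uniqueness route is precisely what avoids having to make sense of the transfer-operator framework at the degenerate parameter $\de=0$.
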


\begin{proof}
Fix $\alpha\in(0,2\pi)$.

\emph{Step 1.}
We see from \cite[Theorems 4.1, 4.12]{Z}) that the density $h_\de:=d\mu_\de/d\omega_\de$ is equal to the limit $\mathcal{L}^n_{\phi_\de}(\textbf{1})$, where $\phi_\de=-d(\de)\log|f'_\de(\varphi_\de)|$, $n\rightarrow\infty$. So, if $z=\vp_\de(s)$, then (\ref{eq:L}) leads to
\begin{equation}\label{eq:Ln}
   h_\de(z)=\lim_{n\rightarrow\infty}\mathcal{L}^n_{\phi_\de}(\textbf{1})(s)=\lim_{n\rightarrow\infty}\sum_{\overline s\in{T^{-n}(s)}}|(f_\de^n)'(\varphi_\de(\overline s))|^{-d(\de)}.
\end{equation}

Let $\de_{k}\rightarrow0$, where $\arg\de_{k}=\alpha$, be a sequence such that $\mu_{\de_k}$ tends to a measure $\hat\mu_0$ in the weak* topology.

If $z_0\in(-2,2)$, then we can find $r$ such that for $k$ large enough, and $\de$ close to $0$, $B(z_0,r)$ is separated from the postcritical set (cf. Proposition \ref{prop:P}). Then, all inverse branches of $f_{\de_k}^n$, $n\greq1$ have uniformly bounded distortion on $B(z_0,r)$. So, we see from (\ref{eq:Ln}) that there exists a constant $K_1>0$ such that
\begin{equation}\label{eq:Kk}
   h_{\de_k}\big|_{B(z_0,r)\cap\mc J_{\de_k}}<K_1.
\end{equation}
Passing to the limit (and possibly changing $K_1$), we obtain
\begin{equation}\label{eq:K}
    K^{-1}_1<h_{0}\big|_{(z_0-r,z_0+r)}<K_1,
\end{equation}
or $h_{0}=0$ on $B(z_0-r,z_0+r)$. In the latter case we get $h_{0}=0$ on $(-2,2)$.

The limit measure $\hat\mu_0$ is $f_0$-invariant. So, we see that if $\hat\mu_0$ has an atom, the only possibility is the fixed point $p_0=2$. Therefore, it is enough to show that there are no atom at $p_0$, because in that case $(\ref{eq:K})$ holds, and $\hat\mu_{0}$ is absolutely continuous with respect to $\omega_0$. Then, by uniqueness $\hat\mu_0=\mu_0$, so $\mu_0$ in the only possible limit, and the statement follows.

\emph{Step 2.}
So, now we prove that there are no atom at $p_0$.
If $\vp_\de(s)=z\in\mc C_{\de,n}^{+2}$, then using (\ref{eq:Ln}) we obtain
\begin{equation}\label{eq:h1}
   h_\de(z)=\sum_{k=0}^{\infty}|(f_{\de,-}^{-1}\circ f_{\de,+}^{-k})'(z)|^{d(\de)}\cdot h_\de((f_{\de,-}^{-1}\circ f_{\de,+}^{-k})(z)),
\end{equation}
where $(f_{\de,-}^{-1}\circ f_{\de,+}^{-k})(z)\in\mc C_{\de,n+k+1}^{-2}$. So, we must estimate values of $h_{\de}$ on the cylinders $\mc C_{\de,n+k+1}^{-2}$. We have
\begin{equation}\label{eq:hf}
   h_\de(z)=|(f_{\de,-}^{-1})'(z)|^{d(\de)}\cdot h_{\de}(f_{\de,-}^{-1}(z))+|(f_{\de,+}^{-1})'(z)|^{d(\de)} \cdot h_{\de}(f_{\de,+}^{-1}(z)).
\end{equation}
If $z\in \mc C_{\de,n+k+1}^{-2}$ then $f_{\de,\pm}^{-1}(z)\in \mc C_{\de,n+k+2}^{0\pm}$.

Let $z_0=0$ and let $r$ be such that (\ref{eq:Kk}) holds. Then, Lemma \ref{lem:mm} combined with Proposition \ref{prop:Hmetric}, implies that there exists $n_0\in\N$ such that $\mc\mc C_{\de,n+k+2}^{0\pm}\subset B(0,r)$ where $n\greq n_0$, $k\in\N$.

Again using Lemma \ref{lem:mm} (\ref{lit:mm1}), we get
$$|(f_{\de,\pm}^{-1})'(z)|^{d(\de)}=|f'_\de(f_{\de,\pm}^{-1}(z))|^{-d(\de)}<K_2|\la_\de|^{d(\de)(n+k+2)/2},$$
thus (\ref{eq:Kk}) and (\ref{eq:hf}) leads to
\begin{equation*}
    h_\de(z)<K_3|\la_\de|^{d(\de)(n+k+2)/2},
\end{equation*}
where $z\in \mc C_{\de,n+k+1}^{-2}$.

If $z\in \mc C_{\de,n}^{+2}$, then formula (\ref{eq:h1}) combined with the above estimate, and the fact that $|(f_{\de,-}^{-1}\circ f_{\de,+}^{-k})(z)|^{d(\de)}\asymp |\la_\de|^{-d(\de)(k+1)}$, gives us
\begin{equation*}
    h_\de(z)<  K_4\sum_{k=0}^{\infty}|\la_\de|^{d(\de)(n-k)/2}<K_5|\la_\de|^{d(\de)n/2},
\end{equation*}
where $n\greq n_0$.

We have $\omega_\de(\mc C_{\de,n}^{+2})\asymp(\diam \mc C_{\de,n}^{+2})^{d(\de)}\asymp |\la_\de|^{-d(\de)n}$. So, if $N\greq n_0$ then
\begin{multline*}
    \mu_\de\Big(\bigcup_{n=N}^{\infty} \mc C_{\de,n}^{+2}\Big)=\int_{\bigcup_{n=N}^{\infty} \mc C_{\de,n}^{+2}}h_\de \,d\omega_\de<K_6\sum_{n=N}^{\infty}|\la_\de|^{d(\de)n/2}|\la_\de|^{-d(\de)n}\\
    <K_6\sum_{n=N}^{\infty}4^{-n/3}<3K_6\Big(\frac14\Big)^{N/3}\xrightarrow[N\rightarrow\infty]{}0.
\end{multline*}
Because the above estimate does not depend on $\de$, the statement follows.
\end{proof}

\begin{lem}\label{lem:mu/omega}
For every $\alpha\in(0,2\pi)$ and $\ve>0$ there exist neighborhood $U\ni0$ and $\eta>0$ such that
$$\big(1-\ve\big)\frac2\pi<\frac{d\mu_\delta}{d\omega_\delta}\Big|_{U\cap\mc J_\de}<\big(1+\ve\big)\frac2\pi,$$
where $0<|\delta|<\eta$ and $\alpha=\arg\delta$ or $\de=0$.
\end{lem}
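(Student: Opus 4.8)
The plan is to combine the two ingredients already in hand: on the one hand the explicit formula \eqref{eq:m0o} giving $d\mu_0/d\omega_0(z)=\tfrac2\pi(1-(z/2)^2)^{-1/2}$, which at $z=0$ equals exactly $2/\pi$ and is continuous near $0$; on the other hand the weak$^*$ convergence $\omega_\de\to\omega_0$ and $\mu_\de\to\mu_0$ (Propositions \ref{prop:o} and \ref{prop:mu}). The strategy is to show that the Radon--Nikodym density $h_\de=d\mu_\de/d\omega_\de$ is, uniformly in $\de$ small, close to $2/\pi$ on a fixed neighborhood of $0$. First I would fix $\alpha\in(0,2\pi)$ and $\ve>0$, and pick a small $r>0$ so that on $(-r,r)$ the continuous function $\tfrac2\pi(1-(z/2)^2)^{-1/2}$ lies strictly between $(1-\ve/2)\tfrac2\pi$ and $(1+\ve/2)\tfrac2\pi$; this handles the case $\de=0$ directly.

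For $\de\ne0$ the key point is that, by Proposition \ref{prop:P}, a fixed ball $B(0,\rho)$ is separated from the postcritical set $P(f_\de)$ once $|\de|$ is small, so every inverse branch $f_{\de,\nu}^{-n}$, $n\geq1$, has distortion bounded by a constant independent of $n$ and of $\de$ on $B(0,\rho)$ (Koebe). Feeding this into the series representation \eqref{eq:Ln}, $h_\de(z)=\sum_{\bar s\in T^{-n}(s)}|(f_\de^n)'(\vp_\de(\bar s))|^{-d(\de)}$, the bounded distortion shows that $h_\de$ is (multiplicatively) almost constant on $B(0,\rho)\cap\mc J_\de$: for any two points $z,z'$ in that ball, $h_\de(z)/h_\de(z')$ is within a factor $1+o(1)$ of $1$ as $\rho\to0$, uniformly in $\de$. (Here one uses that the partition pieces $\mc C_{0,\nu}$ shrink, Lemma \ref{lem:Cnu}, so the "large $n$" tail is controlled and the finitely many initial branches are handled by Hölder continuity of $\log|f'_\de(\vp_\de)|$ and Proposition \ref{prop:uniform}.) Thus it suffices to pin down the value of $h_\de$ at a \emph{single} point.

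To pin down the value, I would integrate: $\int_{B(0,\rho)\cap\mc J_\de}h_\de\,d\omega_\de=\mu_\de(B(0,\rho)\cap\mc J_\de)$. By the weak$^*$ convergence of $\omega_\de$ and $\mu_\de$, and the fact that $\omega_0(\{|\re z|=\rho\})=0$ and $\mu_0(\{|\re z|=\rho\})=0$ (the boundary of the ball is $\omega_0$- and $\mu_0$-null), we get $\mu_\de(B(0,\rho)\cap\mc J_\de)\to\mu_0((-\rho,\rho))=\int_{-\rho}^{\rho}\tfrac2\pi(1-(z/2)^2)^{-1/2}\,d\la(z)$ and $\omega_\de(B(0,\rho)\cap\mc J_\de)\to\omega_0((-\rho,\rho))=2\rho$. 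Hence the ratio $\mu_\de(B(0,\rho)\cap\mc J_\de)/\omega_\de(B(0,\rho)\cap\mc J_\de)\to \tfrac{1}{2\rho}\int_{-\rho}^{\rho}\tfrac2\pi(1-(z/2)^2)^{-1/2}\,d\la(z)$, which lies within $(1\pm\ve/2)\tfrac2\pi$ for $\rho$ small. On the other hand, by the almost-constancy of $h_\de$, this average ratio is within a factor $1+o(1)$ of $h_\de$ at every point of $B(0,\rho)\cap\mc J_\de$. Combining, for $\rho$ small and $|\de|<\eta(\alpha,\rho,\ve)$ we get $(1-\ve)\tfrac2\pi<h_\de|_{B(0,\rho)\cap\mc J_\de}<(1+\ve)\tfrac2\pi$, and we take $U=B(0,\rho)$.

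The main obstacle is the uniform (in $\de$) bounded-distortion estimate for inverse branches on a fixed neighborhood of $0$ — one has to be careful that the relevant inverse branches of $f_\de^n$ really stay away from $P(f_\de)$ and from the critical value for \emph{all} small $\de$ with $\arg\de=\alpha$, not just $\de=0$; this is exactly what Proposition \ref{prop:P} (together with Lemma \ref{lem:|de|} to keep $0$ itself off the Julia set's "critical" preimage structure) is designed to give, but stitching the finitely-many-branches part (controlled by Proposition \ref{prop:uniform} and Hölder continuity of the potential) to the geometric-tail part (controlled by Lemma \ref{lem:Cnu}) uniformly in $\de$ requires some care. Everything else — the weak$^*$ limits and the null-boundary property of $\partial B(0,\rho)$ — is either already established or immediate.
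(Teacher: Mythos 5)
Your proposal is essentially the same argument as the paper's, and it is correct. Both proofs run on the same three pillars: (i) the explicit formula \eqref{eq:m0o} to handle $\de=0$ and to identify the target value $2/\pi$, (ii) uniform bounded distortion of all inverse branches on a fixed small ball around $0$ (via the control on the postcritical set) fed into the series \eqref{eq:Ln} to show $h_\de$ is nearly constant on $B(0,r)\cap\mc J_\de$, and (iii) the weak$^*$ convergences $\omega_\de\to\omega_0$, $\mu_\de\to\mu_0$ to pin the nearly constant value down to $2/\pi$. The only stylistic difference is that you spell out step (iii) as an explicit ratio of measures $\mu_\de(B)/\omega_\de(B)\to\mu_0/\omega_0$ average, whereas the paper compresses it into the terse statement that there exist comparison points $z_3\in(-r,r)$, $z_4\in B(0,r)\cap\mc J_\de$ with $|h_0(z_3)-h_\de(z_4)|$ small; your version is simply the unpacked form of that sentence. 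One small wrinkle: in converting the multiplicative bounded-distortion estimate $h_\de(z)/h_\de(z')\approx 1$ into the additive bound the lemma demands, the paper quietly uses the a priori upper bound on $h_\de$ from its proof of Proposition~\ref{prop:mu} (inequality \eqref{eq:Kk}); your integral-average route sidesteps the need for that separate upper bound, which is a marginal tidiness gain but not a substantively different argument.
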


\begin{proof}
Fix $\alpha\in(0,2\pi)$ and $\ve>0$. As before, we write $h_\de=d\mu_\de/d\omega_\de$.

We see from (\ref{eq:m0o}) that there exists an interval $(-\kappa,\kappa)$ such that
\begin{equation}\label{eq:2pi1}
   \frac2\pi\leeq h_0(z)\leeq \Big(1+\frac\ve3\Big)\frac2\pi,
\end{equation}
where $z\in(-\kappa,\kappa)$. So the statement holds for $\de=0$.

%Now we prove that the densities $h_\de$ converges uniformly to $h_0$ on $[-\kappa,\kappa]$.

We can find (small) $0<r<\kappa$ such that distortion of all inverse branches of $f_\de^{n}$, $n\greq1$ on $B(0,r)$ is as close to $1$ as we need. So, taking into account formula (\ref{eq:Ln}), we can assume for every $z_1, z_2\in B(0,r)\cap\mc J_\de$, we have
\begin{equation}\label{eq:2pi2}
   \big|h_\de(z_1)-h_\de(z_2)\big|<\frac\ve3\frac2\pi.
\end{equation}

Since $h_0$ and $h_\de$ are close to constant functions on $(-r,r)$ and $B(0,r)\cap \mc J_\de$ respectively, we conclude from Propositions \ref{prop:o}, \ref{prop:mu} that there exist $z_3\in(-r,r)$ and $z_4\in B(0,r)\cap \mc J_\de$ such that
$|h_0(z_3)-h_\de(z_4)|<\frac\ve3\frac2\pi$. So the statement follows from (\ref{eq:2pi1}) and (\ref{eq:2pi2}).
\end{proof}

Let us denote by $l_{\alpha,R}$ the arc length measure supported on $H^\divideontimes_{\alpha,R}$. Now we prove the main result of this Section:

\begin{prop}\label{prop:omega}
For every $\alpha\in(0,2\pi)$ and $R\greq1$, we have
$$\omega^\divideontimes_{\delta}\Big|_{\mc N^\dt_{\de,R}}\rightarrow l_{\alpha,R}, $$
in the weak$^*$ topology, where $\delta\rightarrow0$ and $\alpha=\arg\delta$.% and $l_{\alpha,R}$ is the arc length measure on $H^\divideontimes_{\alpha,R}$.
\end{prop}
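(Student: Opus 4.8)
The plan is to deduce the weak* convergence $\omega^\dt_\de|_{\mc N^\dt_{\de,R}}\to l_{\alpha,R}$ from the already-established geometric convergence $\mc N^\dt_{\de,R}\to H^\dt_{\alpha,R}$ (Proposition \ref{prop:Hmetric}) together with the conformality relation (\ref{eq:SP}) and the fact (Proposition \ref{prop:o}) that $\omega_\de\to\omega_0=\la$ weak*. The key point is that the restriction of $\omega_\de$ to the tiny neighborhood $\mc N_{\de,R}$ is not seen by the weak* limit of $\omega_\de$ (its mass shrinks like $\sqrt{|\de|}^{\,d(\de)}$), so we must instead push everything forward by $f_\de$. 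First I would observe that $f_\de$ maps $\mc N_{\de,R}$ (with bounded distortion, since $f_\de'(z)\asymp\sqrt{|\de|}$ there by Lemma \ref{lem:mm}(\ref{lit:mm1})) two-to-one onto a set $f_\de(\mc N_{\de,R})\subset\mc J_\de$ of diameter $\asymp R^2|\de|$, located near $-p_\de$; then I apply $f_\de^{m}$ for a fixed $m=m(R)$ to expand this to a set of definite size contained in a cylinder $\mc C^{+2}_{\de,k}$ near $p_\de=2$, again with bounded distortion. The composition $g_\de:=f_\de^{m+1}$ restricted to $\mc N_{\de,R}$ is a branched cover (branched only over $-p_\de$, i.e. the critical value corresponds to the critical point $0$) onto a fixed-scale set $W_\de\subset\mc J_\de$ converging (by Corollary \ref{cor:dH} and the uniform convergence of inverse branches) to a fixed arc $W_0\subset(-2,2)$ on which $\omega_0=\la$ is simply Lebesgue measure.

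Next I would set up the change of variables precisely. By (\ref{eq:SP}), for a Borel set $A\subset\mc N^\dt_{\de,R}$ on which a chosen branch of $g_\de$ is injective,
$$\omega^\dt_\de(A)=\frac{1}{\sqrt{|\de|}^{\,d(\de)}}\,\omega_\de(\sqrt{|\de|}A)=\frac{1}{\sqrt{|\de|}^{\,d(\de)}}\int_{g_\de(\sqrt{|\de|}A)}\big|(g_\de^{-1})'\big|^{d(\de)}\,d\omega_\de.$$
Since $g_\de'$ on $\sqrt{|\de|}\,\mc N^\dt_{\de,R}$ factors as a bounded-distortion piece times the singular derivative of $f_\de$ near the critical point, one computes $|(g_\de)'(z)|\asymp C(R)\,|z|/\sqrt{|\de|}\cdot(1+o(1))$ uniformly, where the $|z|$ comes from $f_\de'(z)=2z$ and the rescaling; equivalently, after rescaling, $|(g_\de\circ(\sqrt{|\de|}\,\cdot))'(w)|=c_\de\,|w|\,(1+o(1))$ on $\mc N^\dt_{\de,R}$ for a constant $c_\de\to c_\alpha>0$. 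Feeding this in, and using $d(\de)\to1$, $\omega_\de\to\la$, and $W_\de\to W_0$, the integral against $d\omega_\de$ converges to an integral against Lebesgue measure on $W_0$, and the Jacobian factor $|(g_\de^{-1})'|^{d(\de)}$ converges to $1/|g_0'|$, which is exactly the factor turning Lebesgue measure on $W_0$ back into the arc-length element on $H^\dt_{\alpha,R}$ (the curve $xy=-\tfrac13\sin\alpha$ is the rescaled preimage of a horizontal segment under $w\mapsto w^2$, and arc length is the natural push-forward of Lebesgue measure on that segment under the bounded-distortion-composed-with-$w\mapsto w^2$ map). Summing over the finitely many branches — two branches of $f_\de^{-1}$ near $0$ giving $H^{\dt,\pm}_{\alpha,R}$, times the finitely many branches of $f_\de^{-m}$ — yields $\omega^\dt_\de|_{\mc N^\dt_{\de,R}}\to l_{\alpha,R}$ weak*.

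To make the weak* statement rigorous I would test against an arbitrary $u\in C^0(\C)$: write $\int u\,d\omega^\dt_\de|_{\mc N^\dt_{\de,R}}$ via the change of variables above as a sum over branches of $\int_{W_\de} (u\circ g_\de^{-1})\,|(g_\de^{-1})'|^{d(\de)}\,d\omega_\de$; the integrand converges uniformly on the fixed compact $W_0$ (using Proposition \ref{prop:uniform}-type uniform convergence of the relevant inverse branches, continuity of $u$, $d(\de)\to1$, and the explicit asymptotics of $g_\de'$), and $\omega_\de\to\la$ weak* on a neighborhood of $W_0$ where the densities are bounded by (\ref{eq:Kk}); so each term converges to $\int_{W_0}(u\circ g_0^{-1})\,|g_0'|^{-1}\,d\la = \int_{H^\dt_{\alpha,R}} u\,dl_{\alpha,R}$ for the corresponding arc. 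The main obstacle I anticipate is the uniform control of the distortion and of the derivative asymptotics of $g_\de$ near the critical point after rescaling — one must check that the $o(1)$ errors in $\hat\Phi_\de$, in $p_\de=2-\tfrac13\de+\cdots$, and in the location of $\mc N_{\de,R}$ (Lemma \ref{lem:|de|}) are all uniform in the relevant range and genuinely negligible at scale $\sqrt{|\de|}$ — together with verifying at the endpoints $b^\dt_\alpha$ and $z^\dt_{\alpha,R}$ that no mass escapes, which is where Proposition \ref{prop:Hmetric} (both inclusions) and the endpoint computation (\ref{eq:endp}) are essential.
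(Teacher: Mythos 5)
Your overall plan — change variables via the conformality relation (\ref{eq:SP}) after iterating forward, invoke Proposition \ref{prop:Hmetric} for the geometry and Proposition \ref{prop:o} for $\omega_\de\to\omega_0$, then test against continuous functions — matches the paper's strategy in outline, and your intuition about the form of the rescaled Jacobian is right. But there is a genuine gap in the central step. You claim that applying $f_\de^m$ for a \emph{fixed} $m=m(R)$ expands $f_\de(\mc N_{\de,R})$ (diameter $\asymp R^2|\de|$, near $-p_\de$) to a set of definite size near $p_\de$. This cannot be: the multiplier at the repelling fixed point $p_\de$ is $\la_\de\to4$, so $f_\de^m$ expands by roughly $4^m$, and to go from diameter $\asymp|\de|$ to definite diameter one needs $4^m\asymp1/|\de|$, i.e.\ $m\asymp\log_4(1/|\de|)\to\infty$. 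Your own asymptotic $|(g_\de)'(z)|\asymp C(R)|z|/\sqrt{|\de|}$ is only consistent with $m\to\infty$ — for fixed $m$ the chain rule gives $|(g_\de)'(z)|\asymp|z|$, the wrong order. Letting $m\to\infty$ is precisely where the difficulty lies: one needs uniform control of $f_\de^m$ as $m$ grows with $\de$, and to obtain a genuine $(1+o(1))$ Jacobian (not merely $\asymp$) one must exploit the linearizing coordinate $\Phi_\de$ at $p_\de$ from Section \ref{sec:conj}, writing $f_\de^{n+1}=\Phi_\de^{-1}\circ(\la_\de^n\cdot)\circ\Phi_\de$, rather than appeal vaguely to ``bounded distortion.''

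The paper does exactly that, and the remaining delicate point — which your sketch also misses — is that, because $n_\de\to\infty$, the image $f_\de^{n_\de+1}(\mc N_{\de,R})$ can vary in position and scale near $2$ as $\de\to0$, so there is no canonical ``fixed arc $W_0\subset(-2,2)$.'' The paper chooses $n_\de$ so that this image barely fits in a ball $\overline{B(p_\de,r_\de)}$, passes to a subsequence on which $r_{\de_k}\to q$, shows the crucial normalized factor $|\de_k|\la_{\de_k}^{n_k}$ converges to some $\kappa$, and verifies that the resulting limit is $l_{\alpha,R}$ independently of $q$. Your final paragraph correctly senses that the uniformity of the $o(1)$ errors in $\hat\Phi_\de$, $p_\de$, and Lemma \ref{lem:|de|} is where the work is, but without letting the iterate count diverge and organizing the argument around the linearization together with the subsequence/compactness step, the proposal does not close.
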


%\emph{Remark.
%In Section \ref{sec:int3}, at the end of the proof of Theorem \ref{thm:minus2}, we will see that $|\de|^{1-d(\de)}\rightarrow1$.}

\begin{proof}%[Proof of Proposition \ref{prop:omega}]
Fix $\alpha\in(0,2\pi)$, $R\greq1$, and let $\alpha=\arg\de$. It is enough to consider the set $\mc N_{\de,R}^{\dt,+}:=\mc N_{\de,R}^{\dt}\cap\{z\in\C:\re z>0\}$. Write $\mc N_{\de,R}^{+}=\sqrt{|\de|}\mc N_{\de,R}^{\dt,+}$.
%and $0<|\de|<r_\vartriangle$.

\emph{Step 1.}
Let $U'$ be a small neighborhood of $H^{\divideontimes,+}_{\alpha,R}$ separated from $0$. Let $U=U'\cap\{z\in\C:0<\re z<R\}$.
For any Borel set $A\subset U$, such that $f_\de^{n+1}$ is injective on $\sqrt{|\de|}A$, we have
\begin{multline}\label{eq:omegaint}
{\omega}^\divideontimes_{\delta}(A)=\frac{1}{{\sqrt{|\de|}}^{d(\de)}}\,{\omega}_{\delta}(\sqrt{|\de|}A) = \frac{1}{{\sqrt{|\de|}}^{d(\de)}}\int_{f_\de^{n+1}(\sqrt{|\de|}A)}\big|(f^{-n-1}_{\de,\nu})'\big|^{d(\de)}d\omega_\de\\
=\int_{f_\de^{n+1}(\sqrt{|\de|}A)} \bigg|\frac{(f^{-n-1}_{\de,\nu})'}{\sqrt{|\de|}}\bigg|^{d(\de)}d\omega_\de,
\end{multline}
where $f^{-n-1}_{\de,\nu}$ denotes the inverse branch that maps $f_\de^{n+1}(\sqrt{|\de|}U)$ onto $\sqrt{|\de|}U$.

Our aim is to show that if $\de\rightarrow0$, then we can find $n(\de)$ such that for every Borel set $A\subset U$, satisfying $l_{\alpha,R}(\partial A)=0$, the above integral tends to $l_{\alpha,R}(A)$.

\emph{Step 2.}
Let $r_\de>0$ be the smallest number such that there exists $n\in \N$ for which
\begin{equation}\label{eq:r}
f_{\de}^{n+1}(\mc N_{\de,R}^+)\subset \overline{B(p_{\de},r_\de)},\:\:\textrm{ and } \:\:
f_{\de}^{n+2}(\mc N_{\de,R}^+)\:\;/\:\!\!\!\!\!\!\!\subset\overline{B(p_{\de},r_{\textrm{z}}/2)}.
\end{equation}
In particular we see that $r_\de\leeq r_{\textrm{z}}/2$, and possible values of $r_\de$ are separated from $0$. It follows from Proposition \ref{prop:Hmetric} that $\mc N_{\de,R}^+\subset U$ for $\de$ small enough.

Let $(\de_k)$ be a sequence of parameters tending to $0$ for which $r_{\de_k}\rightarrow q$ when $k\rightarrow\infty$. Let $n_k$ be such that (\ref{eq:r}) holds for $\de_k$.

We have $f_\de^{n_k+1}(\sqrt{|\de_k|}U)\subset B(p_\de,r_{\textrm{z}})$. If $z\in f_\de(\sqrt{|\de_k|}U)$, then $-z\in B(p_\de,r_{\textrm{z}})$, so using (\ref{eq:pfp}) we obtain
$$f^{n_k}_{\de_k}(z)=f^{n_k}_{\de_k}(-z)=\Phi_{\de_k}^{-1}\big(\lambda_{\de_k}^{n_k}\Phi_{\de_k}(-z)\big).$$
Therefore, if $z\in U$, then we have
\begin{equation}\label{eq:F}
F_{\alpha,q,k}(z):=f^{n_k+1}_{\de_k}\big(\sqrt{|\de_k|}z\big)= \Phi_{\de_k}^{-1}\big(\lambda_{\de_k}^{n_k}\Phi_{\de_k}(-|\de_k|z^2+2-\de_{k})\big).
\end{equation}

\emph{Step 3.}
We will prove that the sequence $F_{\alpha,q,k}$ converges uniformly on $U$.
Write
$$g_\de(z):=\frac{1}{|\de|}\Phi_{\de}\big(-|\de|z^2+2-\de\big)=\frac{1}{|\de|}\Phi_{\de}\big(p_{\de}-|\de|z^2+2-\de-p_{\de}\big),$$
so we have
\begin{equation}\label{eq:FFFF}
F_{\alpha,q,k}(z)=\Phi^{-1}_{\de_k}\big(|\de_k|\lambda_{\de_k}^{n_k}g_{\de_k}(z)\big).
\end{equation}
Note that
$$2-{\de_k}-p_{\de_k}=-\frac23\de_k+O(|\de_k|^2).$$
Therefore
$$g_{\de_k}(z)= \frac{\Phi_{\de_k}\big(p_{\de_k}+|\de_k|(-z^2-\frac23v+O(|\de_k|))\big)}{|\de_k|},$$
where $v=e^{i\alpha}$.
Since $\Phi_{\de_k}'(p_{\de_k})=1$, $\de_k\rightarrow0$ whereas $U$ is bounded, we conclude that
\begin{equation}\label{eq:FFF}
g_{\de_k}(z)\rightrightarrows -z^2-\frac23v.
\end{equation}

Proposition \ref{prop:Hmetric} and the fact that $\Phi_\de\rightrightarrows\Phi_0$, lead to
$$\diam(g_{\de_k}(\mc N^{\divideontimes,+}_{\de_k,R}))\rightarrow K_1,$$
for some constant $K_1$ (which does not depend on $q$ and is close to $R^2$).
On the other hand
$$\diam(F_{\alpha,q,k}(\mc N^{\divideontimes,+}_{\de_k,R}))\rightarrow q.$$
So, (\ref{eq:FFFF}) and the fact $\arg(\la_{n_k})=O(\de_k)$, lead to
$$|\de_k|\lambda_{\de_k}^{n_k}\rightarrow \kappa\in\R.$$
Thus, (\ref{eq:FFFF}) combined with (\ref{eq:FFF}) and the above, gives us
$$F_{\alpha,q,k}(z)\rightrightarrows \Phi^{-1}_{0}\Big(\kappa\Big(-z^2-\frac23v\Big)\Big):=F_{\alpha,q}(z),$$
where $k\rightarrow\infty$.

\emph{Step 4.} Since $(b^\dt_\alpha)^2=-\frac23v$, we have $F_{\alpha,q}(b^\dt_\alpha)=p_0=2$. Thus, we conclude from Proposition \ref{prop:Hmetric}, and the choice of $(\de_k)$, that
$$F_{\alpha,q}\big|_{H^+_{\alpha,R}}:H^+_{\alpha,R}\rightarrow[2-q,2]\subset \mc J_0.$$
Moreover $F_{\alpha,q}\big|_{H^+_{\alpha,R}}$ is a bijection, and then
\begin{equation}\label{eq:FF}
\int_{F_{\alpha,q}(A)} \big|(F^{-1}_{\alpha,q})'\big|d\omega_{0}=l_{\alpha,R}(A).
\end{equation}
Hence, the assumption $l_{\alpha,R}(\partial A)=0$ leads to $\omega_0(F_{\alpha,q}(\partial A))=0$.

We have (cf. (\ref{eq:F}))
$$F_{\alpha,q,k}^{-1}(z)=\frac{f_{\de_k}^{-n_k-1}(z)}{\sqrt{|\de_k|}}\rightarrow F_{\alpha,q}^{-1}(z).$$
Therefore (\ref{eq:omegaint}) gives us
\begin{equation*}
{\omega}^\divideontimes_{\delta_k}(A)=\int_{F_{\alpha,q,k}(A)} \big|(F^{-1}_{\alpha,q,k})'\big|^{d(\de_k)}d\omega_{\de_k}.
\end{equation*}
Since $\omega_0(F_{\alpha,q}(\partial A))=0$
and $F_{\alpha,q,k}$ converges uniformly to $F_{\alpha,q}$, measure of symmetric difference $\omega_{\de_k}(F_{\alpha,q,k}(A) \Delta F_{\alpha,q}(A))$ tends to $0$. So uniform convergence of $|(F^{-1}_{\alpha,q,k})'|^{d(\de_k)}$, Proposition \ref{prop:o} and (\ref{eq:FF}) lead to
$$\int_{F_{\alpha,q,k}(A)} \big|(F^{-1}_{\alpha,q,k})'\big|^{d(\de_k)}d\omega_{\de_k}\\
\rightarrow\int_{F_{\alpha,q}(A)} \big|(F^{-1}_{\alpha,q})'\big|d\omega_{0}=l_{\alpha,R}(A).$$
Thus, the statement holds if $r(\de_k)\rightarrow q$. But, $l_{\alpha,R}$ is the only possible limit, because for every $\de_k$ we can always find a subsequence $\de_{j_k}$ such that $r(\de_{j_k})\rightarrow q$.
\end{proof}

\section{Key integral}\label{sec:int1}

\subsection{}
The main problem in the proof of the Theorem \ref{thm:minus2}, is to estimate integral from the numerator of the formula (\ref{eq:d}), namely:
\begin{equation}\label{eq:integral}
\int_{\mc J_{\de_0}}\frac{\partial}{\partial t}\log|f_{tv}'(\vp_{tv})|d\tilde\mu_{tv}.
\end{equation}

Put $\dot\vp_\delta:=\frac{\partial}{\partial \delta}\vp_\delta$. We have
\begin{equation*}
\frac{\partial}{\partial t}(f'_{tv}(\vp_{tv}))=\frac{\partial}{\partial t}(2\vp_{tv})= 2v\dot\vp_\delta\big|_{\delta=tv}.
\end{equation*}
Therefore, the integrand can be rewritten as follows:
\begin{equation}\label{eq:f1}
\frac{\partial}{\partial t}\log|f'_{tv}(\vp_{tv})|=\re\Big(\frac{\frac{\partial}{\partial t}(f'_{tv}(\vp_{tv}))}{f'_{tv}(\vp_{tv})}\Big)=\re\Big(v\frac{\dot\vp_\de\big|_{\delta=tv}}{\vp_{tv}}\Big).
\end{equation}

\subsection{}
Now we derive the formula for $\dot\vp_\de$.
The function $\vp_\de$ conjugates $f_{\de_0}=T$ to $f_\de$, so $\vp_\de(T(s))=\vp_\de^2(s)-2+\de$.
Differentiating both sides with respect to $\de$, we get
$$\dot{\vp}_\de(T(s))=2\vp_\de(s)\dot{\vp}_\de(s)+1,$$
hence
\begin{equation}\label{eq:s0}
    \dot\vp_\de(s)=-\frac{1}{2\vp_\de(s)}+\frac{\dot\vp_\de(T(s))}{2\vp_\de(s)}.
\end{equation}
Next, replacing $s$ by $T(s)$, $T^2(s)$,..., $T^{m-1}(s)$, we obtain
\begin{align*}\label{eq:sumass}
   \dot\vp_\de(s)=&-\sum_{k=0}^{m-1}
   \frac{1}{2\vp_\de(s)\cdot2\vp_\de(T(s))\cdot...\cdot2\vp_\de(T^{k}(s))}+\\
   &+\frac{\dot\vp_\de(T^m(s))}{2\vp_\de(s)\cdot2\vp_\de(T(s))\cdot...\cdot2\vp_\de(T^{m-1}(s))}.
\end{align*}
Since $2\vp_\de(s)=f'_\de(\vp_\de(s))$, the chain rule leads to
\begin{equation}\label{eq:sumas}
   \dot\vp_\delta(s)=-\sum_{k=1}^{m}\frac{1}{(f_\delta^k)'(\vp_\delta(s))}+\frac{\dot\vp_\delta(T^m(s))}{(f_\delta^m)'(\vp_\delta(s))}.
\end{equation}

\subsection{}
Let us assume that $z\in\mc J_\de$ is close to $0$, and $z=\vp_\de(s)$. Then the denominator of (\ref{eq:f1}) is small, moreover we see from (\ref{eq:s0}) that
   $$\dot\vp_\de(s)=-\frac{1}{2z}(1-\dot\vp_\de(T(s))).$$
Since $\vp_\de(T(s))$ is close to $-2$, it follows from the formula (\ref{eq:sumas}) that $\dot\vp_\de(T(s))$ should usually be close to $1/3$. If this is so, we get (cf. (\ref{eq:f1}))
\begin{equation*}\label{eq:s43}
   \re\Big(v\frac{\dot\vp_\de}{\vp_{\de}}\Big)\approx\frac13\re\Big(-\frac{v}{z^2}\Big).
\end{equation*}
Thus, we can expect that the function $d'_v(tv)$, and the integral of $\re(-v/z^2)$
over a neighborhood of $0$, with respect to $\mu_\de$, have the same order.

Indeed, integral of $\re(-v/z^2)$ has decisive influence on $d'_v(tv)$, and we will compute it over the sets $\mc N_{\de,R}$ in Proposition \ref{prop:intJR}. But first, for $\alpha\in(0,\pi)$ and $R\greq1$, let us define
\begin{equation}\label{eq:I}
   I_{\alpha,R}:=\sqrt6\cos\alpha\Big(\sqrt{\frac{\sin(2\gamma_{\alpha,R})}{\sin\alpha}}-1\Big)+ \frac{\sqrt6}{2}\sqrt{\sin\alpha}\int_{\alpha}^{\pi-2\gamma_{\alpha,R}} \sqrt{\sin \beta}\,d\beta,
\end{equation}
where $\gamma_{\alpha,R}$ is given by the formula (\ref{eq:gamma}). For $\alpha=\pi$ we take
\begin{equation}\label{eq:II}
   I_{\pi,R}={\sqrt6}\Big(1-\frac{\sqrt6}{3}\frac1R\Big)={\sqrt6}-\frac2R.
\end{equation}
Note that
   $$\sqrt{\frac{\sin(2\gamma_{\alpha,R})}{\sin\alpha}}=\frac{\sqrt{6}}{3}\frac1R\frac{1}{\sqrt{1+(\frac{\sin\alpha}{3R^2})^2}},$$
therefore $I_{\alpha,R}\rightarrow I_{\pi,R}$, when $\alpha\rightarrow\pi^-$.

\begin{prop}\label{prop:intJR}
For every $\alpha\in(0,\pi]$ and $R\greq1$ we have
   $$\lim_{\de\rightarrow0}|\de|^{1-\frac12{d(\de)}}\int_{\mc N_{\de,R}}\re\Big(-\frac{v}{z^2}\Big)d\mu_{\de}(z)= \frac2\pi I_{\alpha,R},$$
where $\alpha=\arg\de$ and $v=e^{i\alpha}$.
\end{prop}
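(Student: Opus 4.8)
The plan is to reduce the integral over $\mc N_{\de,R}$ with respect to $\mu_\de$ to an integral over the limiting curve $H^\dt_{\alpha,R}$ with respect to arc length, and then to evaluate that curve integral explicitly. First I would rescale: using the substitution $z=\sqrt{|\de|}\,w$ one has
$$\int_{\mc N_{\de,R}}\re\Big(-\frac v{z^2}\Big)d\mu_\de(z)=\int_{\mc N^\dt_{\de,R}}\re\Big(-\frac v{w^2}\Big)\,\frac{1}{|\de|}\,d\mu_\de(\sqrt{|\de|}w)=|\de|^{\frac12 d(\de)-1}\int_{\mc N^\dt_{\de,R}}\re\Big(-\frac v{w^2}\Big)d\mu^\dt_\de(w),$$
which is exactly why the prefactor $|\de|^{1-\frac12 d(\de)}$ appears. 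So the claim is equivalent to
$$\int_{\mc N^\dt_{\de,R}}\re\Big(-\frac v{w^2}\Big)d\mu^\dt_\de(w)\longrightarrow \frac2\pi\int_{H^\dt_{\alpha,R}}\re\Big(-\frac v{w^2}\Big)dl_{\alpha,R}(w),$$
and then one must check the right-hand side equals $\frac2\pi I_{\alpha,R}$.

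The convergence step is where I would invoke the machinery already built. By Proposition \ref{prop:omega}, $\omega^\dt_\de\big|_{\mc N^\dt_{\de,R}}\to l_{\alpha,R}$ weak$^*$, and by Lemma \ref{lem:mu/omega} the density $h_\de=d\mu_\de/d\omega_\de$ converges uniformly on a neighborhood of $0$ to the constant $2/\pi$ (note $h_0(0)=2/\pi$ from \eqref{eq:m0o}). The rescaled densities satisfy $d\mu^\dt_\de/d\omega^\dt_\de(w)=h_\de(\sqrt{|\de|}w)$, which tends to $2/\pi$ uniformly on $\mc N^\dt_{\de,R}$ since $\sqrt{|\de|}w$ ranges over a shrinking neighborhood of $0$. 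Therefore $\mu^\dt_\de\big|_{\mc N^\dt_{\de,R}}\to \frac2\pi l_{\alpha,R}$ weak$^*$. The only subtlety is that $\re(-v/w^2)$ is unbounded near $w=0$, while the curve $H^\dt_{\alpha,R}$ stays away from $0$ (its inner endpoints are $\pm b^\dt_\alpha$ with $|b^\dt_\alpha|^2=2/3$); so one must verify that the mass of $\mu^\dt_\de$ on the part of $\mc N^\dt_{\de,R}$ close to $0$ contributes negligibly. This follows because, for $R'$ small, the portion $\mc N^\dt_{\de,R'}$ rescales back to $\mc N_{\de,R'\sqrt{|\de|}}$ which by Lemma \ref{lem:|de|} is pinched into an $O(|\de|^{1/16})$-neighborhood of the hyperbola $xy=-\frac13\sin\alpha$, forcing $|w|\gtrsim$ const on the support; more carefully, Lemma \ref{lem:mm} controls $\mu^\dt_\de$ on the cylinders $\mc C^0_{\de,n}$ and shows the measure near $0$ decays geometrically, so $\int_{\{|w|<R'\}\cap\mc N^\dt_{\de,R}}|w|^{-2}d\mu^\dt_\de$ can be made small uniformly in $\de$. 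This uniform-smallness-near-$0$ estimate, together with the weak$^*$ convergence on the complementary compact piece, gives the limit.

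For the explicit evaluation of $\int_{H^\dt_{\alpha,R}}\re(-v/w^2)\,dl_{\alpha,R}$ I would parametrize $H^{\dt,+}_{\alpha,R}$. Writing $w=x+iy$ with $xy=-\frac13\sin\alpha$ on the branch $x>0$, one can use the angular parametrization coming from the picture: points of the hyperbola $xy=c$ can be written $w=\sqrt{|c|}\,(e^{i\theta/2}$-type$)$, but it is cleaner to note that $\re(-v/w^2)=\re(-e^{i\alpha}\bar w^2/|w|^4)$ and that on $H^{\dt,+}_{\alpha,R}$ one has $\im(w^2)=2xy=-\frac23\sin\alpha$ constant, so $w^2$ traces a horizontal segment. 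Setting $w^2=u-\frac{2i}3\sin\alpha$, the arc-length element and the integrand both become explicit functions of $u$ (or of the argument $\beta=\arg w^2$ ranging from $\arg(b^\dt_\alpha)^2=-(\pi-\alpha)$... more precisely from $\alpha$ up to $\pi-2\gamma_{\alpha,R}$, matching the limits in \eqref{eq:I}). Carrying out the change of variables, the two terms in $I_{\alpha,R}$ — the boundary term $\sqrt6\cos\alpha(\sqrt{\sin(2\gamma_{\alpha,R})/\sin\alpha}-1)$ and the integral term $\frac{\sqrt6}2\sqrt{\sin\alpha}\int_\alpha^{\pi-2\gamma_{\alpha,R}}\sqrt{\sin\beta}\,d\beta$ — should emerge, the first from evaluating a total-derivative piece at the two endpoints $b^\dt_\alpha$ and $z^\dt_{\alpha,R}$, the second from the genuinely non-exact part; the $\alpha=\pi$ formula \eqref{eq:II} is then a direct check. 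The main obstacle is not conceptual but is the careful bookkeeping: handling the singularity of $\re(-v/w^2)$ at the origin uniformly in $\de$ (the first part of the second paragraph) is the one place where one genuinely needs the cylinder estimates of Lemma \ref{lem:mm} rather than soft weak$^*$ arguments, and getting the endpoint angles and orientation right so that the explicit integral reproduces \eqref{eq:I} exactly.
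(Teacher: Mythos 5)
Your proposal matches the paper's proof: the same rescaling reduces the problem to the curve integral over $H^\dt_{\alpha,R}$ via Proposition \ref{prop:omega} and Lemma \ref{lem:mu/omega}, and the explicit evaluation is exactly the $\arg(w^2)$-parametrization you suggest --- the paper uses polar coordinates $w=h_\alpha(t)e^{it}$ on $H^{\dt,+}_{\alpha,R}$ followed by the substitution $\beta=2t+\pi$, giving the same limits $\alpha$ to $\pi-2\gamma_{\alpha,R}$. The one place you overcomplicate is the singularity at $w=0$: Proposition \ref{prop:Hmetric} already gives Hausdorff convergence of $\mc N^\dt_{\de,R}$ to $H^\dt_{\alpha,R}$, which is bounded away from the origin (the inner endpoints satisfy $|b^\dt_\alpha|^2=2/3$), so for small $\de$ all the measures involved are supported in a fixed compact set avoiding $0$ and the integrand can simply be cut off to a bounded continuous function there; no separate cylinder estimate on the mass near $0$ via Lemma \ref{lem:mm} is needed, and indeed your appeal to Lemma \ref{lem:|de|} for this purpose is imprecise since that lemma constrains $\mc J_\de$ near $\pm p_\de$ rather than near $0$ directly.
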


\begin{proof}
Fix $\alpha\in(0,\pi)$ and $R\greq1$.
First, using Proposition \ref{prop:omega}, we obtain
\begin{multline*}
\frac{|\de|}{\sqrt{|\de|}^{d(\de)}}\int_{\mc N_{\de,R}}\re\Big(-\frac{v}{z^2}\Big)d\omega_{\de}(z)=\frac{1}{\sqrt{|\de|}^{d(\de)}}\int_{\mc N_{\de,R}}\re\Big(-\frac{v}{(z/\sqrt{|\de|})^2}\Big)d\omega_{\de}(z)
\\=\int_{\mc N^\dt_{\de,R}}\re\Big(-\frac{v}{z^2}\Big)d\omega^\dt_{\de}(z)\rightarrow\int_{ H^\dt_{\alpha,R}}\re\Big(-\frac{v}{z^2}\Big)dl_{\alpha,R}(z).
\end{multline*}
So, taking into account Lemma \ref{lem:mu/omega}, we have to prove that the integral over $H^\dt_{\alpha,R}$ is equal to $I_{\alpha,R}$. It is enough to consider the integral restricted to $H^{\dt,+}_{\alpha,R}$, because value of the integral over $H^{\dt,-}_{\alpha,R}$ is the same.

We have
\begin{equation}\label{eq:H+}
\int_{H^{\dt,+}_{\alpha,R}}\re\Big(-\frac{v}{z^2}\Big)dl_{\alpha,R}(z)= -\int_{H^{\dt,+}_{\alpha,R}}\frac{(x^2-y^2)\cos\alpha+2xy\sin\alpha}{(x^2+y^2)^2}\,dl_{\alpha,R}(z),
\end{equation}
where $x=\re(z)$ and $y=\im(z)$.

If $\alpha\in(0,\pi)$, then $H^{\dt,+}_{\alpha,R}$ is the arc of hyperbola $xy=-\frac13\sin\alpha$ contained in $IV$ quadrant, which joins the points $b^\dt_\alpha$ and $z^\dt_{\alpha,R}$ (see (\ref{eq:endp})).
So, in the polar coordinates $H^{\dt,+}_{\alpha,R}$ can be written as follows:
$$
H^{\dt,+}_{\alpha,R}:\left\{ \begin{array}{ll}
x(t)=h_\alpha(t)\cdot\cos t,\\
y(t)=h_\alpha(t)\cdot \sin{t}.
\end{array} \right.\, t\in\Big[\arctan\frac{\im b^\dt_\alpha}{\re b^\dt_\alpha},\arctan\frac{\im z^\dt_{\alpha,R}}{\re z^\dt_{\alpha,R}}\Big],
$$
where $h^2_\alpha(t)\cdot\frac12\sin(2t)=-\frac13\sin\alpha$, and
$$\frac{\im b^\dt_\alpha}{\re b^\dt_\alpha}=-\cot\frac\alpha2=\tan\Big(\frac\alpha2-\frac\pi2\Big),\quad\frac{\im z^\dt_{\alpha,R}}{\re z^\dt_{\alpha,R}}=-\frac{1}{3R^2}\sin\alpha.$$
Therefore, we obtain (cf. (\ref{eq:gamma}))
$$
H^{\dt,+}_{\alpha,R}:\left\{ \begin{array}{ll}
x(t)=\sqrt{-\frac23\frac{\sin\alpha}{\sin(2t)}}\cdot\cos t,\\
y(t)=\sqrt{-\frac23\frac{\sin\alpha}{\sin(2t)}}\cdot \sin{t},
\end{array} \right.\, t\in\Big[\frac12(\alpha-\pi),-\gamma_{\alpha,R}\Big].
$$

Next, we can get
$$dl_{\alpha,R}=\sqrt{h_\alpha^2(t)+(h_\alpha'(t))^2}\,dt=\sqrt{-\frac23\frac{\sin\alpha}{\sin^3(2t)}}\,dt.$$
Since $x(t)y(t)=-\frac13\sin\alpha$, and $x^2(t)+y^2(t)=h^2_\alpha(t)$, moreover
$$x^2(t)-y^2(t)=h_\alpha^2(t)\big(\cos^2t-\sin^2t\big)=-\frac23\sin\alpha\frac{\cos(2t)}{\sin(2t)},$$
the integrals from (\ref{eq:H+}) are equal to
$$-\int_{\frac12(\alpha-\pi)}^{-\gamma_{\alpha,R}}\frac{-\frac23\sin\alpha\cos\alpha\frac{\cos(2t)}{\sin(2t)} -\frac23\sin^2\alpha}{\frac49\sin^2\alpha\frac{1}{\sin^2(2t)}} \sqrt{-\frac23\frac{\sin\alpha}{\sin^3(2t)}}\,dt.$$
Simplifying we obtain
$$\frac{\sqrt{6}}{2}\int_{\frac12(\alpha-\pi)}^{-\gamma_{\alpha,R}}\Big(\frac{\cos\alpha}{\sin\alpha}\frac{\cos(2t)}{\sin(2t)}+1\Big) \sin^2(2t)\sqrt{-\frac{\sin\alpha}{\sin^3(2t)}}\,dt.$$

Substitution $\beta=2t+\pi$, and the above computations, lead to
$$\int_{H^{\dt,+}_{\alpha,R}}\re\Big(-\frac{v}{z^2}\Big)dl_{\alpha,R}(z)= \frac{\sqrt{6}}{4}\int_{\alpha}^{\pi-2\gamma_{\alpha,R}}\Big(\frac{\cos\alpha}{\sin\alpha}\frac{\cos \beta}{\sin \beta}+1\Big) \sin^2\beta\sqrt{\frac{\sin\alpha}{\sin^3\beta}}\,d\beta.$$
We divide the integrand into two parts. First we compute:
\begin{multline*}
\frac{\sqrt{6}}{4}\int_{\alpha}^{\pi-2\gamma_{\alpha,R}}\frac{\cos\alpha}{\sin\alpha}\frac{\cos \beta}{\sin \beta} \sin^2\beta\sqrt{\frac{\sin\alpha}{\sin^3\beta}}\,d\beta=
\frac{\sqrt{6}}{4}\frac{\cos\alpha}{\sqrt{\sin\alpha}}\int_{\alpha}^{\pi-2\gamma_{\alpha,R}}\frac{\cos \beta}{\sqrt{\sin \beta}}\,d\beta\\
=\frac{\sqrt{6}}{4}\frac{\cos\alpha}{\sqrt{\sin\alpha}}\big(2\sqrt{\sin\beta}\big)\Big|_\alpha^{\pi-2\gamma_{\alpha,R}} =\frac{\sqrt{6}}{2}\cos\alpha\sqrt{\frac{\sin2\gamma_{\alpha,R}}{\sin\alpha}}-\frac{\sqrt{6}}{2}\cos\alpha.
\end{multline*}
Next we have
$$\frac{\sqrt{6}}{4}\int_{\alpha}^{\pi-2\gamma_{\alpha,R}} \sin^2\beta\sqrt{\frac{\sin\alpha}{\sin^3\beta}}\,d\beta= \frac{\sqrt{6}}{4}\sqrt{\sin\alpha}\int_{\alpha}^{\pi-2\gamma_{\alpha,R}} \sqrt{\sin \beta}\,d\beta.$$
Thus, the integral over $H^{\dt,+}_{\alpha,R}$ is equal $\frac12I_{\alpha,R}$ (cf. (\ref{eq:I})), so the statement holds for $\alpha\in(0,\pi)$.

If $\alpha=\pi$, then $H^{\dt,+}_{\pi,R}:\frac{\sqrt6}{3}\leeq x\leeq R$, and

$$\int_{H^{\dt,+}_{\pi,R}}\re\Big(-\frac{-1}{z^2}\Big)dl_{\pi,R}(z)= \int^{R}_{\frac{\sqrt6}{3}}\frac{1}{x^2}\,dx=-\frac{1}{x}\Big|^{R}_{\frac{\sqrt6}{3}}=\frac{\sqrt{6}}{2}-\frac1R=\frac12I_{\pi,R},$$
(cf. (\ref{eq:II})) thus the proof is finished.
\end{proof}

Now we give two technical Corollaries.

\begin{cor}\label{cor:IntJR2}
For every $\alpha\in(0,\pi]$ and $R\greq1$ there exists $K_{\alpha,R}\in\R$ such that
   $$\lim_{\de\rightarrow0}|\de|^{1-\frac12d(\de)}\int_{\mc N_{\de,R}}\frac{1}{|z|^2}\,d\mu_{\de}(z)= K_{\alpha,R},$$
where $\alpha=\arg\de$.
\end{cor}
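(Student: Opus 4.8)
The plan is to mimic the proof of Proposition \ref{prop:intJR} almost verbatim, replacing the integrand $\re(-v/z^2)$ by $1/|z|^2$. First I would apply Proposition \ref{prop:omega} together with the rescaling identity for the measures: since $\mc N_{\de,R}=\sqrt{|\de|}\,\mc N_{\de,R}^\dt$, we have
$$
|\de|^{1-\frac12 d(\de)}\int_{\mc N_{\de,R}}\frac{1}{|z|^2}\,d\omega_\de(z)=\int_{\mc N^\dt_{\de,R}}\frac{1}{|z|^2}\,d\omega^\dt_\de(z)\longrightarrow\int_{H^\dt_{\alpha,R}}\frac{1}{|z|^2}\,dl_{\alpha,R}(z).
$$
The function $z\mapsto 1/|z|^2$ is continuous and bounded on a neighborhood of $H^\dt_{\alpha,R}$ (which is separated from $0$, since $|x|\greq\frac{\sqrt6}{3}\sin\frac\alpha2$ forces $|z|$ away from $0$), so the weak$^*$ convergence of $\omega^\dt_\de|_{\mc N^\dt_{\de,R}}$ to $l_{\alpha,R}$ gives the convergence of the integrals directly. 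Then Lemma \ref{lem:mu/omega} lets me pass from $\omega_\de$ to $\mu_\de$ at the cost of the factor $2/\pi$ (which is harmless here since we only claim existence of the limit, not its value), and I would simply define $K_{\alpha,R}:=\frac2\pi\int_{H^\dt_{\alpha,R}}|z|^{-2}\,dl_{\alpha,R}(z)$.

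The only remaining point is to check that this last integral is finite, i.e. that $K_{\alpha,R}\in\R$. For $\alpha\in(0,\pi)$ this follows from the explicit polar parametrization already set up in the proof of Proposition \ref{prop:intJR}: on $H^{\dt,+}_{\alpha,R}$ one has $x^2(t)+y^2(t)=h_\alpha^2(t)=-\frac23\sin\alpha/\sin(2t)$ and $dl_{\alpha,R}=\sqrt{-\frac23\sin\alpha/\sin^3(2t)}\,dt$, so
$$
\int_{H^{\dt,+}_{\alpha,R}}\frac{1}{|z|^2}\,dl_{\alpha,R}=\int_{\frac12(\alpha-\pi)}^{-\gamma_{\alpha,R}}\frac{-\sin(2t)}{\frac23\sin\alpha}\sqrt{\frac{-\frac23\sin\alpha}{\sin^3(2t)}}\,dt=\sqrt{\frac{3}{2\sin\alpha}}\int_{\frac12(\alpha-\pi)}^{-\gamma_{\alpha,R}}\frac{dt}{\sqrt{-\sin(2t)}},
$$
and after the substitution $\beta=2t+\pi$ this is $\frac12\sqrt{\tfrac{3}{2\sin\alpha}}\int_{\alpha}^{\pi-2\gamma_{\alpha,R}}(\sin\beta)^{-1/2}\,d\beta$, which converges since $\sin\beta$ is bounded below by a positive constant on the compact interval $[\alpha,\pi-2\gamma_{\alpha,R}]\subset(0,\pi)$. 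The integral over $H^{\dt,-}_{\alpha,R}$ equals that over $H^{\dt,+}_{\alpha,R}$ by symmetry, so $K_{\alpha,R}=\frac4\pi$ times this value. For $\alpha=\pi$ the set $H^{\dt,+}_{\pi,R}$ is just the segment $[\frac{\sqrt6}{3},R]$ on the real axis, and $\int_{\frac{\sqrt6}{3}}^{R}x^{-2}\,dx=\frac{\sqrt6}{2}-\frac1R$ is trivially finite.

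I do not expect any genuine obstacle here: the proposition is a routine variant of Proposition \ref{prop:intJR}, and the only thing that could go wrong — a singularity of $1/|z|^2$ at the origin — is excluded because $H^\dt_{\alpha,R}$ is bounded away from $0$ (uniformly in $\de$, since the lower bound $\frac{\sqrt6}{3}\sin\frac\alpha2$ on $|x|$ is independent of $\de$). The mildest subtlety worth a sentence in the writeup is justifying that weak$^*$ convergence on the rescaled neighborhood passes to the integral of the \emph{unbounded-on-all-of-$\C$} function $1/|z|^2$; this is fine because we integrate over a fixed compact set on which the function is continuous, and we may truncate the function outside a fixed neighborhood of $H^\dt_{\alpha,R}$ without affecting either side for $\de$ small.
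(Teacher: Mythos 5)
Your proposal is correct and takes essentially the same approach as the paper, which states the proof is carried out ``Analogously as in the proof of Proposition \ref{prop:intJR}'' and then records the same polar-coordinate computation for $\alpha\in(0,\pi)$ and the same elementary integral for $\alpha=\pi$. You are slightly more explicit than the paper about the truncation/boundedness justification for passing to the limit under weak$^*$ convergence and about the exact normalization of $K_{\alpha,R}$ (the factor $\tfrac2\pi$ from Lemma \ref{lem:mu/omega}), but these are presentational differences, not substantive ones.
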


\begin{proof}
Analogously as in the proof of Proposition \ref{prop:intJR}, it is enough to compute integral $\int_{ H^{\dt,+}_{\alpha,R}}|z|^{-2}dl_{\alpha,R}(z)$. For $\alpha\in(0,\pi)$, proceeding as before, we obtain
\begin{multline*}
\int_{ H^{\dt,+}_{\alpha,R}}\frac{1}{|z|^2}\,dl_{\alpha,R}(z) =\int_{\frac{1}{2}(\alpha-\pi)}^{-\gamma_{\alpha,R}} -\frac32\frac{\sin(2t)}{\sin\alpha}\sqrt{-\frac23\frac{\sin\alpha}{\sin^3(2t)}}\,dt\\= \sqrt{\frac32}\frac{1}{\sqrt{\sin\alpha}}\int_{\frac{1}{2}(\alpha-\pi)}^{-\gamma_{\alpha,R}}\frac{1}{\sqrt{-\sin(2t)}}\,dt:=\frac12K_{\alpha,R}.
\end{multline*}

If $\alpha=\pi$, then
$$\int^{R}_{\frac{\sqrt{6}}{3}}\frac{1}{x^2}\,dx=\frac{\sqrt{6}}{2}-\frac1R:=\frac12K_{\pi,R},$$
and the proof is finished.
\end{proof}

\begin{cor}\label{cor:IntJR}
For every $\alpha\in(0,\pi]$ and $R\greq1$ there exists $K_{\alpha,R}\in\R$ such that
   $$\lim_{\de\rightarrow0}|\de|^{\frac12(1-d(\de))}\int_{\mc N_{\de,R}}\frac{1}{|z|}\,d\mu_{\de}(z)= K_{\alpha,R},$$
where $\alpha=\arg\de$.
\end{cor}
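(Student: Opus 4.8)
The plan is to follow the proofs of Proposition~\ref{prop:intJR} and Corollary~\ref{cor:IntJR2} almost verbatim; the exponent $\frac12(1-d(\de))$ is tailored precisely so that the singular integral rescales to one against $\mu^\dt_\de$. First I would record the scaling identity coming from the definition $\mu^\dt_\de(A)=|\de|^{-d(\de)/2}\mu_\de(\sqrt{|\de|}A)$, the relation $\mc N_{\de,R}=\sqrt{|\de|}\,\mc N^\dt_{\de,R}$, and the substitution $z=\sqrt{|\de|}\,w$ (so $|z|=\sqrt{|\de|}\,|w|$):
$$|\de|^{\frac12(1-d(\de))}\int_{\mc N_{\de,R}}\frac1{|z|}\,d\mu_\de(z)=\int_{\mc N^\dt_{\de,R}}\frac1{|w|}\,d\mu^\dt_\de(w),$$
so everything reduces to the limit of the right-hand side.

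The decisive geometric point is that the limit set $H^\dt_{\alpha,R}$ stays away from $0$: on it one has $|w|\greq|\re w|\greq\frac{\sqrt6}{3}\sin\frac\alpha2>0$ for every $\alpha\in(0,\pi]$. By the Hausdorff convergence $\mc N^\dt_{\de,R}\to H^\dt_{\alpha,R}$ of Proposition~\ref{prop:Hmetric}, there are therefore $c>0$ and $\eta>0$ such that $\mc N^\dt_{\de,R}\subset\{w\in\C:|w|\greq c\}$ for $0<|\de|<\eta$; moreover $\mc N^\dt_{\de,R}$ lies in a fixed bounded region, since $\mc J^\dt_\de\subset\{|\im w|\leeq2\}$ by Lemma~\ref{lem:sqrt|de|} and $|\re w|\leeq R$ on $\mc N^\dt_{\de,R}$. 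Hence $w\mapsto 1/|w|$ may be modified outside these sets to a compactly supported continuous function $g$ that agrees with $1/|w|$ on $\mc N^\dt_{\de,R}$ (for $|\de|<\eta$) and on $H^\dt_{\alpha,R}$.

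Finally I would combine Lemma~\ref{lem:mu/omega} with Proposition~\ref{prop:omega}. Since $\mc N_{\de,R}=\sqrt{|\de|}\,\mc N^\dt_{\de,R}$ shrinks to $\{0\}$ as $\de\to0$, for $|\de|$ small it lies inside the neighbourhood $U$ of $0$ from Lemma~\ref{lem:mu/omega}, so $(1-\ve)\frac2\pi\,\omega^\dt_\de\leeq\mu^\dt_\de\leeq(1+\ve)\frac2\pi\,\omega^\dt_\de$ on $\mc N^\dt_{\de,R}$ as measures. Integrating the nonnegative bounded function $1/|w|$ and using the weak$^*$ convergence $\omega^\dt_\de|_{\mc N^\dt_{\de,R}}\to l_{\alpha,R}$ tested against $g$, one gets, with $L:=\int_{H^\dt_{\alpha,R}}\frac1{|w|}\,dl_{\alpha,R}$,
\begin{multline*}
(1-\ve)\frac2\pi L\leeq\liminf_{\de\to0}\int_{\mc N^\dt_{\de,R}}\frac1{|w|}\,d\mu^\dt_\de\\
\leeq\limsup_{\de\to0}\int_{\mc N^\dt_{\de,R}}\frac1{|w|}\,d\mu^\dt_\de\leeq(1+\ve)\frac2\pi L.
\end{multline*}
Letting $\ve\to0$ gives the claim with $K_{\alpha,R}=\frac2\pi L$, which is finite because $H^\dt_{\alpha,R}$ has finite arc length and is separated from $0$; if an explicit value is wanted, the integral over $H^{\dt,+}_{\alpha,R}$ reduces, in polar coordinates, to the elementary integral $\int\frac{dt}{|\sin 2t|}$, exactly as in Corollary~\ref{cor:IntJR2}. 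The only genuine obstacle is that $1/|w|$ blows up at $0$, so one must exclude mass of $\mu^\dt_\de$ escaping toward the origin; this is precisely what the separation $\mc N^\dt_{\de,R}\subset\{|w|\greq c\}$ — the sole essential use of Proposition~\ref{prop:Hmetric} here — rules out.
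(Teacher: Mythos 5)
Your proposal is correct and follows the same route as the paper: rescale so the integral becomes one against $\mu^\dt_\de$ over $\mc N^\dt_{\de,R}$, pass to $\omega^\dt_\de$ via Lemma~\ref{lem:mu/omega}, and apply Proposition~\ref{prop:omega} to get the limit $\int_{H^\dt_{\alpha,R}}|w|^{-1}\,dl_{\alpha,R}$. Your extra care in noting that $\mc N^\dt_{\de,R}$ stays uniformly away from $0$ (via Proposition~\ref{prop:Hmetric}) so that $1/|w|$ can be replaced by a compactly supported continuous test function is a legitimate detail the paper leaves implicit; the only small slip is the pointer to Corollary~\ref{cor:IntJR2}, which treats $1/|z|^2$ and reduces to $\int(-\sin 2t)^{-1/2}\,dt$ rather than $\int|\sin 2t|^{-1}\,dt$, but since the statement only asks for existence of $K_{\alpha,R}$ this does not affect the argument.
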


\begin{proof}
Using Proposition \ref{prop:omega}, analogously as in the proof of Proposition \ref{prop:intJR} we get
\begin{equation*}
\frac{\sqrt{|\de|}}{\sqrt{|\de|}^{d(\de)}}\int_{\mc N_{\de,R}}\frac{1}{|z|}\, d\omega_{\de}(z)
=\int_{\mc N^\dt_{\de,R}}\frac{1}{|z|}\,d\omega^\dt_{\de}(z)\rightarrow\int_{ H^\dt_{\alpha,R}}\frac{1}{|z|}\,dl_{\alpha,R}(z).
\end{equation*}

For $\alpha\in(0,\pi)$, proceeding as before, we obtain
\begin{multline*}
\int_{ H^{\dt,+}_{\alpha,R}}\frac{1}{|z|}\,dl_{\alpha,R}(z) =\int_{\frac12(\alpha-\pi)}^{-\gamma_{\alpha,R}} \sqrt{-\frac32\frac{\sin(2t)}{\sin\alpha}}\sqrt{-\frac23\frac{\sin\alpha}{\sin^3(2t)}}\,dt\\= \int_{\frac12(\alpha-\pi)}^{-\gamma_{\alpha,R}}\frac{1}{|\sin(2t)|}\,dt=-\frac12\log|\tan t|\bigg|_{\frac12(\alpha-\pi)}^{-\gamma_{\alpha,R}}\\=\frac12\log\frac32+\log\frac{R}{\sin\frac\alpha2}=:\frac12K_{\alpha,R}.
\end{multline*}

If $\alpha=\pi$, we get
$$\int^{R}_{\frac{\sqrt{6}}{3}}\frac{1}{x}\,dx=\log R+\frac12\log\frac32=:\frac12K_{\pi,R},$$
and the proof is finished.
\end{proof}

\section{Integral over $\mc J_{\de_0}\sms N_R$}\label{sec;int2}

The main result of this Section is Proposition \ref{prop:int}, which shows us that the integral of (\ref{eq:f1}) over $\mc J_{\de_0}\sms N_R$ is small (after dividing by $|\de|^{1-\frac12d(\de)}$).
This result, together with Proposition \ref{prop:intphiJR} (integral over $N_R$) are the main ingredients in the proof of Theorem \ref{thm:minus2}.

But, we begin with a few facts about $(f_\de^n)'$, and Proposition \ref{prop:|phi|}, which will be used in the proofs of both mentioned Propositions.

Because $f_\de$ are conjugated to $z\mapsto \la_\de z$ on the set $B(2,r_\textrm{z})$ we conclude that:
\begin{lem}\label{lem:4^n}
For every $\alpha\in(0,2\pi)$ there exists $K>1$ such that if $z\in\mc C_{\de,n}^{\pm2}$, $n\greq1$ and $1\leeq j\leeq n$, then
$$K^{-1}|\la_\de|^j<|(f_\de^j)'(z)|<K|\la_\de|^j,$$
where $\alpha=\arg\de$ and $0<|\de|<\eta$.
\end{lem}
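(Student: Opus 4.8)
The plan is to exploit \eqref{eq:pfp}, which linearizes $f_\de$ near $p_\de$, i.e. $\Phi_\de\circ f_\de=\la_\de\Phi_\de$ on $B(p_\de,r_{\textrm{z}})$ with $\la_\de=2p_\de$, in order to compare $(f_\de^j)'(z)$ with $\la_\de^j$. Since $f_\de'(w)=2w$, for $z\in\mc C^{+2}_{\de,n}$ we have $f_\de^l(z)\in\mc C^{+2}_{\de,n-l}$ and $(f_\de^j)'(z)=\prod_{l=0}^{j-1}2f_\de^l(z)$, hence
$$\log\big|(f_\de^j)'(z)\big|-j\log|\la_\de|=\sum_{m=n-j+1}^{n}\log\Big|\frac{\zeta_m}{p_\de}\Big|,\qquad\zeta_m:=f_\de^{n-m}(z)\in\mc C^{+2}_{\de,m},$$
so it suffices to bound this sum by a constant depending only on $\alpha$. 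The case $z\in\mc C^{-2}_{\de,n}$ reduces to the one above: then $f_\de(z)\in\mc C^{+2}_{\de,n-1}$, while $|f_\de'(z)|=|2z|\asymp|\la_\de|$ because $z$ lies in a cylinder of index $\greq1$, and all such cylinders satisfy $c_0\leeq|z|\leeq3$ (high-index ones because they sit near $\pm p_\de\approx\pm2$, the finitely many low-index ones by Proposition \ref{prop:uniform}, the upper bound throughout from Lemma \ref{lem:sqrt|de|}).

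First I would fix a threshold $n_1=n_1(\alpha)\in\N$. After shrinking $r_{\textrm{z}}$, the relation $\hat\Phi_\de'(0)=1$ together with the analytic (in particular continuous) dependence of $\hat\Phi_\de$ on $\de\in B(0,r_\vartriangle)$ and \eqref{eq:PP} give $\tfrac12\leeq|\Phi_\de'|\leeq2$ on $B(p_\de,r_{\textrm{z}})$ and $|\la_\de|\greq3$ for $|\de|$ small. Since $\mc C^{+2}_{0,m}$ shrinks to $\{2\}$ as $m\to\infty$, I pick $n_1$ with $\mc C^{+2}_{0,n_1}\subset B(2,r_{\textrm{z}}/2)$; Proposition \ref{prop:uniform}, applied at the single fixed index $n_1$, then yields $\mc C^{+2}_{\de,n_1}\subset B(p_\de,r_{\textrm{z}})$ for $0<|\de|<\eta$ after shrinking $\eta$. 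From \eqref{eq:pfp} the branch $f_{\de,+}^{-1}$ acts on $B(p_\de,r_{\textrm{z}})$ as $\Phi_\de^{-1}\circ(\la_\de^{-1}\cdot)\circ\Phi_\de$, hence (using $|\la_\de|\greq3$ and the derivative bounds) maps this ball into itself; iterating, $\mc C^{+2}_{\de,m}=f_{\de,+}^{-(m-n_1)}(\mc C^{+2}_{\de,n_1})\subset B(p_\de,r_{\textrm{z}})$ for all $m\greq n_1$, and moreover $\Phi_\de(\zeta_m)\in\la_\de^{-(m-n_1)}\Phi_\de(\mc C^{+2}_{\de,n_1})$, whence $|\zeta_m-p_\de|\leeq C_0|\la_\de|^{-(m-n_1)}$.

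Now I split the telescoping sum at $m=n_1$. For $m\greq n_1$, since $|p_\de|$ is bounded below we get $\big|\log|\zeta_m/p_\de|\big|\leeq C_1|\la_\de|^{-(m-n_1)}$, and $\sum_{m\greq n_1}|\la_\de|^{-(m-n_1)}\leeq\sum_{k\greq0}3^{-k}<\infty$. For the remaining indices $1\leeq m\leeq n_1-1$ (all indices in the sum satisfy $m=n-l\greq n-(j-1)\greq1$), there are at most $n_1$ of them, and by the bound $c_0\leeq|\zeta_m|\leeq3$ from the first paragraph each such term is at most $|\log(c_0/3)|+\log3$. Adding the two contributions bounds the sum by a constant $K=K(\alpha)$, which is the assertion; the $\mc C^{-2}$ case follows as indicated.

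The main obstacle is the \emph{uniformity in $\de$} of the two geometric inputs — that $\mc C^{+2}_{\de,m}\subset B(p_\de,r_{\textrm{z}})$ for all $m\greq n_1$, and that cylinders of index $\greq1$ stay away from $0$ — since for a single $\de$ both are immediate but we need them over a whole punctured neighbourhood $0<|\de|<\eta(\alpha)$. The device for obtaining this is to invoke Proposition \ref{prop:uniform} only at the finitely many indices $1,\dots,n_1$, and then propagate inward using the contraction of $f_{\de,+}^{-1}$ toward $p_\de$ read directly from \eqref{eq:pfp}; the infinitely many high-index cylinders are then automatically close to $p_\de$, hence close to $2$.
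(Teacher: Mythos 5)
Your argument is correct, and it follows essentially the same route the paper takes: the paper justifies the lemma with the single observation that $f_\de$ is conjugated to $z\mapsto\la_\de z$ on $B(2,r_{\textrm{z}})$, and your proof (split the chain-rule product at a threshold index $n_1$, push high-index cylinders into $B(p_\de,r_{\textrm{z}})$ via iteration of $f_{\de,+}^{-1}$ read through $\Phi_\de$, and bound the finitely many low-index factors by Proposition~\ref{prop:uniform}) is precisely the detailed rendering of that one-liner. The only minor caveat is stylistic: the phrase ``high-index ones because they sit near $\pm p_\de$'' in your first paragraph forward-references the containment you only establish in the second, but the logic is sound once read in full.
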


If $U$ is a neighborhood of $0$ and $z\in\mc J_\de\sms U$, then $f_\de'(z)$ is close to $f_0'(\re z)$, where $\re z\in\mc J_0$ provided $|\re z|\leeq2$. The cylinders $C_{\de,\nu}\ni z$ and $C_{0,\nu}$ are close each other (cf. Lemma \ref{lem:Cnu}), so also finite trajectories $f_\de^n(z)$ and $f_0^n(\re z)$. Thus the formula (\ref{eq:(fn)'}) leads to:

\begin{lem}\label{lem:4-x^2}
For every $\alpha\in(0,2\pi)$, $\ve>0$, $N\in\N$ and open set $U\ni0$, there exists $\eta>0$ such that if $z\in\mc J_\de$, $|\re(z)|\leeq2$, $\{z,f_\de(z),\ldots,f_\de^{n-1}(z)\}\cap U=\emptyset$, $n\leeq N$, then
$$(1-\ve)\,2^n\sqrt{\frac{4-(\re f_0^n(z))^2}{4-(\re z)^2}}<|(f_\de^n)'(z)|<(1+\ve)\,2^n\sqrt{\frac{4-(\re f_0^n(z))^2}{4-(\re z)^2}},$$
where $0<|\de|<\eta$ and $\alpha=\arg\de$.
\end{lem}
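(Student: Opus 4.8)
The point of departure is the elementary identity $f_\de'(w)=2w$, which gives
$$|(f_\de^n)'(z)|=\prod_{k=0}^{n-1}\big|f_\de'(f_\de^k(z))\big|=2^n\prod_{k=0}^{n-1}|f_\de^k(z)|,$$
together with its counterpart for $f_0$ at the real point $x:=\re z\in[-2,2]$, which by (\ref{eq:(fn)'}) reads
$$2^n\prod_{k=0}^{n-1}|f_0^k(x)|=|(f_0^n)'(x)|=2^n\sqrt{\frac{4-(f_0^n(x))^2}{4-x^2}}.$$
The plan is therefore: show that, for $|\de|$ small enough in terms of $\alpha,\ve,N$ and $U$, the finite orbit $z,f_\de(z),\dots,f_\de^{n-1}(z)$ lies so close to the real orbit $x,f_0(x),\dots,f_0^{n-1}(x)$ that the two products above have ratio in $(1-\ve,1+\ve)$; and then to replace $f_0^n(x)$ by $\re f_0^n(z)$, which only perturbs the right-hand side by a factor $1+O(\sqrt{|\de|})$.

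First I would establish the orbit comparison by a direct induction on $k$. By Lemma \ref{lem:sqrt|de|} we have $|\im z|\leeq 2\sqrt{|\de|}$, hence $a_0:=|z-x|\leeq 2\sqrt{|\de|}$; moreover the Julia set is completely invariant, so $f_\de^k(z)\in\mc J_\de$ for every $k\greq0$, and, again by that Lemma, $\mc J_\de$ sits in a fixed bounded region once $|\de|\leeq1$, whereas $f_0^k(x)\in[-2,2]$. From $f_\de^{k+1}(z)-f_0^{k+1}(x)=(f_\de^k(z))^2-(f_0^k(x))^2+\de$ the numbers $a_k:=|f_\de^k(z)-f_0^k(x)|$ satisfy $a_{k+1}\leeq M a_k+|\de|$ with an absolute $M$, so $a_k\leeq C_N\sqrt{|\de|}$ for all $0\leeq k\leeq n\leeq N$, with $C_N$ depending on $N$ only (cf. the closeness of cylinders in Lemma \ref{lem:Cnu}). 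The same recursion with the $\de$-term suppressed also gives $|f_0^n(z)-f_0^n(x)|\leeq C_N'\sqrt{|\de|}$, hence $\re f_0^n(z)=f_0^n(x)+O(\sqrt{|\de|})$.

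Next I would turn the additive estimate into a multiplicative one. Choose $\rho>0$ with $B(0,\rho)\subset U$; the hypothesis $f_\de^k(z)\notin U$ for $0\leeq k\leeq n-1$ then yields $|f_\de^k(z)|\greq\rho$ and, via the previous step, $|f_0^k(x)|\greq\rho-C_N\sqrt{|\de|}\greq\rho/2$ for $|\de|$ small. Consequently
$$\Big|\frac{|f_\de^k(z)|}{|f_0^k(x)|}-1\Big|\leeq\frac{a_k}{|f_0^k(x)|}\leeq\frac{2C_N}{\rho}\sqrt{|\de|},$$
and since there are at most $N$ such factors, $\prod_{k=0}^{n-1}\frac{|f_\de^k(z)|}{|f_0^k(x)|}\in(1-\ve,1+\ve)$ once $|\de|$ is small. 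Combined with the two identities of the first paragraph this gives $|(f_\de^n)'(z)|=(1+o(1))\,2^n\sqrt{\frac{4-(f_0^n(x))^2}{4-x^2}}$ as $\de\to0$; and replacing $f_0^n(x)$ by $\re f_0^n(z)$ — legitimate because $4-(\re f_0^n(z))^2=(1+O(\sqrt{|\de|}))(4-(f_0^n(x))^2)$ when $\re z$ and $f_0^n(\re z)$ stay away from $\pm2$ — produces the asserted two-sided inequality.

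There is no genuine obstacle here; the one thing to monitor is \emph{uniformity} — a single $\eta$ must serve for every admissible $z$ and every $n\leeq N$ simultaneously — and this is automatic because all the constants involved ($C_N$, the lower bound $\rho$, and the number $\leeq N$ of factors in the products) are uniform in $z$ and $n$. The only mild nuisance is the degeneracy near $\pm2$, where both sides of the inequality blow up together and the statement is to be read with the linearization of Section \ref{sec:conj} near $p_\de$ in mind.
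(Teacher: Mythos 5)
Your proposal follows exactly the route the paper intends: compare the finite orbit of $z$ under $f_\de$ with that of $\re z$ under $f_0$ step by step, transfer the additive closeness to a multiplicative one for the products $\prod_{k<n}|f_\de^k(z)|$ versus $\prod_{k<n}|f_0^k(\re z)|$, and then invoke the identity~(\ref{eq:(fn)'}). The paper itself offers only the two-sentence heuristic preceding the lemma, so your version is a faithful and considerably more explicit fleshing-out of the same idea; the induction $a_{k+1}\leeq Ma_k+|\de|$ together with the uniform lower bound $|f_0^k(\re z)|\greq\rho/2$ (which uses the hypotheses $n\leeq N$ and $f_\de^k(z)\notin U$) is exactly right, and uniformity in $z$ and $n$ is indeed automatic.

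The only thing that needs a correction is your parenthetical description of the $\pm2$ degeneracy. You write that ``both sides of the inequality blow up together,'' but for fixed $n\leeq N$ neither side blows up: $|(f_\de^n)'(z)|\leeq(4+o(1))^n$, and the quotient $(4-(\re f_0^n(z))^2)/(4-(\re z)^2)$ is a $0/0$ form with a finite limit $\asymp4^n$ as $\re z\to\pm2$. The genuine delicacy is different and occurs at the endpoint of the orbit: when you pass from $2^n\sqrt{\tfrac{4-(f_0^n(\re z))^2}{4-(\re z)^2}}$ to the quantity with $\re f_0^n(z)$ in the numerator, you need
$\frac{4-(\re f_0^n(z))^2}{4-(f_0^n(\re z))^2}\to1$,
and your crude bound $|\re f_0^n(z)-f_0^n(\re z)|=O(\sqrt{|\de|})$ is not enough to conclude this when $f_0^n(\re z)$ is within $O(\sqrt{|\de|})$ of $\pm2$. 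To close this properly one should note that $\re f_0^{k+1}(z)-f_0(\re f_0^k(z))=-(\im f_0^k(z))^2$, so the real-part discrepancy is in fact $O(|\de|)$, and moreover by Lemma~\ref{lem:|de|}(\ref{lit:|de|1}) the imaginary part of $\mc J_\de$ is itself $O(|\de|^{17/16})$ once one is within $|\de|^{15/16}$ of $\pm p_\de$ — it is $2\sqrt{|\de|}$ only away from the endpoints. With these refinements the replacement goes through uniformly. Alternatively, one can observe that the paper only ever applies Lemma~\ref{lem:4-x^2} when $f_\de^n(z)$ lies in a cylinder near $0$ or in $\mc M^0_{\de,1}$, so $f_0^n(\re z)$ is bounded away from $\pm2$ and your argument already suffices there; but the lemma as stated claims more, and the endpoint case needs the extra care indicated above.

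Everything else is sound and matches the paper's method.
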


Lemma \ref{lem:4-x^2} for $n=i$, Lemma \ref{lem:mm} (\ref{lit:mm1}), and Lemma \ref{lem:4^n} for $n=j-1$ lead to:

\begin{cor}\label{cor:K}
For every $\alpha\in(0,2\pi)$, $N\in\N$ there exists $K>1$ and $\eta>0$ such that if $z\in f_\de^{-i}(\mc C^{0}_{\de,m})$, $m\greq0$, then
$$|(f_\de^{i+j})'(z)|>K^{-1}2^i|\la_\de|^{j-1-\frac{m}{2}},$$
where $0\leeq i \leeq N$, $0\leeq j\leeq m$, $0<|\de|<\eta$ and $\alpha=\arg\de$.
\end{cor}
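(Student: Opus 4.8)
\textbf{Proof proposal for Corollary \ref{cor:K}.}

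The plan is to split the derivative $(f_\de^{i+j})'(z)$ at the point $z\in f_\de^{-i}(\mc C^0_{\de,m})$ into two chain-rule factors: the first $i$ iterates, which carry $z$ off the small neighborhood of $0$ and into the cylinder $\mc C^0_{\de,m}$, and the next $j$ iterates, which act starting from a point of $\mc C^0_{\de,m}$. Writing $w:=f_\de^i(z)\in\mc C^0_{\de,m}$, the chain rule gives
$$|(f_\de^{i+j})'(z)|=|(f_\de^j)'(w)|\cdot|(f_\de^i)'(z)|,$$
so it suffices to bound each factor from below.

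For the second factor $|(f_\de^i)'(z)|$: since $z\in f_\de^{-i}(\mc C^0_{\de,m})$ and the finitely many iterates $z,f_\de(z),\dots,f_\de^{i-1}(z)$ all lie in $\mc J_\de$ and (for $\de$ small, by Proposition \ref{prop:Hmetric} together with Lemma \ref{lem:mm}) avoid a fixed neighborhood $U\ni 0$, Lemma \ref{lem:4-x^2} applies with $n=i\leeq N$. It gives $|(f_\de^i)'(z)|$ comparable to $2^i\sqrt{(4-(\re f_0^i(z))^2)/(4-(\re z)^2)}$. The denominator $4-(\re z)^2$ is bounded above by $4$, and the numerator $4-(\re f_0^i(z))^2$ is bounded below away from $0$ precisely because $f_0^i(z)$ (being close to $\re f_\de^i(z)=\re w$, a point of the cylinder $\mc C^0_{\de,m}$, hence near $0$) stays away from $\pm2$; thus $|(f_\de^i)'(z)|\greq K^{-1}2^i$ for a uniform $K$ depending only on $\alpha$ and $N$.

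For the first factor $|(f_\de^j)'(w)|$ with $w\in\mc C^0_{\de,m}$ and $0\leeq j\leeq m$: here I would mimic the argument already used inside the proof of Lemma \ref{lem:mm}. By Lemma \ref{lem:mm}\,(\ref{lit:mm1}) we have $|w|>K^{-1}|\la_\de|^{-m/2}$, so $|f_\de'(w)|=2|w|>K^{-1}|\la_\de|^{-m/2}$, handling $j=1$ (and $j=0$ trivially). For larger $j$, recall from the proof of Lemma \ref{lem:mm} that $f_\de^{m-n_0+2}$ maps $\mc C^0_{\de,m}$ with bounded distortion onto the fixed set $f_\de^2(\mc C^0_{\de,n_0})$, via the linearizing coordinate near $p_\de$; in particular for $1\leeq j\leeq m$ the remaining iterates $f_\de(w),\dots,f_\de^{j-1}(w)$ lie in cylinders $\mc C^{\pm2}_{\de,\cdot}$ near $p_\de$ or $-p_\de$, where Lemma \ref{lem:4^n} gives $|(f_\de^{j-1})'(f_\de(w))|\asymp|\la_\de|^{j-1}$. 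Combining,
$$|(f_\de^j)'(w)|=|(f_\de^{j-1})'(f_\de(w))|\cdot|f_\de'(w)|\greq K^{-1}|\la_\de|^{j-1}\cdot K^{-1}|\la_\de|^{-m/2}=K^{-2}|\la_\de|^{j-1-m/2}.$$
Multiplying the two factors and relabelling constants yields $|(f_\de^{i+j})'(z)|>K^{-1}2^i|\la_\de|^{j-1-m/2}$, as claimed.

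The main obstacle is bookkeeping the uniformity of all the constants in $\de$ (and in $m$): one must check that the neighborhood $U$ avoided by the first $i$ iterates can be chosen independently of $m$ and $\de$ — this is where Proposition \ref{prop:P} (postcritical set stays away from $0$) and Proposition \ref{prop:Hmetric} (rescaled location of $\mc N_{\de,R}$) are invoked — and that the bounded-distortion constant for $f_\de^{m-n_0+2}$ on $\mc C^0_{\de,m}$ does not degenerate as $m\to\infty$, which follows from the conjugacy to $z\mapsto\la_\de z$ near $p_\de$ exactly as in Lemma \ref{lem:mm}. The case distinction $j=0$ is immediate and the case $j\greq1$ with small $m$ is absorbed by enlarging $K$, just as at the end of the proof of Lemma \ref{lem:mm}.
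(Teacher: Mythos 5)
Your proposal follows essentially the same route as the paper's one-line justification (``Lemma~\ref{lem:4-x^2} for $n=i$, Lemma~\ref{lem:mm}~(\ref{lit:mm1}), and Lemma~\ref{lem:4^n} for $n=j-1$ lead to''): chain rule to split $(f_\de^{i+j})'(z)$ at time $i$, Lemma~\ref{lem:4-x^2} for the first block, and Lemma~\ref{lem:mm}~(\ref{lit:mm1}) together with Lemma~\ref{lem:4^n} for the second block (the ``bounded distortion'' detour is unnecessary: one simply writes $|(f_\de^j)'(w)|=|(f_\de^{j-1})'(f_\de(w))|\cdot 2|w|$ with $f_\de(w)\in\mc C^{-2}_{\de,m-1}$, and applies Lemma~\ref{lem:4^n} directly since $j-1\leeq m-1$).

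Two points in your write-up are not quite right, though.  First, the claim that $\re w$ is ``near $0$, hence stays away from $\pm 2$'' is false for $m=0$: the cylinder $\mc C^{0}_{\de,0}$ accumulates at $\pm p_\de$, so $4-(\re w)^2$ need not be bounded below there.  For $m=0$ only $j=0$ is permitted and the conclusion is correspondingly weak, but your argument as written does not cover it; one has to treat $m=0$ separately (or, as the paper does implicitly, only ever invoke the corollary with $m\greq 2$).  Second, the assertion that the orbit $z,\dots,f_\de^{i-1}(z)$ avoids a fixed neighbourhood $U\ni 0$ is the crux, and the references you cite (Propositions~\ref{prop:P} and \ref{prop:Hmetric}) do not yield it.  What actually does the work is a combinatorial fact about the cylinder structure: if $f_\de^i(z)\in\mc C^{0}_{\de,m}$ with $m\greq 1$, then any time $\ell'<i$ at which $f_\de^{\ell'}(z)\in\mc C^{0}_{\de,m''}$ with $m''\greq 1$ must satisfy $m''\leeq i\leeq N$ (because from $\mc C^{0}_{\de,m''}$ the orbit spends exactly $m''$ steps climbing through $\pm 2$ cylinders before re-entering $\mc C^{\pm 2}_{\de,0}$, and exceeding $i$ here would force $f_\de^i(z)$ into a $\mc C^{\pm 2}$ cylinder of level $\greq 1\subset\mc C^0_{\de,0}$, contradicting $m\geq 1$).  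Lemma~\ref{lem:mm}~(\ref{lit:mm1}) then gives a lower bound $|f_\de^{\ell'}(z)|>K^{-1}|\la_\de|^{-N/2}$ uniform in $\de$, which is the correct justification for the hypothesis of Lemma~\ref{lem:4-x^2}.  Note in passing that without the restriction $m\greq 1$ this claim is genuinely false (take $z\in\mc C^{0}_{\de,m'}$ with $m'$ large, $i=1$, $m=0$), so the corollary as stated should be read with the implicit convention that the paper uses it in (where in fact $m\greq 2$).
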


\begin{prop}\label{prop:|phi|}
For every $\alpha\in(0,\pi]$ and $s>0$, we have
   $$\lim_{\de\rightarrow0}|\de|^s\int_{\mc J_{\de_0}}\big|\dot\vp_\de\big|\,d\tilde{\mu_{\de}}= 0,$$
where $\alpha=\arg\de$.
\end{prop}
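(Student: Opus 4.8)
\emph{Step 1: a series for $\dot\vp_\de$.} For each fixed $\de\in\mc U$ the map $f_\de$ is expanding on $\mc J_\de$, so $|(f_\de^m)'(\vp_\de(s))|\to\infty$ as $m\to\infty$, while $\dot\vp_\de$ is continuous, hence bounded, on the compact set $\mc J_{\de_0}$. Thus the remainder term in (\ref{eq:sumas}) tends to $0$ and
\[
\dot\vp_\de(s)=-\sum_{k=1}^{\infty}\frac{1}{(f_\de^k)'(\vp_\de(s))},\qquad\text{so}\qquad \big|\dot\vp_\de(s)\big|\leeq\sum_{k=1}^{\infty}\frac{1}{|(f_\de^k)'(\vp_\de(s))|}.
\]
Pushing this forward by $\vp_\de$ (recall $\mu_\de=(\vp_\de)_*\tilde\mu_\de$), it is enough to bound $\sum_{k\greq1}A_k$, where $A_k:=\int_{\mc J_\de}|(f_\de^k)'(z)|^{-1}\,d\mu_\de(z)$. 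Since we only need $|\de|^s\sum_k A_k\to0$ for every $s>0$, it suffices to show that $\sum_{k\greq1}A_k$ grows no faster than a fixed power of $\log(1/|\de|)$ as $\de\to0$.

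\emph{Step 2: estimating $A_k$.} Write $|(f_\de^k)'(z)|^{-1}=\prod_{l=0}^{k-1}\big(2|f_\de^l(z)|\big)^{-1}$ and decompose $\mc J_\de$ according to the itinerary of $z,f_\de(z),\dots,f_\de^{k-1}(z)$ relative to a fixed small neighbourhood of $0$ and neighbourhoods of $\pm p_\de$, i.e.\ relative to the cylinders $\mc C_{\de,n}^0$. Along stretches of the orbit near $\pm p_\de$ the factors are $\asymp|\la_\de|^{-1}$, along stretches away from $0$ they are bounded, and the only large factors arise when $f_\de^l(z)$ is close to $0$, where a factor is at most $\asymp(c_\alpha\sqrt{|\de|})^{-1}$ because $\dist(0,\mc J_\de)\greq\tfrac12 c_\alpha\sqrt{|\de|}$ for some $c_\alpha>0$ (by Proposition \ref{prop:Hmetric}, using $|z|\greq\re z\greq\tfrac{\sqrt6}{3}\sin\tfrac\alpha2$ on $H^{\dt,+}_{\alpha,R}$). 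Two mechanisms now cooperate: first, by Lemma \ref{lem:4^n} and Corollary \ref{cor:K}, each excursion of the orbit between two consecutive visits to the neighbourhood of $0$ contributes to $|(f_\de^k)'(z)|^{-1}$ an overall factor bounded by a fixed $q<1$ to a positive power, so only the \emph{last} near-$0$ visit produces a genuinely large factor; second, by the invariance of $\mu_\de$, the distortion estimates of Lemma \ref{lem:4^n}, and the measure estimates of Section \ref{sec:measures} (e.g.\ $\mu_\de(\mc C_{\de,n}^{+2})\asymp|\la_\de|^{-nd(\de)}$), the $\mu_\de$-measure of the set of $z$ realising a given itinerary decays geometrically in the total excursion length. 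Summing the product of these two quantities over all itineraries is precisely the type of computation already performed for $\int_{\mc N_{\de,R}}|z|^{-1}\,d\mu_\de$ in Corollary \ref{cor:IntJR} (whose limit, up to the factor $|\de|^{-\frac12(1-d(\de))}\to1$, is finite); it yields $A_k\leeq C\,q_1^{\,k}\log(1/|\de|)$ with $q_1<1$ and $C$ independent of $\de$. (Equivalently, $A_k$ is a Birkhoff-weighted mass governed by the pressure $P(f_\de,-(1+d(\de))\log|f_\de'|)$, which tends to $P(f_0,-2\log|f_0'|)=-\log2<0$ as $\de\to0$.) Hence $\sum_{k\greq1}A_k\leeq C'\log(1/|\de|)$, and since $|\de|^s\log(1/|\de|)\to0$ for every $s>0$, the Proposition follows.

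\emph{The main obstacle.} The difficulty is that the family degenerates as $\de\to0$: $\mc J_\de$ accumulates on $[-2,2]\ni0$, so $f_\de$ is only weakly expanding near $0$ and the distortion/expansion constants are not uniform in $\de$. A crude pointwise bound gives only $|\dot\vp_\de|\lesssim|\de|^{-1/2}$ on the part of $\mc J_\de$ near $0$, which by itself would yield $|\de|^s\int|\dot\vp_\de|\,d\tilde\mu_\de\to0$ only for $s>\tfrac12$. The crux is that $|\dot\vp_\de|$ attains size $\asymp|\de|^{-1/2}$ only on a set of $\mu_\de$-measure $\asymp\sqrt{|\de|}$, and more generally its large values are confined to small neighbourhoods of the backward preimages of $0$ whose total mass decays fast enough; making this quantitative — balancing the loss of expansion along near-$0$ excursions against the smallness of $\mu_\de$ there, uniformly in $\de$, with Lemma \ref{lem:4^n}, Corollary \ref{cor:K}, Lemma \ref{lem:mu/omega}, Proposition \ref{prop:omega} and Corollaries \ref{cor:IntJR}, \ref{cor:IntJR2} — is the technical heart. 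One must also treat separately the cylinders $\mc C_{\de,n}^0$ below and above the transition index $n_*\asymp\log(1/|\de|)$ at which $|\la_\de|^{-n/2}\asymp\sqrt{|\de|}$, since above it Lemma \ref{lem:mm}(\ref{lit:mm2}) no longer gives two-sided control of $|z|$.
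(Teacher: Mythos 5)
Your starting point differs genuinely from the paper's. The paper keeps the finite-$m$ formula (\ref{eq:sumas}) and runs a bootstrap: it produces a self-referential inequality $\int|\dot\vp_\de|\,d\tilde\mu_\de < K_1(N_0)+K_2(N_0)+(\tfrac1{10}+\tfrac58)\int|\dot\vp_\de|\,d\tilde\mu_\de+2|\de|^{-s/2}$ and solves it, exploiting that the contraction coefficient $\tfrac1{10}+\tfrac58<1$. You instead let $m\to\infty$ in (\ref{eq:sumas}), write $\dot\vp_\de=-\sum_{k\greq1}(f_\de^k)'(\vp_\de)^{-1}$, and try to sum $A_k=\int|(f_\de^k)'|^{-1}d\mu_\de$ directly. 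That letting $m\to\infty$ is legitimate (for each fixed $\de$ the map is expanding and $\dot\vp_\de$ is bounded). The idea of avoiding the bootstrap and controlling the series head-on is a legitimate alternative route.

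However, the load-bearing claim of Step 2, namely $A_k\leeq C\,q_1^k\log(1/|\de|)$ uniformly in $\de$, is asserted rather than proved, and as stated it is actually \emph{stronger} than what the paper's machinery delivers. In the paper's Step 6, summing $|\la_\de|^{\frac12(1-d(\de))n}$ over $n$ up to $n_*\asymp\log(1/|\de|)$ gives $O(|\de|^{-s/2})$ (using only $1-d(\de)<s/2$ for $\de$ small), not $O(\log(1/|\de|))$. A $\log$ bound would require knowing a priori that $1-d(\de)=O(1/\log(1/|\de|))$, which is not established (and is not far from what the whole paper is trying to prove). You should be aiming for the weaker $\sum_k A_k=O(|\de|^{-s/2})$, which still suffices since $|\de|^s\cdot|\de|^{-s/2}\to0$; with the $\log$ claim your argument proves something it cannot. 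Moreover, the two "mechanisms" you invoke --- that only the last near-$0$ visit produces a genuinely large factor, and that the $\mu_\de$-mass of deep excursions decays geometrically --- are exactly what the paper's Steps 3--7 establish via the cylinder decomposition $\mc G_{\de,n}^{-N_0}$, Lemma~\ref{lem:4^n}, Corollary~\ref{cor:K}, and the measure estimate $\mu_\de(\mc C_{\de,n}^0)\asymp|\la_\de|^{-d(\de)n}$; the uniform rate $q_1<1$ across all $\de$ near $0$, in the presence of expansion degenerating like $\sqrt{|\de|}$ near $0$, is precisely the point of the paper's $N_0$-iterate scheme ($|(f_\de^{N_0})'|>10$ on $\mc X_{\de,N_0}$). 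The pressure heuristic $P(f_\de,-(1+d(\de))\log|f_\de'|)\to-\log2$ is a correct guide, but uniform spectral control of the transfer operator as $\de\to0$ (the map is only weakly hyperbolic near $0$) is not automatic, so it does not substitute for the cylinder estimates. In short: your reduction to integrals of $|z|^{-1}$ over $\mc N_{\de,R}$ (Corollary~\ref{cor:IntJR}) and your identification of the transition index $n_*$ match the paper's ingredients, but the geometric-in-$k$ bound with a mere $\log$ loss is a genuine gap.
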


\begin{proof}
Fix $\alpha\in(0,\pi]$, $s>0$ and $N_0\greq4$ such that $2^{N_0}\greq8K$, where $K$ is a constant from Corollary \ref{cor:K}.

\emph{Step 1.} First, we define the set $\mc X_{\de,N_0}$, on which it can be easily proven that $|(f_\de^{N_0})'|>10$.

If $f_\de^{N_0}(z)\in \mc M^0_{\de,1}$ (note that $\mc M^0_{0,1}=[-\sqrt2,\sqrt2]$), then Lemma \ref{lem:4-x^2} gives us
\begin{equation*}
|(f_\de^{N_0})'(z)|>2^{N_0}\frac{9}{10}\,\sqrt{\frac24}\greq2^4\frac{9}{10}\,\frac{\sqrt2}{2}>10.
\end{equation*}
The derivative will be also grater than $10$ for $z\in \mc C_{\de,N_0+n}^{\pm2}$, $n\greq1$ and $z=\pm p_\de$, therefore we define
$$\mc X_{\de,N_0}:=f_\de^{-N_0}(\mc M^0_{\de,1})\cup\bigcup_{n=1}^{\infty}\mc C_{\de,N_0+n}^{\pm2}\cup\{-p_\de,p_\de\}.$$
Write $X_{N_0}:=\vp_\de^{-1}(\mc X_{\de,N_0})$.
Let $s\in X_{\de,N_0}$, then (\ref{eq:sumas}) for $m=N_0$, gives us:
$$\dot\vp_\de(s)=-\sum_{k=1}^{N_0}\frac{1}{(f_\de^k)'(\vp_\de(s))}+\frac{\dot\vp_\de(T^{N_0}(s))}{(f_\de^{N_0})'(z)}.$$
Since $f_\de^{N_0}(z)\in\mc M^0_{\de,1}$, the trajectory $\{z,f_\de(z),\ldots,f_\de^{{N_0}-1}(z)\}$ is disjoint from $\mc M^0_{\de,N_0}$, hence is separated from $0$.
Thus, the derivatives $(f_\de^k)'(\vp_\de(s))$ are also separated from $0$ (by a constant depending on $N_0$). So, the fact that the measure $\tilde{\mu_\de}$ is $T$-invariant, leads to
\begin{multline}\label{eq:s1}
\int_{X_{N_0}}|\dot\vp_\de|\,d\tilde{\mu_\de}<K_1(N_0)+\frac{1}{10}\int_{X_{N_0}}|\dot\vp_\de(T^{N_0})|\,d\tilde{\mu_\de}\\
<K_1(N_0)+\frac{1}{10}\int_{T^{N_0}(X_{N_0})}|\dot\vp_\de|\,d\tilde{\mu_\de} <K_1(N_0)+\frac{1}{10}\int_{\mc J_{\de_0}}|\dot\vp_\de|\,d\tilde{\mu_\de}.
\end{multline}

\emph{Step 2.} Now, we will deal with the set
   $$\mc J_{\de}\sms \mc X_{\de,N_0}=\bigcup_{n=1}^\infty (f_\de^{-N_0}(\mc C_{\de,n}^{\pm2})\sms\mc C_{\de,n+N_0}^{\pm2})=\bigcup_{n=1}^\infty \mc G_{\de,n}^{-N_0},$$
where $\mc G_{\de,n}^{-N_0}:=f_\de^{-N_0}(\mc C_{\de,n}^{\pm2})\sms\mc C_{\de,n+N_0}^{\pm2}$. Let $G_{\de,n}^{-N_0}:=\vp_{\de}^{-1}(\mc G_{\de,n}^{-N_0})$.

If $\vp_\de(s)=z\in\mc G_{\de,n}^{-N_0}$, then we will use the formula (\ref{eq:sumas}) for $m=N_0+n$:
\begin{equation}\label{eq:ff}
\dot\vp_\de(s)=-\sum_{j=1}^{N_0+n}\frac{1}{(f_\de^j)'(\vp_\de(s))}+\frac{\dot\vp_\de(T^{N_0+n}(s))}{(f_\de^{N_0+n})'(z)}.
\end{equation}

Before we pass to estimations, let us rewrite the set $\mc G_{\de,n}^{-N_0}$.
We have
\begin{equation*}
f_\de^{-{N_0}}(\mc C_{\de,n}^{-2})=f_\de^{-(N_0-1)}(f_\de^{-1}(\mc C_{\de,n}^{-2}))=f_\de^{-(N_0-1)}(\mc C^{0}_{\de,n+1}),
\end{equation*}
and next
\begin{multline*}
f_\de^{-{N_0}}(\mc C_{\de,n}^{+2})=\mc C_{\de,n+N_0}^{+2}\cup\bigcup_{k=0}^{N_0-1} f_\de^{-({N_0}-k-1)}(f_{\de,-}^{-1}(\mc C_{\de,n+k}^{+2}))\\
=\mc C_{\de,n+N_0}^{+2}\cup  \mc C_{\de,n+N_0}^{-2}\cup\bigcup_{k=0}^{N_0-2} f_\de^{-({N_0}-k-2)}(\mc C^{0}_{\de,n+k+2}).
\end{multline*}
Therefore
\begin{equation}\label{eq:Gn}
\mc G_{\de,n}^{-N_0}=\bigcup_{k=-1}^{N_0-2} f_\de^{-({N_0}-k-2)}(\mc C^{0}_{\de,n+k+2})=\bigcup_{k=1}^{N_0} f_\de^{-({N_0}-k)}(\mc C^{0}_{\de,n+k}).
\end{equation}

\emph{Step 3.} Now we will estimate integral of the "tail" of (\ref{eq:ff}).
If $z\in\mc G_{\de,n}^{-N_0}$, then we see that there exists $k\in\{1,\ldots,N_0\}$ such that $f_\de^{N_0-k}(z)\in\mc C_{\de,n+k}$. Thus,
Corollary \ref{cor:K} for $N=N_0-1$, $i=N_0-k$, $m=n+k$ and $j=n+k$, leads to
\begin{align*}
|(f_\de^{{N_0}+n})'(z)|&>K^{-1}2^{N_0-k}|\la_\de|^{\frac12(n+k)-1}\\
&=K^{-1} 2^{N_0}|\la_\de|^{\frac{n}{2}-1}\Big(\frac{\sqrt{|\la_\de|}}{2}\Big)^k
>\frac{9}{40}K^{-1}2^{N_0} \Big(\frac{19}{10}\Big)^{n}.
\end{align*}
Therefore
\begin{align*}
    \int_{G_n^{-N_0}}\bigg|\frac{\dot\vp_\de(T^{N_0+n})}{(f_\de^{N_0+n})'(\vp_\de)}\bigg|\,d\tilde{\mu_\de} &<\frac{40}{9}K2^{-N_0}\Big(\frac{10}{19}\Big)^{n}\int_{ G_{n}^{-N_0}}|\dot\vp_\delta(T^{{N_0}+n})|\,d\tilde{\mu_\de}\\
&<\frac{40}{9}K2^{-N_0}\Big(\frac{10}{19}\Big)^{n}\int_{\mc J_{\de_0}}|\dot\vp_\delta|\,d\tilde{\mu_\de}.
\end{align*}
By assumption, $2^{N_0}>8K$, we get
$$\frac{40}{9}K2^{-N_0}\sum_{n=1}^{\infty}\Big(\frac{10}{19}\Big)^{n}= 4K2^{-N_0}\Big(\frac{10}{9}\Big)^2<\frac12\Big(\frac{10}{9}\Big)^2<\frac58.$$
Thus we obtain
\begin{equation}\label{eq:s3}
\sum_{n=1}^{\infty}\int_{G_n^{-N_0}}\bigg|\frac{\dot\vp_\de(T^{N_0+n})}{(f_\de^{N_0+n})'(\vp_\de)}\bigg|\,d\tilde{\mu_\de} <\frac58\int_{\mc J_{\de_0}}|\dot\vp_\delta|\,d\tilde{\mu_\de}.
\end{equation}

\emph{Step 4.}
We have to estimate finite sum from (\ref{eq:ff}). If $\vp_\de(s)=z\in\mc G_{\de,n}^{-N_0}$ and $f_\de^{N_0-k}(z)\in\mc C^{0}_{\de,n+k}$ (cf. (\ref{eq:Gn})), then we can write
\begin{equation}\label{eq:fff}
-\sum_{j=1}^{N_0+n}\frac{1}{(f_\de^j)'(z)}=-\sum_{j=1}^{{N_0}-k}
\frac{1}{(f_\delta^j)'(z)}-\sum_{j=N_0-k+1}^{N_0+n}
\frac{1}{(f_\de^{j})'(z)}.
\end{equation}

Now we will estimate first sum on the right. If $k=N_0$, then we will assume that it is equal to $0$.

Since $f_\de^{{N_0}-k}(z)\in \mc C^{0}_{\de,n+k}$, the trajectory $\{z,f_\de(z),\ldots,f_\de^{{N_0}-k-1}(z)\}$ is disjoint from $\mc M^0_{\de,N_0}$, hence is separated from $0$. Therefore the derivatives are separated from $0$, and the sum is bounded by a constant $K_2(N_0)/4$. Thus we have
\begin{equation}\label{eq:s4}
\sum_{n=1}^{\infty}\sum_{k=1}^{N_0}\int_{f_\de^{-{N_0}+k}(\mc C^{0}_{\de,n+k})}\sum_{j=1}^{{N_0}-k}
\bigg|\frac{1}{(f_\delta^j)'(z)}\bigg|\,d\mu_\de(z)<K_2(N_0).
\end{equation}

\emph{Step 5.} The rightmost sum from (\ref{eq:fff}) can be rewritten in the form
\begin{multline*}
-\sum_{j=N_0-k+1}^{N_0+n}
\frac{1}{(f_\de^{j})'(z)}=-\sum_{j=1}^{n+k}
\frac{1}{(f_\de^{j})'(f_\de^{N_0-k}(z))\cdot(f_\de^{N_0-k})'(z)}\\=
-\frac{1}{(f_\de^{N_0-k})'(z)}\cdot\frac{1}{f_\de'(f_\de^{N_0-k}(z))}\bigg(1+\sum_{j=1}^{n+k-1}
\frac{1}{(f_\de^{j})'(f_\de^{N_0-k+1}(z))}\bigg).
\end{multline*}

If $f_\de^{N_0-k}(z)\in\mc C^{0}_{\de,n+k}$ then $|(f_\de^{N_0-k})'(z)|>1$ (cf. Lemma \ref{lem:4-x^2}).
Moreover
$f_\de^{N_0-k+1}(z)\in\mc C_{\de,n+k-1}^{-2}$, so using Lemma \ref{lem:4^n}, we can find $K_3>1$ such that
\begin{equation}\label{eq:n-k+1}
   \bigg|\sum_{j=N_0-k+1}^{N_0+n}
   \frac{1}{(f_\de^{j})'(z)}\bigg|<2K_3\frac{1}{|f_\de'(f_\de^{N_0-k}(z))|}=K_3\frac{1}{|f_\de^{N_0-k}(z)|}.
\end{equation}

\emph{Step 6.} In order to estimate the above expression, we will consider two cases.
First, we will deal with the sets $\mc G_{\de,n}^{-N_0}$ such that if $z\in\mc G_{\de,n}^{-N_0}$, then the trajectory $\{z,\ldots,f_\de^{N_0-1}(z)\}$ is disjoint from $\mc N_{\de,1}$. It is enough to check if $z\in\mc C^{0}_{\de,n+N_0}$ then $|\re(z)|>\sqrt{|\de|}$. Since $|z|< K|\lambda_\de|^{-\frac12(n+N_0)}$ (see Lemma \ref{lem:mm} (\ref{lit:mm2})), we get $\sqrt{|\de|}< K|\lambda_\de|^{-\frac12(n+N_0)}$, and then $1\leeq n<\log_{|\lambda_\de|}\big(\frac{K^2}{|\de|}\big)-N_0$.

We have
\begin{equation}\label{eq:f-N_0}
   K_3\int_{f_\de^{-{N_0}+k}(\mc C^{0}_{\de,n+k})} \frac{1}{|f_\de^{N_0-k}(z)|}\,d \mu_\de(z)=K_3\int_{\mc C^{0}_{\de,n+k}}\frac{1}{|z|}\,d \mu_\de(z).
\end{equation}
So, using (\ref{eq:n-k+1}), (\ref{eq:f-N_0}), the fact that $\mu_\de(\mc C^{0}_{\de,n})\asymp\omega_\de(\mc C^{0}_{\de,n})\asymp(\diam(\mc C^{0}_{\de,n}))^{d(\de)}$ and Lemma \ref{lem:mm} we obtain
\begin{multline*}
   \sum_{k=1}^{N_0}\int_{f_\de^{-{N_0}+k}(\mc C^{0}_{\de,n+k})}\bigg|\sum_{j=N_0-k+1}^{N_0+n}
   \frac{1}{(f_\de^{j})'(z)}\bigg|d{\mu_\de}(z)<
   K_3\sum_{k=1}^{N_0}\int_{\mc C^{0}_{\de,n+k}}\frac{1}{|z|}d \mu_\de(z)\\<K_4\sum_{k=1}^{N_0}|\la_\de|^{\frac12(n+k)}|\la_\de|^{-\frac12d(\de)(n+k)} < K_5({N_0})|\la_\de|^{\frac{1}{2}(1-d(\de))n}.
\end{multline*}
We can assume that $1-d(\de)\leeq|1-d(\de)|<s/2$, thus the integral over all sets $\mc G_{\de,n}^{-N_0}$, where $1\leeq n\leeq\log_{|\lambda_\de|}\big(\frac{K^2}{|\de|}\big)-N_0$ can be estimated by
\begin{multline}\label{eq:s5}
   K_5(N_0)\sum_{n=1}^{\log_{|\lambda_\de|}\big(\frac{K^2}{|\de|}\big)-N_0}|\lambda_\de|^{\frac12|1-d(\de)|n}< K_5(N_0)\Big(\frac{K^2}{|\de|}\Big)^{\frac{s}{4}}\log_{|\lambda_\de|}\Big(\frac{K^2}{|\de|}\Big) \\<|\de|^{-\frac{s}{4}}|\de|^{-\frac{s}{4}}
=|\de|^{-\frac{s}{2}}.
\end{multline}

\emph{Step 7.} Let $n_\de$ be the smallest number such that there exists $z\in\mc G_{\de,n_\de}^{-N_0}$ for which $\{z,\ldots,f_\de^{N_0-1}(z)\}\cap\mc N_{\de,1}\neq\emptyset$, hence $\mc C^{0}_{\de,n_\de+N_0}\cap\mc N_{\de,1}\neq\emptyset$. Thus, there exists $R>1$ ($R\asymp 2^{N_0}$) such that $\mc M^0_{\de,n_\de}\subset\mc N_{\de,R}$. So, using (\ref{eq:Gn}), we obtain
   $$\bigcup_{n=n_\de}^{\infty}\mc G_{\de,n}^{-N_0}\subset\bigcup_{k=1}^{N_0}f_\de^{-N_0+k}(\mc M^0_{\de,n_\de})\subset\bigcup_{k=1}^{N_0}f_\de^{-N_0+k}(\mc N_{\de,R}).$$

Using (\ref{eq:n-k+1}), (\ref{eq:f-N_0}) (analogously as in the \emph{Step 5}), and next Corollary \ref{cor:IntJR}, we obtain
\begin{multline}\label{eq:s6}
\sum_{k=1}^{N_0}\int_{f_\de^{-N_0+k}(\mc N_{\de,R})}\bigg|\sum_{j=N_0-k+1}^{N_0+n}
\frac{1}{(f_\de^{j})'(z)}\bigg|d{\mu_\de}(z)
\\
<K_3\sum_{k=1}^{N_0}\int_{f_\de^{-N_0+k}(\mc N_{\de,R})} \frac{1}{|f_\de^{N_0-k}(z)|}\,d \mu_\de(z)=K_3\cdot N_0\int_{\mc N_{\de,R}}\frac{1}{|z|}\,d \mu_\de(z)<|\de|^{-\frac{s}{2}}.
\end{multline}

\emph{Step 8.}
Estimates (\ref{eq:s1}), (\ref{eq:s3}), (\ref{eq:s4}), (\ref{eq:s5}) and (\ref{eq:s6}) give us
\begin{equation*}
    \int_{\mc J_{\de_0}}\big|\dot\vp_\de\big|\,d\tilde{\mu_\de}<K_1(N_0)+K_2(N_0)+\Big(\frac{1}{10}+\frac58\Big) \int_{\mc J_{\de_0}}\big|\dot\vp_\de\big|\,d\tilde{\mu_\de} +2|\de|^{-\frac{s}{2}}.
\end{equation*}
Since $\frac{1}{10}+\frac{5}{8}<\frac34$, we obtain
\begin{equation*}
    \frac14\int_{\mc J_{\de_0}}\big|\dot\vp_\de\big|\,d\tilde{\mu_\de}<K_1(N_0)+K_2(N_0) +2|\de|^{-\frac{s}{2}},
\end{equation*}
and the statement follows.
\end{proof}

\begin{prop}\label{prop:int}
For every $\alpha\in(0,\pi]$, $\ve>0$ there exist $R\greq1$ and $\eta>0$ such that
   $$|\delta|^{1-\frac12d(\de)}\int_{\mc J_{\de_0}\sms N_{R}}\Big|\frac{\dot\vp_\de}{\vp_\de}\Big|\,d\tilde{\mu_{\de}}<\ve,$$
where $0<|\de|<\eta$ and $\alpha=\arg\de$.
\end{prop}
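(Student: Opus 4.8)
The strategy is to use that, once $N_R$ is deleted, the factor $1/\vp_\de$ in the integrand is, on each cylinder $\mc C^0_{\de,n}$, of order $|\la_\de|^{n/2}$, and that the geometric sums over the cylinders meeting $\mc J_\de\sms\mc N_{\de,R}$ — which reach only up to the scale $R\sqrt{|\de|}$ at which the cylinders enter $\mc N_{\de,R}$ — produce, after multiplication by $|\de|^{1-\frac12 d(\de)}$, a quantity that is a negative power of $R$ times a bounded factor, hence can be made $<\ve$ by choosing $R$ large. By the symmetry $\de\mapsto\bar\de$ it suffices to treat $\alpha\in(0,\pi]$; fix such $\alpha$ and $\ve>0$, and let $K>1$ be a constant serving simultaneously in Lemmas \ref{lem:mm}, \ref{lem:4^n} and Corollary \ref{cor:K}. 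Since $f_\de^{-1}(\{-p_\de\})=\{\pm b_\de\}\subset\mc N_{\de,R}$, the partition of $\mc J_\de$ into the cylinders $\mc C^0_{\de,n}$ gives $\mc J_{\de_0}\sms N_R\subseteq C^0_0\cup\bigcup_{n\greq1}(C^0_n\sms N_R)$. For $\de$ small (depending on $R$), Lemma \ref{lem:mm}(1) gives $|z|>K^{-1}$ on $\mc C^0_{\de,0}$, so $\mc C^0_{\de,0}\cap\mc N_{\de,R}=\emptyset$; and, by Lemma \ref{lem:mm}(2) together with $|\re z|\leeq|z|$, every cylinder $\mc C^0_{\de,n}$ with $n>N_\de$ lies inside $\mc N_{\de,R}$, where $N_\de$ is the smallest such integer and satisfies $N_\de=O(\log(1/|\de|))$ and $|\la_\de|^{N_\de}\leeq|\la_\de|K^2/(R^2|\de|)$. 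Hence $\bigcup_{n\greq1}(C^0_n\sms N_R)\subseteq\bigcup_{n=1}^{N_\de}C^0_n$. On $C^0_0$ we have $|\vp_\de|>K^{-1}$, so $|\dot\vp_\de/\vp_\de|<K|\dot\vp_\de|$, and
\begin{equation*}
|\de|^{1-\frac12 d(\de)}\int_{C^0_0}\Big|\frac{\dot\vp_\de}{\vp_\de}\Big|\,d\tilde\mu_\de\leeq K|\de|^{1-\frac12 d(\de)}\int_{\mc J_{\de_0}}|\dot\vp_\de|\,d\tilde\mu_\de\xrightarrow[\de\to0]{}0
\end{equation*}
by Proposition \ref{prop:|phi|} (applied with $s=1/4$, using $1-\frac12 d(\de)\greq 1/4$ for $\de$ small).

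For $1\leeq n\leeq N_\de$ and $z=\vp_\de(s)\in\mc C^0_{\de,n}$ apply formula (\ref{eq:sumas}) with $m=n$. By Corollary \ref{cor:K} (with $i=0$, $m=n$) one has $|(f_\de^j)'(z)|>K^{-1}|\la_\de|^{j-1-n/2}$ for $1\leeq j\leeq n$, so the finite sum in (\ref{eq:sumas}) is bounded in modulus by a constant times $|\la_\de|^{n/2}$, while the tail is bounded by a constant times $|\la_\de|^{1-n/2}\,|\dot\vp_\de(T^n(s))|$; since also $|z|>K^{-1}|\la_\de|^{-n/2}$ (Lemma \ref{lem:mm}(1)), dividing by $|\vp_\de(s)|=|z|$ yields
\begin{equation*}
\Big|\frac{\dot\vp_\de(s)}{\vp_\de(s)}\Big|\leeq C_1|\la_\de|^{n}+C_2\,\big|\dot\vp_\de(T^n(s))\big|
\end{equation*}
with constants $C_1,C_2$ depending only on $\alpha$. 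Integrating over $C^0_n$ (larger than $C^0_n\sms N_R$), using $\mu_\de(\mc C^0_{\de,n})\leeq C_3|\la_\de|^{-\frac12 d(\de)n}$ (conformality and bounded distortion, cf.\ Step 6 in the proof of Proposition \ref{prop:|phi|}) for the first term and the $T$-invariance of $\tilde\mu_\de$, which gives $\int_{C^0_n}|\dot\vp_\de\circ T^n|\,d\tilde\mu_\de\leeq\int_{\mc J_{\de_0}}|\dot\vp_\de|\,d\tilde\mu_\de$, for the second, and then summing over $1\leeq n\leeq N_\de$, we obtain
\begin{equation*}
\int_{\mc J_{\de_0}\sms N_R}\Big|\frac{\dot\vp_\de}{\vp_\de}\Big|\,d\tilde\mu_\de\leeq o(1)+C_4\sum_{n=1}^{N_\de}|\la_\de|^{(1-\frac12 d(\de))n}+C_2\,N_\de\int_{\mc J_{\de_0}}|\dot\vp_\de|\,d\tilde\mu_\de,
\end{equation*}
where the $o(1)$ absorbs the $C^0_0$ contribution above.

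Since $|\la_\de|^{1-\frac12 d(\de)}\to|\la_0|^{1/2}=2$, the geometric sum is at most a constant multiple of $|\la_\de|^{(1-\frac12 d(\de))N_\de}\leeq(|\la_\de|K^2/(R^2|\de|))^{1-\frac12 d(\de)}$; multiplying by $|\de|^{1-\frac12 d(\de)}$ the powers of $|\de|$ cancel and this term becomes at most a constant times $(|\la_\de|K^2/R^2)^{1-\frac12 d(\de)}$, which (as $\de\to0$ with $R$ fixed) tends to $2K/R$, hence is $<\ve/3$ once $R$ is chosen large and then $\de$ is small. For the last term, $|\de|^{1-\frac12 d(\de)}\leeq|\de|^{1/4}$ and $N_\de=O(\log(1/|\de|))$ for $\de$ small, while Proposition \ref{prop:|phi|} with $s=1/8$ gives $\int_{\mc J_{\de_0}}|\dot\vp_\de|\,d\tilde\mu_\de=o(|\de|^{-1/8})$, so this term is $O(\log(1/|\de|)\,|\de|^{1/8})\to0$ and is $<\ve/3$ for $\de$ small; together with the $o(1)$ this proves the claim after choosing $\eta$ (depending on $\alpha,\ve,R$, hence on $\alpha,\ve$). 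The main obstacle is precisely this final bookkeeping: the truncation level $N_\de$ must be placed exactly at the scale at which the cylinders enter $\mc N_{\de,R}$, so that $|\de|^{1-\frac12 d(\de)}\cdot|\la_\de|^{(1-\frac12 d(\de))N_\de}$ collapses to a clean negative power of $R$ — this cancellation of the $|\de|$-powers is what converts "take $R$ large" into the gain $\ve$ — and one must check that the logarithmic dependence of $N_\de$ on $R$ does not spoil the estimate of the term involving $\int_{\mc J_{\de_0}}|\dot\vp_\de|\,d\tilde\mu_\de$.
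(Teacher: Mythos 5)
Your proof is correct and follows essentially the same route as the paper's: reduce to the cylinder range $1\leeq n\leeq N_\de$ with $N_\de\asymp\log_{|\la_\de|}(K^2/(R^2|\de|))$, split the formula~(\ref{eq:sumas}) into the finite sum (which after dividing by $|z|$ contributes a geometric sum in $|\la_\de|^{(1-\frac12 d(\de))n}$ collapsing to $O(1/R)$ after multiplication by $|\de|^{1-\frac12 d(\de)}$) and the tail (which contributes $O(N_\de)\int|\dot\vp_\de|\,d\tilde\mu_\de$, killed by Proposition~\ref{prop:|phi|}). The only cosmetic difference is that you derive the derivative lower bounds uniformly from Corollary~\ref{cor:K} and integrate over all of $\mc J_{\de_0}$ in the tail estimate, whereas the paper mixes Lemmas~\ref{lem:4^n} and~\ref{lem:mm} and restricts the tail integral to $M^0_1$; these are equivalent.
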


\begin{proof}
We conclude from Proposition \ref{prop:|phi|} that it is enough to estimate the integral over the set $M^0_1\sms N_{R}$.

Fix $\alpha\in(0,\pi]$ and $\ve>0$.
If $\vp_\de(s)=z\in\mc C^{0}_{\de,n}$, then formula (\ref{eq:sumas}) for $m=n$ leads to
%\begin{equation*}\label{eq:sc}
%\dot\vp_\delta(s)=-\frac{1}{f_\delta'(\vp_\delta(s))}\bigg(1+\sum_{k=1}^{n-1}
%\frac{1}{(f_\delta^{k})'(f_\de(\vp_\delta(s)))}\bigg)+\frac{\dot\vp_\delta(T^{n}(s))}{(f_\delta^{n})'(\vp_\delta(s))}.
%\end{equation*}
\begin{equation*}\label{eq:sc1}
\frac{\dot\vp_\delta(s)}{\vp_\de(s)}=-\frac{1}{z\cdot f_\delta'(z)}\bigg(1+\sum_{k=1}^{n-1}
\frac{1}{(f_\delta^{k})'(f_\de(z))}\bigg)+\frac{\dot\vp_\delta(T^{n}(s))}{\vp_\delta(s)\cdot(f_\delta^{n})'(\vp_\delta(s))}.
\end{equation*}
where $\alpha=\arg\de$.

We have $|z|>R\sqrt{|\de|}$ for some $R$ large enough, on the other hand Lemma \ref{lem:mm} gives us $|z|<K_1|\la_\de|^{-\frac{1}{2}n}$. So we will consider $1\leeq n\leeq\log_{|\lambda_\de|}\big(\frac{K_1^2}{R^2|\de|}\big)$.

Since $f_\de(z)\in\mc C_{\de,n-1}^{-2}$, using Lemma \ref{lem:4^n} and again Lemma \ref{lem:mm}, we get
\begin{equation*}
   |z\cdot (f_\delta^{n})'(z)|=|z\cdot f_\de'(z)\cdot(f_\delta^{n-1})'(f_\de(z))|>K_2|\la_\de|^{-n}|\la_\de|^{n-1}>K_2.
\end{equation*}
So, the above inequality, and the fact that $T^n(C^{0}_n)=M^0_1$ lead to
\begin{equation*}
   \sum_{n=1}^{\log_{|\lambda_\de|}\big(\frac{K_1^2}{R^2|\de|}\big)}\int_{C^{0}_n} \bigg|\frac{\dot\vp_\delta(T^{n})}{\vp_\de\cdot(f_\delta^{n})'(\vp_\delta)}\bigg|\,d\tilde{\mu_\de} < K_3\log_{|\lambda_\de|}\Big(\frac{K_1^2}{R^2|\de|}\Big)\int_{M^0_1} \big|{\dot\vp_\delta}\big|\,d\tilde{\mu_\de}.
\end{equation*}
Because $1-d(\de)/2$ is close to $1/2$, Proposition \ref{prop:|phi|} gives us
\begin{equation*}
   \lim_{\de\rightarrow0}|\delta|^{1-\frac12d(\de)}K_3\log_{|\lambda_\de|}\Big(\frac{K_1^2}{R^2|\de|}\Big)\int_{M^0_1} \big|{\dot\vp_\delta}\big|\,d\tilde{\mu_\de}=0.
\end{equation*}

Since $f_\de(z)\in\mc C_{\de,n-1}^{-2}$, using Lemma \ref{lem:4^n}, next Lemma \ref{lem:mm}, and the fact that $\mu_\de(\mc C^{0}_{\de,n})\asymp\omega_\de(\mc C^{0}_{\de,n})\asymp(\diam(\mc C^{0}_{\de,n}))^{d(\de)}$, we obtain
\begin{multline*}
  \sum_{n=1}^{\log_{|\lambda_\de|}\big(\frac{K_1^2}{R^2|\de|}\big)}\int_{\mc C^{0}_{\de,n}} \bigg|\frac{1}{z\cdot f_\delta'(z)}\bigg(1+\sum_{k=1}^{n-1}
\frac{1}{(f_\delta^{k})'(f_\de(z))}\bigg)\bigg|\,d{\mu_\de}\\
<K_4\sum_{n=1}^{\log_{|\lambda_\de|}\big(\frac{K_1^2}{R^2|\de|}\big)}\int_{\mc C^{0}_{\de,n}} \frac{1}{|z|^2}\,d{\mu_\de} <K_5\sum_{n=1}^{\log_{|\lambda_\de|}\big(\frac{K_1^2}{R^2|\de|}\big)}|\lambda_\de|^{(1-\frac12d(\de))n}.
\end{multline*}
The rightmost sum can be estimated by the integral:
$$ K_6\int_{1}^{\log_{|\lambda_\de|}\big(\frac{K_1^2}{R^2|\de|}\big)}|\lambda_\de|^{(1-\frac12d(\de))x}dx< K_7\Big(\frac{K_1^2}{R^2|\de|}\Big)^{1-\frac12d(\de)}< \ve |\delta|^{-1+\frac12d(\de)}.$$
The last inequality holds for $R$ large enough, since $K_1$ and $K_7$ does not depend on $R$. Thus the assertion follows.
\end{proof}

\section{Integral over $N_R$ and Proof of the main Theorem}\label{sec:int3}

Now, using Proposition \ref{prop:intJR}, we estimate integral of (\ref{eq:f1}) over $N_{R}$, which has decisive influence on (\ref{eq:integral}). Next we will prove Theorem \ref{thm:minus2}.

\begin{prop}\label{prop:intphiJR}
For every $\alpha\in(0,\pi]$ and $R\greq1$ we have
   $$\lim_{\de\rightarrow0}|\de|^{1-\frac12d(\de)}\int_{N_{R}}\re\Big(v\frac{\dot\vp_\de}{\varphi_\de}\Big)d\tilde{\mu_{\de}}= \frac13\frac2\pi I_{\alpha,R},$$
where $\alpha=\arg\de$ and $v=e^{i\alpha}$.
\end{prop}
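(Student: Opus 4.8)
The strategy mirrors the proof of Proposition~\ref{prop:int}, but now we extract the leading term of (\ref{eq:sumas}) instead of merely bounding it. For $z=\vp_\de(s)\in\mc C^{0}_{\de,n}$, taking $m=n$ in (\ref{eq:sumas}) and using $f_\de'(z)=2z$ gives
\begin{equation*}
\frac{\dot\vp_\de(s)}{\vp_\de(s)}=-\frac{1}{2z^2}A_\de(z)+\frac{\dot\vp_\de(T^n(s))}{\vp_\de(s)\,(f_\de^n)'(\vp_\de(s))},\qquad A_\de(z):=1+\sum_{k=1}^{n-1}\frac{1}{(f_\de^k)'(f_\de(z))}.
\end{equation*}
The first observation is that on $N_R$ the depth $n$ is forced to be large: if $z\in\mc C^{0}_{\de,n}\cap\mc N_{\de,R}$, then Lemma~\ref{lem:mm}(\ref{lit:mm1}) gives $K^{-1}|\la_\de|^{-n/2}<|z|$, while $|\re z|\leeq R\sqrt{|\de|}$ together with Lemma~\ref{lem:sqrt|de|} gives $|z|\leeq\sqrt{R^2+4}\,\sqrt{|\de|}$; hence $n\greq n_\de(R)$ with $n_\de(R)\to\infty$ as $\de\to0$. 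Moreover, by Proposition~\ref{prop:Hmetric} the rescaled sets $\mc N^{\dt}_{\de,R}$ converge to $H^{\dt}_{\alpha,R}$, which is bounded away from $0$, so $|z|\asymp\sqrt{|\de|}$ on $\mc N_{\de,R}$ for $\de$ small.

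\emph{The core estimate} is that $A_\de(z)\to\tfrac23$ uniformly over $z\in\mc N_{\de,R}$ as $\de\to0$. Since $n\greq n_\de(R)\to\infty$, the point $f_\de(z)\in\mc C^{-2}_{\de,n-1}$ lies within $O(|\la_\de|^{-(n-1)})$ of $-p_\de$, and for any fixed $k$ the orbit segment $f_\de(z),\dots,f_\de^{k}(z)$ lies in deep cylinders $\mc C^{-2}_{\de,n-1},\mc C^{+2}_{\de,n-2},\dots$, hence shadows $-p_\de,p_\de,\dots,p_\de$ once $\de$ is small. Fix $L$ and split the sum in $A_\de(z)$ at $k=L$. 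For $k\leeq L$ one has $(f_\de^k)'(f_\de(z))=f_\de'(f_\de(z))\prod_{j=1}^{k-1}f_\de'(f_\de^{j+1}(z))\longrightarrow f_0'(-2)\,(f_0^{k-1})'(2)=-4^k$ uniformly, so $\sum_{k=1}^{L}(f_\de^k)'(f_\de(z))^{-1}\to\sum_{k=1}^{L}(-4)^{-k}$. For $L<k\leeq n-1$, Lemma~\ref{lem:4^n} yields $|(f_\de^k)'(f_\de(z))|\greq K^{-1}|\la_\de|^k$, so the remaining part of the sum is $O(2^{-L})$ uniformly. Letting $\de\to0$ and then $L\to\infty$, and using $\sum_{k\greq1}(-4)^{-k}=-\tfrac13$, the claim follows. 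This is the step I expect to be the main obstacle: it requires the ``$n$ large on $N_R$'' fact combined with careful control of how the forward orbit of a point deep in $\mc C^{0}_{\de,n}$ shadows the orbit $-p_\de,p_\de,p_\de,\dots$, together with the geometric lower bound of Lemma~\ref{lem:4^n}.

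Next, the remainder term $\dot\vp_\de(T^n(s))/(\vp_\de(s)(f_\de^n)'(\vp_\de(s)))$ contributes nothing in the limit. Indeed $|z(f_\de^n)'(z)|=|2z^2(f_\de^{n-1})'(f_\de(z))|\asymp|z|^2|\la_\de|^{n-1}\asymp|\de|\,|\la_\de|^n$ on $\mc N_{\de,R}$ (using $|z|\asymp\sqrt{|\de|}$ and Lemma~\ref{lem:4^n} with $f_\de(z)\in\mc C^{-2}_{\de,n-1}$), and since $f_\de^n$ is injective on each cylinder $\mc C^{0\pm}_{\de,n}$ with image in $\mc J_\de$, $T$-invariance of $\tilde\mu_\de$ gives $\int_{C^{0}_n\cap N_R}|\dot\vp_\de(T^n)|\,d\tilde\mu_\de\lesssim\int_{\mc J_{\de_0}}|\dot\vp_\de|\,d\tilde\mu_\de$. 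Summing over $n\greq n_\de(R)$ and using $|\la_\de|^{-n_\de(R)}\asymp|\de|$,
\begin{equation*}
\int_{N_R}\Big|\frac{\dot\vp_\de(T^n)}{\vp_\de\,(f_\de^n)'(\vp_\de)}\Big|\,d\tilde\mu_\de\lesssim\Big(\sum_{n\greq n_\de(R)}\frac{1}{|\de|\,|\la_\de|^n}\Big)\int_{\mc J_{\de_0}}|\dot\vp_\de|\,d\tilde\mu_\de\lesssim R^2\int_{\mc J_{\de_0}}|\dot\vp_\de|\,d\tilde\mu_\de,
\end{equation*}
which, multiplied by $|\de|^{1-\frac12 d(\de)}$, tends to $0$ by Proposition~\ref{prop:|phi|} (applied with any fixed $s<1/2$, as $1-\tfrac12 d(\de)\to\tfrac12$).

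Finally, write $A_\de(z)=\tfrac23+\ve_\de(z)$ with $\sup_{\mc N_{\de,R}}|\ve_\de|\to0$. On $N_R$,
\begin{equation*}
\re\Big(v\,\frac{\dot\vp_\de}{\vp_\de}\Big)=\frac13\re\Big(-\frac{v}{\vp_\de^2}\Big)-\re\Big(\frac{v\,\ve_\de(\vp_\de)}{2\vp_\de^2}\Big)+\text{(remainder)}.
\end{equation*}
Since $\int_{N_R}F(\vp_\de)\,d\tilde\mu_\de=\int_{\mc N_{\de,R}}F\,d\mu_\de$, the first term multiplied by $|\de|^{1-\frac12 d(\de)}$ converges to $\tfrac13\cdot\tfrac2\pi I_{\alpha,R}$ by Proposition~\ref{prop:intJR}; the $\ve_\de$-term is bounded by $\tfrac12\sup|\ve_\de|$ times $|\de|^{1-\frac12 d(\de)}\int_{\mc N_{\de,R}}|z|^{-2}\,d\mu_\de$, hence is $O(\sup|\ve_\de|)\to0$ by Corollary~\ref{cor:IntJR2}; and the remainder was just handled. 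Adding these up gives the assertion. The case $\alpha=\pi$ is entirely analogous (and simpler, since $\mc N_{\de,R}\subset\R$).
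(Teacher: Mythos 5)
Your proposal is correct in substance and reaches the right limit, but it takes a genuinely different route from the paper's. The paper fixes a \emph{single} cut-off $m=n_\de+1$ that works uniformly over $\mc N_{\de,R}$: it chooses $r_\ve$ so that $\Phi_\de^{-1}$ has distortion $1+O(\ve)$ on $B(0,r_\ve)$, takes $n_\de$ maximal with $\la_\de^{n_\de}\Phi_\de(-f_\de(z))\in B(0,r_\ve)$ for all $z\in\mc N_{\de,R}$, and then estimates the whole block $\sum_{k=1}^{n_\de}(f_\de^k)'(f_\de(z))^{-1}$ at once by comparing each $(f_\de^k)'(-f_\de(z))$ to $\la_\de^k$ via the conjugation $\Phi_\de$. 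You instead cut at the cylinder depth $m=n$ for $z\in\mc C^0_{\de,n}$, then split the resulting sum $A_\de(z)$ at a fixed $L$, handling $k\leeq L$ by convergence of the orbit segment and $k>L$ by the geometric bound of Lemma~\ref{lem:4^n}. Both of these isolate the constant $1-\tfrac13=\tfrac23$; the paper's version gives a quantitative $2\ve$-bound with a single application of the linearizing coordinate, whereas yours is more elementary and closer in spirit to the proof of Proposition~\ref{prop:int}. Your tail estimate is also organized differently (summed over depths $n\greq n_\de(R)$ rather than a single application of $T^{n_\de+1}$-invariance and bounded distortion), but both collapse to Proposition~\ref{prop:|phi|}. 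The only flaw I see is a bracketing slip: from $(f_\de^k)'(f_\de(z))\to -4^k$ you should get $\sum_{k\greq1}\bigl(-4^k\bigr)^{-1}=-\sum 4^{-k}=-\tfrac13$, not ``$\sum_{k\geq1}(-4)^{-k}=-\tfrac13$'' (the latter equals $-1/5$). The conclusion $A_\de\to\tfrac23$ you draw is nevertheless the correct one, since you plainly intended $-4^{-k}$; it's worth fixing the notation so the geometric series evaluates as claimed.
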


\begin{proof}
Fix $\alpha\in(0,\pi]$, $R\greq1$ and $\ve_1>0$. Let $\ve=\ve_1/K_{\alpha,R}$ where $K_{\alpha,R}$ is the constant  from Corollary \ref{cor:IntJR2}.

\emph{Step 1.}
There exists $0<r_\ve\leeq r_\textrm{z}$ such that
\begin{equation}\label{eq:frac}
   \bigg|\frac{(\Phi_\de^{-1})'(z)}{(\Phi_\de^{-1})'(w)}-1\bigg|<\ve,
\end{equation}
for $z,w\in B(0,r_\ve)$ and $\de\in B(0,r_\vartriangle)$.

If $z\in\mc N_{\de,R}$, then $-f_\de(z)$ is close to $p_\de$.
Let $n_\de$ be the largest number such that $\lambda_\de^{n_\de}\Phi_\de(-f_\de(z))\subset B(0,r_\ve)$ for all points $z\in\mc N_{\de,R}$, where $\alpha=\arg\de$.
Since $\diam(\mc N_{\de,R})\rightarrow0$ if $\de\rightarrow0$, we conclude that $n_\de\rightarrow\infty$.

Formula (\ref{eq:sumas}) for $m=n_\de+1$ leads to
\begin{equation}\label{eq:s}
   \dot\vp_\delta(s)=-\frac{1}{f_\delta'(\vp_\delta(s))}\bigg(1+\sum_{k=1}^{n_\de}
   \frac{1}{(f_\delta^{k})'(f_\de(\vp_\delta(s)))}\bigg)+\frac{\dot\vp_\delta(T^{n_\de+1}(s))}{(f_\delta^{n_\de+1})'(\vp_\delta(s))}.
\end{equation}

Now we estimate finite sum from the above formula.
If $\Phi(z)\in B(0,r_\ve/\lambda_\de^{n_\de})$ and $1\leeq k\leeq n_\de$, then we have (cf. (\ref{eq:pfp}))
$$f_\de^k(z)=\Phi_\de^{-1}(\lambda_\de^k\Phi_\de(z)),$$
and
$$(f_\de^k)'(z)=(\Phi_\de^{-1})'(\lambda_\de^k\Phi_\de(z))\cdot\lambda_\de^k\cdot\Phi_\de'(z).$$
Therefore, (\ref{eq:frac}) gives us
\begin{equation}\label{eq:la}
   \bigg|\frac{\lambda_\de^{k}}{(f_\de^{k})'(z)}-1\bigg|<\ve \quad\textrm{and}\quad\bigg|\frac{1}{(f_\de^{k})'(z)}-\frac{1}{\lambda_\de^{k}}\bigg|<\frac{\ve}{|\lambda_\de^{k}|}.
\end{equation}

Since $n_\de\rightarrow\infty$ and $\lambda_\de\rightarrow4$, we get
   $$\bigg|\sum_{k=1}^{n_\de}\frac{1}{\lambda_\de^{k}}-\frac13\bigg|<\ve.$$
Thus, (\ref{eq:la}) and the above estimate, lead to
   $$\bigg|\sum_{k=1}^{n_\de}\frac{1}{(f_\de^{k})'(z)}-\frac{1}{3}\bigg|<\ve+\sum_{k=1}^{n_\de}\frac{\ve}{|\lambda_\de^{k}|}<2\ve.$$

If $z\in\mc N_{\de,R}$, then $\Phi(-f_\de(z))\in B(0,r_\ve/\lambda_\de^{n_\de})$. Moreover $(f_\delta^{k})'(-f_\de(z))=-(f_\delta^{k})'(f_\de(z))$, so we have
\begin{equation}\label{eq:+1/3}
   \bigg|\sum_{k=1}^{n_\de}\frac{1}{(f_\de^{k})'(f_\de(z))}+\frac{1}{3}\bigg| <2\ve.
\end{equation}

\emph{Step 2.} Now we estimate integral of a "tail" from the formula (\ref{eq:s}).
Distortion of $f_\de^{n_\de+1}$ on $\mc N_{\de,R}$ is bounded by a constant depending on $\alpha$ and $R$. Since $\diam(\mc N_{\de,R})<K_1\sqrt{|\de|}$ and $\diam(f_\de^{n_\de+1}(\mc N_{\de,R}))>K_2\ve$, we get
   $$\big|(f_\de^{n_\de+1})'(z)\big|>K_3(\alpha,R,\ve)\frac{1}{\sqrt{|\de|}},$$
for $z\in \mc N_{\de,R}$. On the other hand $|z|>K_4\sqrt{|\de|}$ (cf. Proposition \ref{prop:Hmetric}), so if $\vp_\delta(s)=z\in \mc N_{\de,R}$, we obtain
   $$\bigg|\re\Big(\frac{v}{\varphi_\de(s)}\frac{\dot\vp_\delta(T^{n_\de+1}(s))}{(f_\delta^{n_\de+1})'(\vp_\delta(s))}\Big)\bigg| <K_5(\alpha,R,\ve)\big|\dot\vp_\delta(T^{n_\de+1}(s))\big|.$$
Therefore, Proposition \ref{prop:|phi|} leads to
\begin{equation}\label{eq:tail}
   |\de|^{1-\frac12d(\de)}\int_{N_{R}} \bigg|\re\Big(\frac{v}{\varphi_\de(s)}\frac{\dot\vp_\delta(T^{n_\de+1}(s))}{(f_\delta^{n_\de+1})'(\vp_\delta(s))}\Big) \bigg| d\tilde{\mu_{\de}}\rightarrow0.
\end{equation}

\emph{Step 3.}
We have $f'_\de(z)=2z$, so (\ref{eq:+1/3}) leads to
   $$\bigg|\re\bigg(\frac{v}{z}\cdot\frac{-1}{f_\delta'(z)}\bigg(1+\sum_{k=1}^{n_\de}
\frac{1}{(f_\delta^{k})'(f_\de(z))}\bigg)\bigg)-\re\bigg(-\frac{v}{2z^2}\frac23\bigg)\bigg|<\frac{\ve}{|z|^2}.$$
Next, we integrate both sides over $N_R$ (where $z=\vp_\de(s)$) with respect to $\tilde{\mu_\de}$, and multiply by $|\de|^{1-\frac12d(\de)}$.
So, formula (\ref{eq:s}) combined with (\ref{eq:tail}), Proposition \ref{prop:intJR} and Corollary \ref{cor:IntJR2}, gives us
   $$\bigg|\lim_{\de\rightarrow0}|\de|^{1-\frac12d(\de)}\int_{N_{R}}\re\Big(v\frac{\dot\vp_\de}{\varphi_\de}\Big)d\tilde{\mu_{\de}}- \frac13\frac2\pi I_{\alpha,R}\bigg|<\ve K_{\alpha,R}=\ve_1.$$
Because $\ve_1$ was arbitrary, the assertion follows.
\end{proof}

\begin{proof}[\textbf{Proof of the Theorem \ref{thm:minus2}}]
Fix $\alpha\in(0,\pi]$. We have (cf. (\ref{eq:I}), (\ref{eq:II}))
   $$\lim_{R\rightarrow\infty}I_{\alpha,R}=-{\sqrt6}\cos\alpha+ \frac{\sqrt6}{2}\sqrt{\sin\alpha}\int_{\alpha}^{\pi} \sqrt{\sin \beta}\,d\beta:= I_\alpha.$$
Let $\alpha=\arg\de$ and let $v=e^{i\alpha}$. Then, Propositions \ref{prop:intphiJR} and \ref{prop:int} lead to
\begin{equation}\label{eq:f2}
   \lim_{\de\rightarrow0}|\de|^{1-\frac{1}{2}d(\de)}\int_{\mc J_{\de_0}}\re\Big(v\frac{\dot\vp_{\de}}{\varphi_{\de}}\Big)d\tilde{{\mu}_{\de}}= \frac13\frac2\pi I_{\alpha}.
\end{equation}

Because $f_0$ is conjugated to $V$, the entropy $h_{\mu_0}(f_0)$ is equal to $\log2$. Next, we conclude from \cite[Theorem 11.4.1]{PU} that $1=\textrm{HD}(\mu_0)=h_{\mu_0}(f_0)/\chi_{\mu_0}(f_0)$ (where $\chi_{\mu_0}(f_0)$ is the Lyapunov exponent). Therefore $\chi_{\mu_0}(f_0)=\log2$, hence
   $$\int_{\mc J_0}\log|f_0'|d\mu_0=4\chi_{\mu_0}(f_0)=4\log2.$$
Since $f_\de'(\vp_\de)=2\vp_\de$, Propositions \ref{prop:uniform}, \ref{prop:mu}, give us
   $$\lim_{\de\rightarrow0}\int_{\mc J_{\de_0}}\log|f_\de'(\vp_\de)|d\tilde{\mu}_{\de}=4\log2.$$

Thus, formula (\ref{eq:d}) combined with (\ref{eq:f1}), (\ref{eq:f2}) and the above limit, leads to
\begin{equation}\label{eq:f3}
\lim_{\de\rightarrow0}|\de|^{1-\frac{1}{2}d(\de)}\cdot d_v'(\de)= -\frac{1}{6\pi\log 2} I_{\alpha},
\end{equation}
where $\alpha=\arg\de$ and let $v=e^{i\alpha}$.

In order to finish the proof, we have to replace $1-d(\de)/2$ by $1/2$ in the exponent. Because $I_\alpha$ is bounded, the above gives us
$$|d_v'(tv)|< K_1\cdot t^{\frac12d(tv)-1}.$$
Thus we have
\begin{multline*}
\big|d(tv)-1\big|\leeq\int_0^t\big|d_v'(sv)\big|\,ds< K_1\int_0^t s^{\frac12d(sv)-1}ds< K_1\int_0^t s^{-2/3}ds
\\= 3K_{1}\cdot t^{1/3}< t^{1/4},
\end{multline*}
and then
$$1\greq t^{|d(tv)-1|}\greq t^{t^{1/4}}=e^{t^{1/4}\log t}\rightarrow e^0=1.$$
So, we obtain
$$\lim_{t\rightarrow0^+}\frac{t^{1-\frac12d(tv)}}{t^{\frac12}}=\lim_{t\rightarrow0^+}t^{\frac{1}{2}(1-d(tv))}=1,$$
and finally (\ref{eq:f3}) and (\ref{eq:omega}) lead to
\begin{equation*}
\lim_{\de\rightarrow0}|\de|^{\frac12}\, d_v'(\de)= -\frac{1}{6\pi\log 2} I_{\alpha}=\Omega_{-2}(\alpha),
\end{equation*}
where $\alpha=\arg\de$ and let $v=e^{i\alpha}$.
\end{proof}

%$$|(f^{-n-1}_\de)'|^{d(\de)}=|(f^{-n-1}_\de)'|^{d(\de)-1}\cdot|(f^{-n-1}_\de)'|,\quad |(f^{-n-1}_\de)'|\asymp\sqrt{|\de|}$$
%$$\Big(\frac{|(f^{-n-1}_\de)'|}{\sqrt{|\de|}}\Big)^{d(\de)-1}\rightarrow1, \quad |(f^{-n-1}_\de)'|^{d(\de)-1}\approx|\de|^{\frac12(d(\de)-1)}$$

%$$\frac{\tilde{\mu}^\updownarrow_{\delta,R}(A)}{|\de|^{\frac12(d(\de)-1)}}\approx \frac{1}{\sqrt{|\de|}}\int_{f_\de^{n+1}(|\de|A)}|(f^{-n-1}_\de)'|d\mu_\de\rightarrow\tilde{\mu}^\updownarrow_{0,R}(A)$$

\end{document}